\newtheorem{theo}{Theorem}[section]
\newtheorem{prop}{Proposition}[section]
\newtheorem{lemma}{Lemma}[section]
\newtheorem{cor}{Corollary}[section]
\theoremstyle{remark}
\newtheorem{rem}{Remark} [section]
\newcommand{\R}{{\mathbb{R}}}
\newcommand{\N}{{\mathbb{N}}}
\newcommand{\C}{{\mathbb{C}}}
\newcommand{\Z}{{\mathbb{Z}}}
\newcommand{\tore}{{\mathbb{T}}}
\newcommand{\La}{\Lambda} 
\newcommand{\la}{\lambda} 
\newcommand{\al}{\alpha} 
\newcommand{\be}{\beta} 
\newcommand{\te}{\theta} 
\newcommand{\ga}{\gamma}
\newcommand{\Ga}{\Gamma}
\newcommand{\om}{\omega} 
\newcommand{\Si}{\Sigma} 
\newcommand{\ta}{\tau} 
\newcommand{\De}{\Delta}
\newcommand{\si}{\sigma} 
\newcommand{\Ci}{{\mathcal{C}}^{\infty}}
\newcommand{\Lie}{{\mathfrak{g}}} % alg{\`e}bre de Lie
\newcommand{\Ad}{\operatorname{Ad}} % action adjointe
\newcommand{\id}{\operatorname{id}} % identite 
\newcommand{\preq}{{\mathcal{P}}}  % fibr{\'e} prequantifiant de l'espace des connection
\newcommand{\module}{{\mathcal{M}}}   % Espaces de modules
\newcommand{\planar}{{\mathcal{P}}}
\newcommand{\mreg}{{\mathcal{M}} ^{\operatorname{reg}}}  %partie r{\'e}guli{\`e}re
\newcommand{\Hom}{\operatorname{Hom}}
\newcommand{\dLie}{{\mathcal{L}}} % d{\'e}riv{\'e}e de Lie. 
\newcommand{\Hilbert}{{\mathcal{H}}}  % espace de Hilbert 
\newcommand{\Ol}{{\mathcal{O}}}   % faisceau tautologique7
\newcommand{\Proj}{{\mathbb{P}}}   % Projectif
\newcommand{\Sp}{\operatorname{Sp}}   % spectre
\newcommand{\CL}{\operatorname{CG}} % Clebsch-Gordan
\newcommand{\Int}{\operatorname{Int}}  % interieur
\newcommand{\dcan}{\wedge ^{\operatorname{top},0} } % droite canonique
\newcommand{\dvol}{\wedge ^{\operatorname{top}} } % droite des
\newcommand{\Fourier}{{\mathcal{F}}} % Fourier integraux
\newcommand{\varparallel}{/\!\! /}  % quotient symplectique
\newcommand{\End}{\operatorname{End}}  %  Endomorphismes
\newcommand{\Ei}{E_{\operatorname{int}}}
\newcommand{\Eh}{E_{\operatorname{half}}}
\title{On the quantization of polygon spaces}
\author{L. Charles\footnote{Universit{\'e} Pierre et Marie Curie, UMR 7586 Institut de
  Math{\'e}matiques de Jussieu, Paris, F-75005 France.}}
\begin{document}
\maketitle

\begin{footnotesize}
  \noindent \textbf{Keywords :} Polygon space, Geometric quantization,
  Toeplitz operators, Lagrangian section, Symplectic reduction,
  $6j$-symbol, Canonical base\\
  \noindent \textbf{MS Classification :}
  47L80, %Algebras of specific types of operators (Toeplitz, integral, pseudodifferential, etc.)
  53D30, %Symplectic structures of moduli spaces,
  53D12, %Lagrangian submanifolds; Maslov index
  53D50, %Geometric quantization
  53D20, %Momentum maps; symplectic reduction
  81S10, %Geometry and quantization, symplectic methods
81S30, %Phase space methods including Wigner distributions, etc.
81R12, %Relations with integrable systems 
81Q20, %Semiclassical techniques including WKB and Maslov methods
\end{footnotesize}

\begin{abstract}
Moduli spaces of polygons have been studied since the nineties for
their topological and symplectic properties. Under generic assumptions,
these are symplectic manifolds with natural global action-angle
coordinates. This paper is concerned with the quantization of these manifolds
and of their action coordinates. 
Applying the geometric quantization procedure, one is lead to consider
invariant subspaces of a tensor product of irreducible
representations of $SU(2)$. These quantum spaces admit natural sets of
commuting observables. We prove that these operators form a
semi-classical integrable system, in the sense that they are Toeplitz operators
with principal symbol the square of the action coordinates.
As a consequence, the quantum spaces admit bases whose vectors
concentrate on the Lagrangian submanifolds of constant action. The
coefficients of the change of basis matrices can be estimated in terms of
geometric quantities.  We recover this way the already known asymptotics of the
classical $6j$-symbols. 
\end{abstract}

\bibliographystyle{plain}

\section{Introduction} 

Given an $n$-tuple $\ell = (\ell_1,
\ldots, \ell_n)$ of positive
numbers, the polygon space  $\module_\ell$ consists of the spatial
$n$-sided polygons with side lengths equal to the
$\ell_i$, up to isometries. When $\ell$ satisfies a generic
assumption, $\module_\ell$ is a compact K{\"a}hler  manifold. These moduli
spaces have been studied in
several papers since the nineties for their topology and symplectic
properties  \cite{KaMi}, \cite{HaKn}, \cite{Kl} and \cite{FaHaSc}. Among other
things, Kapovich and Millson discovered in \cite{KaMi} a remarkable action-angle
coordinate system.  Considering
the triangulation in figure \ref{fig:action_angle}, the actions
 are defined as the lengths of the internal edges and the
angles as the dihedral angles between the faces adjacent to the
internal edges. More generally, given any decomposition of a $n$-sided
polygon into triangles obtained by connecting the vertices, one defines
a canonical action-angle coordinate system. 

\begin{figure}
\begin{center}
\begin{picture}(0,0)%
\includegraphics{figure1.pstex}%
\end{picture}%
\setlength{\unitlength}{3355sp}%
\begingroup\makeatletter\ifx\SetFigFont\undefined%
\gdef\SetFigFont#1#2#3#4#5{%
  \reset@font\fontsize{#1}{#2pt}%
  \fontfamily{#3}\fontseries{#4}\fontshape{#5}%
  \selectfont}%
\fi\endgroup%
\begin{picture}(4287,2770)(526,-2219)
\put(1651,-211){\makebox(0,0)[lb]{\smash{{\SetFigFont{10}{12.0}{\rmdefault}{\mddefault}{\updefault}{\color[rgb]{0,0,0}$\la_1$}%
}}}}
\put(1201,239){\makebox(0,0)[lb]{\smash{{\SetFigFont{10}{12.0}{\rmdefault}{\mddefault}{\updefault}{\color[rgb]{0,0,0}$\ell_1$}%
}}}}
\put(1201,-961){\makebox(0,0)[lb]{\smash{{\SetFigFont{10}{12.0}{\rmdefault}{\mddefault}{\updefault}{\color[rgb]{0,0,0}$\te_1$}%
}}}}
\put(1801,-1711){\makebox(0,0)[lb]{\smash{{\SetFigFont{10}{12.0}{\rmdefault}{\mddefault}{\updefault}{\color[rgb]{0,0,0}$\te_2$}%
}}}}
\put(1876,-811){\makebox(0,0)[lb]{\smash{{\SetFigFont{10}{12.0}{\rmdefault}{\mddefault}{\updefault}{\color[rgb]{0,0,0}$\la_2$}%
}}}}
\put(3601,-1636){\makebox(0,0)[lb]{\smash{{\SetFigFont{10}{12.0}{\rmdefault}{\mddefault}{\updefault}{\color[rgb]{0,0,0}$\te_{n-3}$}%
}}}}
\put(976,-1561){\makebox(0,0)[lb]{\smash{{\SetFigFont{10}{12.0}{\rmdefault}{\mddefault}{\updefault}{\color[rgb]{0,0,0}$\ell_3$}%
}}}}
\put(3676,-586){\makebox(0,0)[lb]{\smash{{\SetFigFont{10}{12.0}{\rmdefault}{\mddefault}{\updefault}{\color[rgb]{0,0,0}$\la_{n-3}$}%
}}}}
\put(3901,-2161){\makebox(0,0)[lb]{\smash{{\SetFigFont{10}{12.0}{\rmdefault}{\mddefault}{\updefault}{\color[rgb]{0,0,0}$\ell_{n-2}$}%
}}}}
\put(1651,-2161){\makebox(0,0)[lb]{\smash{{\SetFigFont{10}{12.0}{\rmdefault}{\mddefault}{\updefault}{\color[rgb]{0,0,0}$\ell_4$}%
}}}}
\put(3901,-136){\makebox(0,0)[lb]{\smash{{\SetFigFont{10}{12.0}{\rmdefault}{\mddefault}{\updefault}{\color[rgb]{0,0,0}$\ell_n$}%
}}}}
\put(4501,-1561){\makebox(0,0)[lb]{\smash{{\SetFigFont{10}{12.0}{\rmdefault}{\mddefault}{\updefault}{\color[rgb]{0,0,0}$\ell_{n-1}$}%
}}}}
\put(526,-661){\makebox(0,0)[lb]{\smash{{\SetFigFont{10}{12.0}{\rmdefault}{\mddefault}{\updefault}{\color[rgb]{0,0,0}$\ell_2$}%
}}}}
\end{picture}%
\caption{Action-angle coordinates $(\la_i, \te_i)_{i =1, \ldots , n-3}$.} \label{fig:action_angle}
\end{center}
\end{figure}

The main subject of this paper is the quantum counterpart of these
coordinate systems. Assuming that the lengths $\ell_i$ are integral,
one may apply the geometric quantization procedure to the polygon
space $\module_\ell$. The quantum space is defined as the space of
holomorphic sections of a prequantization line bundle with base
$\module_\ell$. Let $V_m$ be the $(m+1)$-dimensional irreducible representation
of $SU(2)$ and consider the invariant subspace $\Hilbert_\ell$ of the tensor product of the $V_{\ell_i}$:
$$   \Hilbert _\ell := \bigl( V_{\ell_1 } \otimes V_{\ell_2}
\otimes \ldots \otimes V_{\ell_n} \bigr)^{SU(2)}.$$
Then the quantum space associated to $\module_\ell$ is isomorphic to
$\Hilbert_\ell$. If we replace the prequantum bundle by its $k$-th
power and twist it by a half-form bundle, we obtain a quantum space
isomorphic to $\Hilbert _{k\ell-1}$. 
The semi-classical limit is defined as the limit 
$k \rightarrow \infty$, the parameter $k$ corresponding to the inverse
of the Planck constant. 

The usual tools of microlocal analysis
have been introduced in the context of compact K{\"a}hler manifolds and
may be applied to the quantization of the polygon spaces. In
particular there exists a class of operators, called Toeplitz
operators, which plays a role similar to the class of
pseudo-differential operators with small parameter
(\cite{BoGu}, \cite{oim_bt}). Also relevant to this paper are the Lagrangian
sections (\cite{BPU}, \cite{oim_qm}). These are families of sections which
in the semiclassical limit concentrate on a Lagrangian submanifold in a
precise way. They are similar
to the usual Lagrangian distributions of microlocal analysis.  

Let us return to the vector space $\Hilbert_\ell$. Since the
Littlewood-Richardson coefficients of $SU(2)$ are $0$ or $1$, to each
bracketing of the product $V_{\ell_1} \otimes \ldots \otimes
V_{\ell_n}$ corresponds a decomposition of $\Hilbert_{\ell}$ into a direct sum of
lines. By using concrete irreducible
representations, we can even introduce well-defined bases of $\Hilbert_\ell$. 
In particular, the
classical $6j$-symbols are defined as the coefficients of the change of
basis matrix between basis coming from $((V_{\ell_1} \otimes V_{\ell_2}) \otimes V_{\ell_3})
  \otimes V_{\ell_4}$ and $(V_{\ell_1} \otimes
(V_{\ell_2} \otimes V_{\ell_3}))
  \otimes V_{\ell_4}.$  

For each way of placing  bracket, we will define a family $(H_{i,
  \ell})_{i =1, \ldots , n-3}$ of mutually commuting operators of
$\Hilbert _\ell$ whose joint eigenspaces are the summand of the
associated decomposition. 
Our main result says that the sequences $(H_{i,k\ell
  -1})_k$ are Toeplitz operators of $\module_\ell$, cf. theorem \ref{theo:main}. Further, their principal
symbols are the squares of the actions of a canonical
coordinate system defined as above. For instance, the parenthesising
$$ ((\ldots ((V_{\ell_1} \otimes V_{\ell_2} ) \otimes V_{\ell_3} )
\ldots ) \otimes V_{\ell_{n-1}}) \otimes V_{\ell_n}$$
  corresponds to the angle coordinates defined in figure \ref{fig:action_angle}. 

Families of
  pseudodifferential operators whose principal symbols form an
  integrable system have been the subject of many works, cf. \cite{Sa}
  and references therein. Their joint spectrum can be computed in
  terms of the geometry of the integrable system by the
  Bohr-Sommerfeld conditions and their joint eigenstates are Lagrangian
  functions. These results have been extended to semi-classical
  integrable systems of Toeplitz operators in \cite{oim_qm} and
  \cite{oim_hf} and can therefore be applied to the operators $(H_{i,k\ell
  -1})_k$. In this case, the Bohr-Sommerfeld conditions are not so interesting
because their joint spectrum can easily be computed explicitely. On the other hand, it is a non-trivial result that the joint
eigenspaces are generated by Lagrangian sections associated to the Lagrangian
submanifolds of constant action. We prove this and also compute the symbol
of these Lagrangian sections, cf. theorem \ref{theo:quasi_mode}.  

As a consequence, we deduce the surprising
formula of Roberts \cite{Ro} which relates the asymptotics of $6j$-symbols to
the geometry of the tetrahedron, cf. theorem
\ref{theo:asymptotism-6j}. As a matter of fact,  the scalar product of two Lagrangian sections can be estimated in
terms of their symbols when the associated Lagrangian
manifolds intersect transversally. 

Our main motivation to study the
quantization of polygon spaces, besides being a natural application
of our previous works, is the similarity with topological quantum
field theory (TQFT). In this case we consider instead of $\module_\ell$ the
moduli space of flat $SU(2)$-principal bundles on a surface $\Si$ with
prescribed holonomy on the boundary. This
moduli space admits natural Lagrangian fibrations associated to
each decomposition of $\Si$ into pairs of pants \cite{JeWe}. The
quantum space is the space of conformal blocks of $\Si$ and
its dimension is given by the famous Verlinde formula.
The manifestation of the Lagrangian fibration at the quantum level has
been considered in several papers. 
Jeffrey and
Weitsmann showed in \cite{JeWe} that the number of fibres satisfying a
Bohr-Sommerfeld condition is given by the Verlinde
formula. Taylor and Woodward conjectured in \cite{WoTa} that some bases of
the quantum space of the four-holed two-sphere consist of Lagrangian
sections. They deduced the asymptotics of the quantum
$6j$-symbols. Their heuristic argument was an inspiration
for our estimation of the classical $6j$-symbol.

 Also, in the TQFT context, some operators called multicurve
operators, play an important role.  In particular, 
the operators associated to the curves of
a decomposition into pair of pants pairwise commute and have one-dimensional joint
eigenspaces. It has been conjectured in \cite{An} that the multicurve operators are
Toeplitz operators with principal symbol a holonomy function. Several
pieces of evidence support this: the symbols of the product and
of the commutators are given at first order by the usual product and the
Poisson bracket \cite{Tu}. Furthermore the trace of the operator is equivalent
in the semiclassical limit to the average of its symbol \cite{MaNa}. 
We hope that our method could also apply to this context or at least
that our results could clarify what we can expect.

The article is organized as follows. Section \ref{sec:space-hilbert_ell} is devoted to algebraic
preliminaries. We introduce the
decompositions of $\Hilbert_\ell$ as a direct sum of lines and the
associated famillies of mutually commuting operators. Next section
concerns the geometric quantization of the polygon spaces. Applying
the "quantization commutes with reduction" theorem of Guillemin and
Sternberg \cite{GuSt}, we prove that it is isomorphic to  $\Hilbert_\ell$. We also state
the central result that the sequences $(H_{i, k\ell -1})$ are
Toeplitz operators. The main part of the proof is postponed to the
last section of the paper. Section \ref{sec:sympl-geom-polyg} is
devoted to the symplectic geometry of the polygon spaces. Generalizing
the result of \cite{KaMi}, we associate to any triangle decomposition
an action-angle
coordinate system. We also describe carefully the image of
the action coordinates and the associated torus actions. In section \ref{sec:semi-class-prop}, we deduce that the joint eigenstates of the $(H_{i,
  k\ell -1})$ are Lagrangian sections. We also state the
Bohr-Sommerfeld conditions. 
Section \ref{sec:scal-prod-lagr} is
concerned with the asymptotics of the scalar product of two Lagrangian
sections. At first order, it is given by a geometric pairing between
the symbols of the Lagrangian sections. This is then
applied in section \ref{sec:asympt-6j-symb} to estimate the classical
$6j$-symbols. The main part of the proof was already understood by
Woodward and Taylor \cite{WoTa}, except for the delicate phase
determination. 
In the last section we
consider the symplectic reduction of Toeplitz operators. We compute the principal and subprincipal
symbols of a reduced Toeplitz operator. These subprincipal estimates
are the most difficult results of the paper.

{\bf Acknowledgment} 
I would like to thank Julien March{\'e} for many helpful discussions and
his interest in this work.

\section{On the space $\Hilbert_{\ell}$} \label{sec:space-hilbert_ell}

\subsection{Outline of the following sections}

In section \ref{sec:notations},  we introduce some notations and a class of
graphs called admissible graphs. 
To each admissible graph will be
associated a set of action coordinates of the polygon space
$\module_{\ell}$ and also a set of mutually commuting operators of the
Hilbert space $\Hilbert_{\ell}$. So these graphs will be used to relate the choice of a parenthesising of the
product $V_{\ell_1} \otimes \ldots \otimes V_{\ell_n}$ with the choice
of a decomposition of a polygon into triangles. In the analogy with
the moduli space of connections and the topological quantum field
theory, the admissible graphs correspond to the graphs associated to the
decomposition into pairs of pants of a surface.

In section \ref{sec:decomp} we explain how each parenthesising of $V_{\ell_1} \otimes
\ldots \otimes V_{\ell_n}$ leads to a decomposition of
$\Hilbert_{\ell}$ as a direct sum of lines. 
% We further prove that the
% summands are naturally indexed by the admissible coloring of an
% admissible graph.  
In the next section we define for each admissible
graph a set of mutually commuting operators of
$\Hilbert_{\ell}$. Their joint spectrum is explicitly computed and
their joint eigenspaces are the lines of the decomposition
associated to a particular parenthesising. In the last section we show
a first relationship between the spaces $\Hilbert_{\ell}$ and the polygons,
namely that the existence of a non-vanishing vector in
$\Hilbert_{\ell}$ is equivalent to a parity condition and the
existence of a $n$-sided polygon with side lengths $\ell_1, \ldots,
\ell_n$. 
\subsection{Notations} \label{sec:notations}

Our normalisation for the invariant scalar product of $\mathfrak{su}(2)$
is 
$$ \langle  \xi , \eta \rangle = - \frac{1}{2} \operatorname{tr} ( \xi \eta),$$
where we identify $\mathfrak{su}(2)$ with the space of skew-Hermitian
endomorphisms of $\C^2$. 
As in the introduction, for any non-negative integer $m$, we denote by 
$V_m$ the irreducible representation
of $SU(2)$ with spin $m/2$, i.e. the dimension of $V_m$ is $m+1$. For any
$n$-tuple $\ell = (\ell_i)_{i =1, \ldots , n}$ of non-negative
integers, $\Hilbert_\ell$ is the vector space 
\begin{gather} \label{eq:def_Hilb}
  \Hilbert _{\ell}  := \operatorname{Inv}\bigl( V_{\ell_1} \otimes V_{\ell_2}
\otimes \ldots \otimes V_{\ell_n } \bigr) .
\end{gather}

Let us introduce some notations regarding graphs. We say that a graph
$\Ga$ is {\em admissible} if it is connected, acyclic and trivalent. Edges with
only one endpoint are permitted and called {\em half-edges}. The other edges
have two endpoints and are called {\em internal edges}. We denote by $\Ei (\Ga)$ the set of
internal edges, by $\Eh (\Ga)$ the set of
half-edges and by $E( \Ga) = \Ei (\Ga) \cup \Eh (\Ga)$ the set of edges.
We always
assume that the half-edges are numbered. Abusing notations, we often
identify $\Eh (\Ga)$ with  $\{ 1, \ldots , n\}$, where $n$ is the
number of half-edges.

A {\em coloring} of
$\Ga$  is an assignment of a non-negative
integer to each edge of $\Ga$. We say that three non-negative integers
$m$, $\ell$ and $p$ satisfy the {\em Clebsch-Gordan} condition $\CL (p, \ell,
m)$ if  the following holds
\begin{gather}  \label{eq:Clebsch_Gordan_condition}
p + \ell + m \in 2 \Z , \quad p \leqslant \ell + m , \quad \ell \leqslant m + p , \quad m
\leqslant \ell + p
\end{gather}
A coloring of $\Ga$ is called {\em admissible} if for
any vertex $t$, the colors of the edges incident to $t$ satisfy the Clebsch-Gordan condition.

\subsection{Natural decompositions of $\Hilbert_\ell$} \label{sec:decomp}

Let us recall that the multiplicity spaces of $SU (2)$ are
one or zero-dimensional according to the Clebsch-Gordan condition
(\ref{eq:Clebsch_Gordan}), 
$$ \dim \Hom _ {SU(2)} (V_k , V_\ell \otimes V_m ) = \begin{cases} 1 \text{ if }
  \CL ( k , \ell, m ) \text{ holds } \\ 0 \text{ otherwise}
\end{cases} $$
So the decomposition into isotypical spaces of the tensor product of
two irreducible representation is given by 
\begin{xalignat}{2} \label{eq:Clebsch_Gordan}
 V_\ell \otimes V_m = & \bigoplus_k \Hom _ {SU(2)} (V_k , V_\ell \otimes
V_m ) \otimes V_k 
=  \bigoplus_{k \; / CG ( k, \ell , m) } V_k 
\end{xalignat}
Given a parenthesising of the tensor product of $n$ irreducible
representations, one may decompose the full product into irreducible
representations by applying (\ref{eq:Clebsch_Gordan}) $(n-1)$ times in
the order prescribed by the brackets. As instance
\begin{xalignat*}{2} 
  &  V_{\ell_1}  \otimes  (V_{\ell_2} \otimes ( V_{\ell_3} \otimes  V_{\ell_4} ))  =  \bigoplus _{   m_1 /  \CL (m_1,
   \ell_3, \ell_4 ) }    V_{\ell_1}  \otimes  (V_{\ell_2} \otimes
   V_{m_1} )  \\
= & \bigoplus _{ \substack{ m_1, m_2 \; / \; \CL (m_1, \ell_3 , \ell_4)
    \\ \& \CL (m_2, \ell_2, m_1 )  }}   V_{\ell_1} \otimes  V_{m_2}  =
\bigoplus _{ \substack{ m_1, m_2, m_3 \;  / \; \CL (m_1, \ell_3 , \ell_4)  \\  \CL (m_2,
   \ell_2, m_1 ) \; \& \; \CL (m_3, \ell_1, m_2)}}    V_{m_3} 
\end{xalignat*}
Then taking the invariant part we obtain a decomposition into a direct
sum of lines. In the previous example, we obtain
\begin{xalignat*}{2} 
    (V_{\ell_1}  \otimes  (V_{\ell_2} \otimes ( V_{\ell_3} \otimes
  V_{\ell_4} )))^{SU(2)}  & = \bigoplus _{ \substack{ m_1, m_2 \;  / \; \CL (m_1, \ell_3 , \ell_4)  \\  \CL (m_2,
   \ell_2, m_1 ) \; \& \; \CL (0, \ell_1, m_2)}}    V_{0} \\
 & = \bigoplus _{ \substack{ m_1 \;  / \; \CL (m_1, \ell_3 , \ell_4)  \\  \CL (\ell_1,
   \ell_2, m_1 ) }}    V_{0}  
\end{xalignat*}
because $\CL (0, \ell_1,
m_2)$ is equivalent to $\ell_1 = m_2$.
In general let us show that the lines of
the decomposition are naturally indexed by admissible colorings of a
certain admissible graph.  First one associates to each parenthesising of a
product of $n$ factors,  a tree with $n$ leaves, $n-2$ trivalent vertices and one
bivalent vertex (called the root), cf. figure \ref{fig:parenthesage}
for a few examples.
\begin{figure}
\begin{center}
\begin{picture}(0,0)%
\includegraphics{figure2.pstex}%
\end{picture}%
\setlength{\unitlength}{2960sp}%
\begingroup\makeatletter\ifx\SetFigFont\undefined%
\gdef\SetFigFont#1#2#3#4#5{%
  \reset@font\fontsize{#1}{#2pt}%
  \fontfamily{#3}\fontseries{#4}\fontshape{#5}%
  \selectfont}%
\fi\endgroup%
\begin{picture}(7776,2590)(301,-4169)
\put(601,-1711){\makebox(0,0)[lb]{\smash{{\SetFigFont{9}{10.8}{\rmdefault}{\mddefault}{\updefault}{\color[rgb]{0,0,0}$1$}%
}}}}
\put(1201,-1711){\makebox(0,0)[lb]{\smash{{\SetFigFont{9}{10.8}{\rmdefault}{\mddefault}{\updefault}{\color[rgb]{0,0,0}$2$}%
}}}}
\put(1801,-1711){\makebox(0,0)[lb]{\smash{{\SetFigFont{9}{10.8}{\rmdefault}{\mddefault}{\updefault}{\color[rgb]{0,0,0}$3$}%
}}}}
\put(2401,-1711){\makebox(0,0)[lb]{\smash{{\SetFigFont{9}{10.8}{\rmdefault}{\mddefault}{\updefault}{\color[rgb]{0,0,0}$4$}%
}}}}
\put(3301,-1711){\makebox(0,0)[lb]{\smash{{\SetFigFont{9}{10.8}{\rmdefault}{\mddefault}{\updefault}{\color[rgb]{0,0,0}$1$}%
}}}}
\put(3901,-1711){\makebox(0,0)[lb]{\smash{{\SetFigFont{9}{10.8}{\rmdefault}{\mddefault}{\updefault}{\color[rgb]{0,0,0}$2$}%
}}}}
\put(4501,-1711){\makebox(0,0)[lb]{\smash{{\SetFigFont{9}{10.8}{\rmdefault}{\mddefault}{\updefault}{\color[rgb]{0,0,0}$3$}%
}}}}
\put(5101,-1711){\makebox(0,0)[lb]{\smash{{\SetFigFont{9}{10.8}{\rmdefault}{\mddefault}{\updefault}{\color[rgb]{0,0,0}$4$}%
}}}}
\put(6001,-1711){\makebox(0,0)[lb]{\smash{{\SetFigFont{9}{10.8}{\rmdefault}{\mddefault}{\updefault}{\color[rgb]{0,0,0}$1$}%
}}}}
\put(6601,-1711){\makebox(0,0)[lb]{\smash{{\SetFigFont{9}{10.8}{\rmdefault}{\mddefault}{\updefault}{\color[rgb]{0,0,0}$2$}%
}}}}
\put(7201,-1711){\makebox(0,0)[lb]{\smash{{\SetFigFont{9}{10.8}{\rmdefault}{\mddefault}{\updefault}{\color[rgb]{0,0,0}$3$}%
}}}}
\put(7801,-1711){\makebox(0,0)[lb]{\smash{{\SetFigFont{9}{10.8}{\rmdefault}{\mddefault}{\updefault}{\color[rgb]{0,0,0}$4$}%
}}}}
\put(301,-4111){\makebox(0,0)[lb]{\smash{{\SetFigFont{9}{10.8}{\rmdefault}{\mddefault}{\updefault}{\color[rgb]{0,0,0}$((V_{\ell_1} \otimes V_{\ell_2}) \otimes V_{\ell_3}) \otimes V_{\ell_4}$,}%
}}}}
\put(3001,-4111){\makebox(0,0)[lb]{\smash{{\SetFigFont{9}{10.8}{\rmdefault}{\mddefault}{\updefault}{\color[rgb]{0,0,0}$(V_{\ell_1} \otimes (V_{\ell_2} \otimes V_{\ell_3})) \otimes V_{\ell_4}$,}%
}}}}
\put(5701,-4111){\makebox(0,0)[lb]{\smash{{\SetFigFont{9}{10.8}{\rmdefault}{\mddefault}{\updefault}{\color[rgb]{0,0,0}$(V_{\ell_1} \otimes V_{\ell_2}) \otimes (V_{\ell_3} \otimes V_{\ell_4})$}%
}}}}
\end{picture}%
\caption{The graphs associated to various parenthesisings} \label{fig:parenthesage}
\end{center}
\end{figure}

A coloring of such a tree is the assignment of an integer to
each edge. It is admissible if the colors of the edges incident
to the root are equal and if furthermore at each trivalent vertex, the colors of
the incident edges satisfy the Clebsch-Gordan condition. Now given a
parenthesising and the associated tree, it is clear that the summands
of the decomposition coming from the parenthesising are indexed by the
admissible colorings  of the tree such that the edge incident to the
$k$-th leave is colored by $\ell_k$. 

Finally given a tree as above, let us consider the admissible graph
obtained by removing the leaves and the root and by merging the two edges
incident to the root, cf. as instance figure \ref{fig:tree_graph}. 
\begin{figure}
\begin{center}
\begin{picture}(0,0)%
\includegraphics{figure3.pstex}%
\end{picture}%
\setlength{\unitlength}{2960sp}%
\begingroup\makeatletter\ifx\SetFigFont\undefined%
\gdef\SetFigFont#1#2#3#4#5{%
  \reset@font\fontsize{#1}{#2pt}%
  \fontfamily{#3}\fontseries{#4}\fontshape{#5}%
  \selectfont}%
\fi\endgroup%
\begin{picture}(5124,2094)(589,-3673)
\put(601,-1711){\makebox(0,0)[lb]{\smash{{\SetFigFont{9}{10.8}{\rmdefault}{\mddefault}{\updefault}{\color[rgb]{0,0,0}$1$}%
}}}}
\put(1201,-1711){\makebox(0,0)[lb]{\smash{{\SetFigFont{9}{10.8}{\rmdefault}{\mddefault}{\updefault}{\color[rgb]{0,0,0}$2$}%
}}}}
\put(1801,-1711){\makebox(0,0)[lb]{\smash{{\SetFigFont{9}{10.8}{\rmdefault}{\mddefault}{\updefault}{\color[rgb]{0,0,0}$3$}%
}}}}
\put(2401,-1711){\makebox(0,0)[lb]{\smash{{\SetFigFont{9}{10.8}{\rmdefault}{\mddefault}{\updefault}{\color[rgb]{0,0,0}$4$}%
}}}}
\put(3901,-2311){\makebox(0,0)[lb]{\smash{{\SetFigFont{9}{10.8}{\rmdefault}{\mddefault}{\updefault}{\color[rgb]{0,0,0}$1$}%
}}}}
\put(3901,-3361){\makebox(0,0)[lb]{\smash{{\SetFigFont{9}{10.8}{\rmdefault}{\mddefault}{\updefault}{\color[rgb]{0,0,0}$2$}%
}}}}
\put(5251,-2311){\makebox(0,0)[lb]{\smash{{\SetFigFont{9}{10.8}{\rmdefault}{\mddefault}{\updefault}{\color[rgb]{0,0,0}$4$}%
}}}}
\put(5251,-3361){\makebox(0,0)[lb]{\smash{{\SetFigFont{9}{10.8}{\rmdefault}{\mddefault}{\updefault}{\color[rgb]{0,0,0}$3$}%
}}}}
\end{picture}%
\caption{A tree and its associated admissible graph} \label{fig:tree_graph}
\end{center}
\end{figure}
Observe that the admissible colorings of the
graph are in one-to-one correspondence with the admissible colorings of
the tree. To summarize we have proven the following

\begin{prop} \label{sec:decomp-graphe}
Let $n \geqslant 3$ and $\ell$ be a $n$-tuple of
non-negative integers.  
For any way of parenthesizing the product $V_{\ell_1} \otimes \ldots
\otimes V_{\ell_n}$, there exists an admissible graph $\Ga$ with $n$
half-edges and a
decomposition into a sum of lines    
$$\Hilbert_{\ell} = \bigoplus D_{\varphi} $$
where $\varphi$ runs over the colorings of $\Ga$ such that $\varphi ( i
) = \ell_i$ for any half-edge $i$. 
\end{prop}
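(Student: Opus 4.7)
The plan is to make rigorous the iterated Clebsch--Gordan computation illustrated just before the statement. First I would set up the bijection between parenthesisings of a product of $n$ factors and rooted binary trees $T$ with $n$ numbered leaves, $n-2$ trivalent internal vertices, and one bivalent root, and record that each application of (\ref{eq:Clebsch_Gordan}) corresponds to contracting one pair of brackets. The integer decorating an edge of $T$ will record the irreducible summand that appears after the partial contraction represented by that edge.

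Next I would run the iterated decomposition by induction on the number of brackets remaining: starting from the innermost pair, each use of (\ref{eq:Clebsch_Gordan}) splits the current tensor product as a direct sum indexed by the colour of the newly created internal edge, with summands equal either to zero or to a single copy of the next tensor product, depending on whether $\CL$ is satisfied at the corresponding trivalent vertex. After $n-1$ steps one obtains
\begin{equation*}
V_{\ell_1}\otimes\ldots\otimes V_{\ell_n} \;=\; \bigoplus_{\phi} V_{\phi(r)},
\end{equation*}
where $\phi$ runs over the admissible colourings of $T$ with the $k$-th leaf coloured by $\ell_k$, and $r$ denotes the edge just above the root. Taking $SU(2)$-invariants then selects the summands with $\phi(r)=0$, and because of the bivalent root these are exactly the summands for which the two edges just below the root carry the same colour; each such summand is one-dimensional.

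Finally I would translate this to the graph $\Ga$ obtained from $T$ by deleting the $n$ leaf edges and merging the two root edges into one internal edge, keeping the numbering of the half-edges. By construction $\Ga$ is connected, acyclic, trivalent with $n$ numbered half-edges, hence admissible in the sense of section \ref{sec:notations}. The map sending a colouring $\phi$ of $T$ with $\phi(r)=0$ and equal colours on the two root edges to the induced colouring $\varphi$ of $\Ga$ is a bijection, and the Clebsch--Gordan condition at every vertex of $\Ga$ corresponds exactly to admissibility of $\phi$ at every trivalent vertex of $T$. Setting $D_\varphi$ to be the line produced by the iterated decomposition yields the announced $\Hilbert_\ell=\bigoplus_\varphi D_\varphi$.

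The step that requires the most care is not conceptual but bookkeeping: verifying that at each stage of the iteration the decomposition is \emph{canonical} (so that the $D_\varphi$ are intrinsically defined lines, independent of the order in which the brackets are contracted among non-nested pairs), and checking that the tree-to-graph reduction indeed realises the condition ``$\phi(r)=0$ and the two root edges have equal colour'' as the full set of admissible colourings of $\Ga$ extending $\ell$.
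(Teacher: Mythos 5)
Your argument follows the same route as the paper's: build the rooted binary tree from the parenthesising, apply (\ref{eq:Clebsch_Gordan}) iteratively so that the summands are indexed by admissible colourings of the tree extending $\ell$ on the leaves, take $SU(2)$-invariants, and pass to the admissible graph $\Ga$ by deleting the leaf edges and contracting at the root. The only superficial difference is bookkeeping at the root --- you append a formal top edge $r$ with $\phi(r)=0$, while the paper keeps a bivalent root and imposes equality of the two incident colours, the two being equivalent since $\CL(0,a,b)$ holds iff $a=b$ --- and the canonicality point you flag (independence of the $D_\varphi$ from the order in which non-nested brackets are contracted) indeed needs checking, but follows from each step being an isotypic decomposition; the paper handles it implicitly through the remark after the proposition and Theorem~\ref{sec:spectre}.
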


Observe that different parenthesisings may be associated to the same graph. As we will see in the next section not only the indexing set but also
the summands $D_\varphi$ of the decomposition depend only on the graph $\Ga$.

\subsection{A complete set of observables} \label{sec:definition_Ha}

Recall first that the Casimir operator $Q$ of a representation $\rho$
of $SU(2)$ on a finite dimensional vector space $V$ is the
self-adjoint operator $V \rightarrow V$  defined by
\begin{gather} \label{eq:Casimir2}
 Q = - (  \rho ( \xi_1 ) ^2 +  \rho ( \xi_2 ) ^2 +  \rho ( \xi_3 ) ^2)
\end{gather}
where $(\xi_1, \xi_2 , \xi_3)$ is an orthogonal base of $\mathfrak{su}
(2)$. Its eigenspaces are the isotypical
subspaces of $V$, the eigenvalue $ 4 j ( j+ 1)$ corresponding to the spin $j$ isotypical
subspace.

The following lemma will be useful in the sequel. Consider a representation $\rho_1 \times \rho_2$  of $SU (2) \times
SU(2)$ on a finite dimensional vector space $V$.  Denote by $\rho$ the
$SU(2)$-representation on $V$ by the diagonal inclusion $\rho(g) = \rho_1(g)
\times \rho_2 (g)$. 

\begin{lemma} \label{sec:lemme_stupide}
The Casimir operators $Q_1$ and $Q_2$ of the representations $\rho_1$ and
$\rho_2$ commute with $\rho$. Furthermore the restrictions
of $Q_1$ and $Q_2$  to the $\rho$-invariant part of $V$ coincide. 
\end{lemma}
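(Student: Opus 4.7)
The plan exploits two standard ingredients: the Casimir $Q_i$ lies in the center of the enveloping algebra of $\mathfrak{su}(2)$, and the images $\rho_1(SU(2))$ and $\rho_2(SU(2))$ commute pointwise inside $\End(V)$ because the two factors of $SU(2)\times SU(2)$ commute. For the first assertion, $Q_1$ commutes with every $\rho_1(g)$ by centrality of the Casimir, and each $d\rho_1(\xi_i)$ commutes with every $\rho_2(g)$ by the pointwise commutativity of the two subgroups; hence $Q_1$ commutes with $\rho(g) = \rho_1(g)\rho_2(g)$, and the symmetric argument handles $Q_2$.

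For the second assertion, the idea is to differentiate the invariance condition. If $v \in V$ is $\rho$-invariant and $\xi \in \mathfrak{su}(2)$, then $\rho_1(\exp t\xi)\rho_2(\exp t\xi) v = v$ for all $t$, and taking $\partial_t|_{t=0}$ yields the key identity
\begin{equation*}
d\rho_1(\xi) v + d\rho_2(\xi) v = 0.
\end{equation*}
Applying this twice with $\xi = \xi_i$, and sliding $d\rho_1(\xi_i)$ past $d\rho_2(\xi_i)$ in between (they commute because the two subalgebras do), one gets
\begin{equation*}
Q_1 v = -\sum_i d\rho_1(\xi_i)^2 v = \sum_i d\rho_1(\xi_i)\, d\rho_2(\xi_i) v = \sum_i d\rho_2(\xi_i)\, d\rho_1(\xi_i) v = -\sum_i d\rho_2(\xi_i)^2 v = Q_2 v.
\end{equation*}

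The one subtle point, which the commutation step is precisely designed to handle, is that the identity $d\rho_1(\xi) + d\rho_2(\xi) = 0$ only holds on invariant vectors: after a single application the vector $d\rho_1(\xi_i) v$ need not lie in the $\rho$-invariant part, so one cannot reapply the identity to it directly. Swapping the two differentials first brings $v$ back into the rightmost slot, where the identity does apply again. A quicker but less self-contained alternative would be to decompose $V$ as an $SU(2)\times SU(2)$-representation into summands $V_m \boxtimes V_{m'}$, on which $Q_1$ and $Q_2$ act as the scalars $m(m+2)$ and $m'(m'+2)$ respectively, with the diagonal $SU(2)$-invariant subspace nontrivial only when $m = m'$, where the two scalars agree. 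The lemma is genuinely routine and I do not expect any real obstacle.
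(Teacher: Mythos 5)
Your primary argument — differentiating the invariance condition to get $d\rho_1(\xi)v + d\rho_2(\xi)v = 0$, then pushing one factor of $d\rho_1(\xi_i)$ through $d\rho_2(\xi_i)$ so the identity can be reapplied on the right — is correct, and it is a genuinely different route from the paper's. The paper's proof is exactly your ``quicker but less self-contained alternative'': decompose $V$ into $\rho_1\times\rho_2$-isotypical pieces $V_a\boxtimes V_b$, observe that $Q_1$ and $Q_2$ act on each piece by $a(a+2)$ and $b(b+2)$, and invoke Clebsch--Gordan (equation~(\ref{eq:Clebsch_Gordan})) to conclude that the diagonal invariants are trivial unless $a=b$, where the two scalars agree. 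Your direct calculation buys independence from the Clebsch--Gordan decomposition and works verbatim for any compact group in place of $SU(2)$ (indeed for any group whose Casimir is built from an $\Ad$-invariant form), whereas the paper's argument is shorter and simultaneously handles the first assertion for free, since the isotypical pieces are $\rho$-invariant and $Q_1$, $Q_2$ are scalar on each. One small stylistic point: your display compresses two uses of the invariance identity and one commutation into a single chain; the intermediate justification that the rightmost $d\rho_1(\xi_i)v$ can be rewritten as $-d\rho_2(\xi_i)v$ in both the first and third equalities, while the middle equality is pure commutation, is worth stating explicitly as you do in the surrounding prose.
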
 

\begin{proof} 
To check this, it suffices to decompose $V$ as the sum of isotypical subspaces
for the product representation $\rho_1 \times \rho_2$. Indeed $Q_1$ and $Q_2$
preserve this decomposition and act
by multiplication by $a(a+2)$ and $b(b+2)$ respectively on the $V_a
\otimes V_b$-isotypical subspace. Furthermore,  it follows from
(\ref{eq:Clebsch_Gordan})  that the
$\rho$-invariant part of the $V_a
\otimes V_b$-isotypical subspace is trivial if $a \neq b$. 
\end{proof} 
 For any subset $I$ of $\{1, \ldots , n \}$, let $\rho_{I,\ell}$ be the representation 
of $SU(2)$ in the tensor product  $V_{\ell_1} 
\otimes \ldots \otimes V_{\ell_n }$ given by  
\begin{gather*} 
   \rho_{I, \ell}(g)  ( v_{1} \otimes \ldots \otimes
v_{n} )  =  w_1 \otimes \ldots \otimes w_n , \qquad \text{ where } w_i
= \begin{cases} g.v_i \text{ if } i \in I \\ v_i \text{ otherwise.} 
\end{cases}
\end{gather*}
In particular $\rho_{\{1, \ldots , n\}, \ell} $ is the diagonal
representation whose invariant subspace is $\Hilbert_{\ell}$. 
By the previous lemma, the Casimir operator $Q_{I, \ell}$ of $\rho_{I,
  \ell}$ preserves the subspace
$\Hilbert_{\ell}$. Furthermore the restrictions of $ Q_{I, \ell}$ and $Q_{
  I^c, \ell}$ to $\Hilbert_{\ell}$ coincide.

Consider now an admissible graph $\Ga$ with $n$ half-edges. 
Let $a$ be an internal edge of $\Ga$. Assume $a$ is directed and
consider the set $I(a)$ of half-edges
which are connected to the initial vertex of $a$ by a path which
does not contain the terminal vertex of $a$. 
 We define the operator 
\begin{gather} \label{eq:def_Ha}
H_{a, \ell}: \Hilbert _{\ell} \rightarrow \Hilbert_{\ell}
\end{gather}
as the restriction of $Q_{I(a), \ell}$ to
${\Hilbert_\ell}$. If we change the direction of $a$, we replace
$I(a)$ by its complementary subset. So
the operator $H_{a, \ell}$ is defined independently of the orientation of the
edge. 

\begin{theo} \label{sec:spectre} Let $n \geqslant 4$. Let $\Ga$ be an
  admissible graph with $n$
  half-edges. Let $\ell$ be a $n$-tuple of non-negative integers. Then the operators $H_{a, \ell}$, $a \in \Ei(\Ga)$, mutually commute. The joint
eigenvalues of these operators are the $(n-3)$-tuples 
$$ \bigl( {\varphi}(a)({\varphi}(a) +2)
\bigr)_{a\in \Ei (\Ga) } $$
where $\varphi$ runs over the admissible colorings of $\Ga$ such that
$\varphi (i ) = \ell_i$ for any half-edge $i$. Each joint
eigenvalue is simple.  

Furthermore if $\Ga$ is the graph associated to
a parenthesising of $V_{\ell_1} \otimes \ldots \otimes V_{\ell_n}$ as
in proposition \ref{sec:decomp-graphe}, the joint eigenspace associated to the coloring
$\varphi$ is the line $D_{\varphi}$ of proposition \ref{sec:decomp-graphe}. 
\end{theo}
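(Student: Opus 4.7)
The plan is to establish commutativity from the tree structure of $\Ga$ together with the centrality of the Casimir, and then identify the joint eigenspaces via the decomposition supplied by Proposition \ref{sec:decomp-graphe}.

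\textbf{Commutativity.} I would first fix two internal edges $a,b$ of $\Ga$. Since $\Ga$ is a tree, removing $\{a,b\}$ splits it into three connected pieces whose half-edges partition $\{1,\ldots,n\}$ into three blocks; by orienting $a$ and $b$ appropriately one may arrange either $I(a) \cap I(b) = \emptyset$ or $I(a) \subseteq I(b)$. In the disjoint case, $Q_{I(a)}$ and $Q_{I(b)}$ act on different tensor factors of $V_{\ell_1} \otimes \cdots \otimes V_{\ell_n}$ and commute trivially. In the nested case, set $A_i = \rho_{I(a)}(\xi_i)$ and $B_i = \rho_{I(b)\setminus I(a)}(\xi_i)$, so that $[A_i,B_j]=0$ and
$$ Q_{I(b)} \;=\; -\sum_i (A_i+B_i)^2 \;=\; Q_{I(a)} + Q_{I(b)\setminus I(a)} - 2\sum_i A_iB_i .$$
The first two summands commute with $Q_{I(a)}$ by the previous cases, and the cross term does as well since $Q_{I(a)}$ is the image of the central Casimir element of $U(\mathfrak{su}(2))$ and therefore commutes with every $A_i$. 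Because $H_{a,\ell}$ is independent of the orientation (by Lemma \ref{sec:lemme_stupide}), this yields $[H_{a,\ell},H_{b,\ell}]=0$ on $\Hilbert_\ell$.

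\textbf{Spectrum and simplicity.} I would then invoke Proposition \ref{sec:decomp-graphe} for a parenthesising whose associated graph is $\Ga$; such a parenthesising is produced by subdividing one internal edge of $\Ga$ by a new bivalent root vertex (possibly after reordering the tensor factors, which does not affect the construction). The proposition gives
$$ \Hilbert_\ell \;=\; \bigoplus_\varphi D_\varphi ,$$
$\varphi$ ranging over admissible colorings of $\Ga$ extending $\ell$. For each $a \in \Ei(\Ga)$, tracing the iterated Clebsch-Gordan steps along the subtree on the $I(a)$-side of $a$ shows that $D_\varphi$ is contained in the $V_{\varphi(a)}$-isotypical component of $W_{I(a)} := \bigotimes_{i \in I(a)} V_{\ell_i}$ for the diagonal action, on which $Q_{I(a)}$ acts by the scalar $\varphi(a)(\varphi(a)+2)$. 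Hence $D_\varphi$ sits in the joint eigenspace for the tuple $(\varphi(a)(\varphi(a)+2))_a$. Because $x\mapsto x(x+2)$ is strictly increasing on $\N$, two distinct admissible colorings yield two distinct eigenvalue tuples, so the $D_\varphi$ lie in pairwise distinct joint eigenspaces; since they already sum to $\Hilbert_\ell$, each must coincide with a full joint eigenspace and is therefore one-dimensional. This establishes simultaneously the description of the joint spectrum, the simplicity of each joint eigenvalue, and the identification of the joint eigenspaces with the lines $D_\varphi$ in the parenthesising case.

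The main technical subtlety lies in the nested case of commutativity, where one must rely on centrality of the Casimir in $U(\mathfrak{su}(2))$ to handle the cross-terms $2\sum_i A_i B_i$; everything else amounts to bookkeeping on the tree $\Ga$, together with the observation that every admissible graph can indeed be realised by a parenthesising so that Proposition \ref{sec:decomp-graphe} is applicable.
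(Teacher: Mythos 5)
Your proof is correct and follows essentially the same route as the paper's. Two small differences worth noting: for commutativity, the paper observes that in a tree one can always orient $a,b$ so that $I(a)\cap I(b)=\emptyset$ and stops there, whereas you additionally handle the nested case $I(a)\subseteq I(b)$ by expanding $Q_{I(b)}$ and invoking centrality of the Casimir; the latter is correct but superfluous since the disjoint reduction is always available. For the identification of the joint eigenspaces, the paper argues directly that the decomposition $\bigoplus D_\varphi$ built from iterated Clebsch--Gordan is literally the joint eigenspace decomposition of the $Q_{I(a),\ell}$ together with $Q_{\{1,\ldots,n\},\ell}$, while you prove the inclusion $D_\varphi \subseteq$ (joint eigenspace for $(\varphi(a)(\varphi(a)+2))_a$) and upgrade it to equality by the injectivity of $m\mapsto m(m+2)$ and a dimension count; both are valid and roughly equal in length. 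One place to be a bit more careful is the remark "possibly after reordering the tensor factors, which does not affect the construction": as the paper makes explicit, what one uses is that the canonical permutation isomorphism intertwines $Q_{I(a),\ell}$ with $Q_{\sigma(I(a)),\sigma(\ell)}$, so that it suffices to prove the theorem for one ordering of the half-edges.
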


\begin{proof}
To prove that $H_{a, \ell}$ and $H_{b, \ell}$ commute for any internal edges $a$
and $b$, we use that these operators do not depend on the direction of the edges. Changing these directions if necessary, we
may assume that $I(a)$ and $I(b)$ are disjoint. Then the
representations $\rho_{I(a), \ell}$ and $\rho_{I(b), \ell}$
commute. So their Casimir operators also commute.

 Observe that for any permutation $\si$ of $\{ 1, \ldots , n \}$, the canonical
isomorphism 
$$ V_{\ell_1} \otimes \ldots \otimes V_{\ell_n} \rightarrow
V_{\ell_{\si(1)}} \otimes \ldots \otimes V_{\ell_{\si(n)}}$$
intertwins $Q_{I(a), \ell}$ with $Q_{\si(I(a)), \si(\ell)}$ where $\si
  (\ell) = (\ell_{\si (1)}, \ldots, \ell_{\si (n)})$. Hence it is
  sufficient to prove the first part of the theorem for one order of the
  half-edges. 
It is easily seen that the graph $\Ga$ endowed with an appropriate order of the
half-edges is induced by a parenthesising of the product $V_{\ell_1}
\otimes \ldots \otimes V_{\ell_n}$. So it suffices to show that
each summand $D_\varphi$ of the associated decomposition is the joint
eigenspace of the $(H_{a, \ell})$ with joint eigenvalue
$(\varphi(a)( \varphi (a) + 2))$. 

To see this, one considers first the tree associated to the
parenthesising and orient each edge from the leaves to the
root. Then one may associate a representation $\rho_{I(a), \ell}$ to
each internal edge $a$ of the tree exactly as we did for
admissible graphs. Recall that to
define the summands $D_\varphi$, we first express $V_{\ell_1} \otimes
\ldots \otimes V_{\ell_n}$ as 
a direct sum of irreducible representations by decomposing the partial
products with (\ref{eq:Clebsch_Gordan}). This amounts to decompose $V_{\ell_1} \otimes \ldots \otimes V_{\ell_n}$
with respect to the isotypical subspaces of the $\rho_{I(a), \ell}$ and
$\rho_{\{1, \ldots, n \}, \ell}$, the last one for the
root. Equivalently one considers the direct sum of the joint eigenspaces of the
associated Casimir operators $Q_{I(a), \ell}$ and $Q_{\{1, \ldots, n
  \}, \ell }$. Then we take the invariant part which is the kernel of 
$Q_{\{1, \ldots, n \}, \ell }$. To conclude observe that the operators
$Q_{I(a), \ell}$ associated to the internal edges of the tree are the
same as the ones associated to the internal edges of $\Ga$. 
\end{proof}

\subsection{On the non-triviality of $\Hilbert_{\ell}$}

Since it is preferable to know whether $\Hilbert_\ell$ is trivial, we prove the
following 

\begin{prop} \label{cor:le_cor}
Let $n \geqslant 1$ and $\ell$ be a $n$-tuple of
non-negative integers.  
Then $\Hilbert_{\ell}$ is not trivial if and only if
the sum $\ell_1 +
\ldots + \ell_n$ is even and there exists a $n$-sided planar polygon with
side lengths $\ell_1$, \ldots, $\ell_n$. 
\end{prop}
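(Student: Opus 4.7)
The plan is to translate the question into combinatorics on admissible graphs via proposition \ref{sec:decomp-graphe}. First I would dispose of the small cases $n=1,2$ by direct computation: $\Hilbert_{\ell_1}\neq 0$ iff $\ell_1=0$ (the sum is $0$, even, and a degenerate polygon exists), and $(V_{\ell_1}\otimes V_{\ell_2})^{SU(2)}\neq 0$ iff $\ell_1=\ell_2$ (the sum is even and a $2$-gon with equal sides exists). In each case both conditions match the claimed equivalence.

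For $n\geq 3$, I would pick any admissible graph $\Ga$ with $n$ half-edges, for concreteness the linear one whose internal edges form a path. By proposition \ref{sec:decomp-graphe}, $\Hilbert_\ell$ is non-trivial iff $\Ga$ admits an admissible coloring $\varphi$ with $\varphi(i)=\ell_i$ on each half-edge $i$. It then suffices to prove that such a coloring exists iff $\ell_1+\cdots+\ell_n\in 2\Z$ and the polygon inequalities $\ell_i\leq\sum_{j\neq i}\ell_j$ hold for every $i$, which is the classical criterion for the existence of a planar $n$-gon with the prescribed side lengths.

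For the \emph{necessity} part, summing the parity relation in (\ref{eq:Clebsch_Gordan_condition}) over all vertices of $\Ga$ shows, since each internal edge is counted twice and each half-edge once, that $\sum_i \ell_i$ is even. For the polygon inequality with index $i_0$, I would apply the triangle inequalities $\varphi(e)\leq \varphi(e')+\varphi(e'')$ at each vertex and propagate along the unique simple path in $\Ga$ starting from the half-edge indexed $i_0$; the internal colors telescope and one obtains $\ell_{i_0}\leq \sum_{j\neq i_0}\ell_j$.

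For the \emph{sufficiency} part I would induct on $n$. The base case $n=3$ is immediate: $\CL(\ell_1,\ell_2,\ell_3)$ is exactly the conjunction of parity and the three triangle inequalities. For $n\geq 4$, working on the linear graph, an admissible coloring exists iff one can find an integer $\lambda$ with $\CL(\lambda,\ell_1,\ell_2)$ such that the shorter tuple $(\lambda,\ell_3,\ldots,\ell_n)$ still satisfies parity and polygon inequalities, to which the inductive hypothesis applies. The parity of $\lambda$ is determined by $\ell_1+\ell_2$ modulo $2$, and all the inequalities together confine $\lambda$ to the intersection of the Clebsch-Gordan window $[|\ell_1-\ell_2|,\ell_1+\ell_2]$ with the window forced by the polygon condition on $(\lambda,\ell_3,\ldots,\ell_n)$. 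The main obstacle is precisely this last arithmetic check: a short case analysis on which $\ell_j$ ($j\geq 3$) attains the maximum, together with the original hypothesis $\max_i\ell_i\leq\sum_{j\neq i}\ell_j$, shows that the intersection is wide enough to contain an integer of the required parity. Once this is settled, the induction closes and the proposition follows.
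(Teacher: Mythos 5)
Your proposal takes essentially the same route as the paper: dispose of small $n$ directly, translate non-triviality into the existence of an admissible coloring of the caterpillar graph $\Ga_n$ (proposition \ref{sec:decomp-graphe}), prove necessity by summing the parity relations and propagating triangle inequalities, and prove sufficiency by induction on $n$, peeling off two sides and searching for an admissible ``intermediate'' length $\lambda=\ell'_n$ in the intersection of the Clebsch--Gordan interval $[\,|\ell_1-\ell_2|,\,\ell_1+\ell_2\,]$ with the polygon window for $(\lambda,\ell_3,\dots,\ell_n)$.

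The one place you gloss over is exactly where the paper is careful, and your phrasing there is not quite right. You assert that a ``case analysis on which $\ell_j$ attains the maximum \dots\ shows that the intersection is wide enough to contain an integer of the required parity.'' But the intersection need not be ``wide enough'': when the two intervals meet in a single point, it is one integer, not a window of length $\geqslant 1$, and then the parity has to come out right on the nose. The paper's argument does not invoke any case analysis on which length is maximal; instead it uses that (after discarding zero lengths so that both windows have nonempty interior) all four endpoints are integers, so a nonempty intersection either contains two consecutive integers (one of the right parity) or is the singleton $\{m\}$ or $\{M\}$ with $m=|\ell_n-\ell_{n+1}|$, $M=\ell_n+\ell_{n+1}$, each of which automatically has the right parity relative to $\ell_n+\ell_{n+1}$. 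You should replace the ``wide enough'' claim by this dichotomy, and you should also note the reduction to strictly positive $\ell_i$ (a vanishing side can be dropped, shortening the tuple), which is what guarantees the two windows have nonempty interior and makes the degenerate case tractable. With those two repairs your argument coincides with the paper's.
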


Furthermore the existence of a planar polygon with side lengths
$\ell_1, \ldots, \ell_n$ is
equivalent to the inequalities $\ell_j \leqslant \tfrac{1}{2} (\ell_1 +
\ldots + \ell_n )$ for $j=1, \ldots , n$.  
\begin{proof} 
% First, one can eliminate the vanishing $\ell_i$ without modifying any
% of the conditions. So we assume that the $\ell_i$ are all positive.   
If $n=1,2, 3$,  the result is easily proved. For $n \geqslant 3$,
consider the graph $\Ga_{n}$ in figure \ref{fig:gamma_n}. 
\begin{figure}
\begin{center}
\begin{picture}(0,0)%
\includegraphics{figure4.pstex}%
\end{picture}%
\setlength{\unitlength}{2960sp}%
\begingroup\makeatletter\ifx\SetFigFont\undefined%
\gdef\SetFigFont#1#2#3#4#5{%
  \reset@font\fontsize{#1}{#2pt}%
  \fontfamily{#3}\fontseries{#4}\fontshape{#5}%
  \selectfont}%
\fi\endgroup%
\begin{picture}(5088,1332)(1189,-3110)
\put(6001,-2161){\makebox(0,0)[lb]{\smash{{\SetFigFont{9}{10.8}{\rmdefault}{\mddefault}{\updefault}$n$}}}}
\put(5101,-3061){\makebox(0,0)[lb]{\smash{{\SetFigFont{9}{10.8}{\rmdefault}{\mddefault}{\updefault}$n-1$}}}}
\put(3001,-3061){\makebox(0,0)[lb]{\smash{{\SetFigFont{9}{10.8}{\rmdefault}{\mddefault}{\updefault}{\color[rgb]{0,0,0}$3$}%
}}}}
\put(1201,-2161){\makebox(0,0)[lb]{\smash{{\SetFigFont{9}{10.8}{\rmdefault}{\mddefault}{\updefault}{\color[rgb]{0,0,0}$1$}%
}}}}
\put(2101,-3061){\makebox(0,0)[lb]{\smash{{\SetFigFont{9}{10.8}{\rmdefault}{\mddefault}{\updefault}{\color[rgb]{0,0,0}$2$}%
}}}}
\end{picture}%
\caption{The graph $\Ga_n$} \label{fig:gamma_n}
\end{center}
\end{figure}
By proposition \ref{sec:decomp-graphe}, the non-triviality of
$\Hilbert_\ell$ is equivalent to the existence of a coloring $\varphi$
of $\Ga_n$ such that $\varphi (i ) = \ell_i$ for any $i$. If such a
coloring exists, by summing the parity conditions
(\ref{eq:Clebsch_Gordan_condition}) at each vertex we obtain that the
sum of the $\ell_i$ is even. Furthermore the inequalities in
(\ref{eq:Clebsch_Gordan_condition}) are equivalent to the existence of
a triangle with side lengths $k$, $\ell$, $m$. Given a coloring
of $\Ga$, one can patch together
the triangles associated to the various vertices as in figure
\ref{fig:pol_gamma_n}. One obtains a
$n$-sided polygon with the required side lengths. 

\begin{figure}
\begin{center}
\begin{picture}(0,0)%
\includegraphics{figure5.pstex}%
\end{picture}%
\setlength{\unitlength}{3355sp}%
\begingroup\makeatletter\ifx\SetFigFont\undefined%
\gdef\SetFigFont#1#2#3#4#5{%
  \reset@font\fontsize{#1}{#2pt}%
  \fontfamily{#3}\fontseries{#4}\fontshape{#5}%
  \selectfont}%
\fi\endgroup%
\begin{picture}(4362,2920)(451,-2369)
\put(451,-661){\makebox(0,0)[lb]{\smash{{\SetFigFont{10}{12.0}{\rmdefault}{\mddefault}{\updefault}{\color[rgb]{0,0,0}$\ell_2$}%
}}}}
\put(1201,239){\makebox(0,0)[lb]{\smash{{\SetFigFont{10}{12.0}{\rmdefault}{\mddefault}{\updefault}{\color[rgb]{0,0,0}$\ell_1$}%
}}}}
\put(4726,-1486){\makebox(0,0)[lb]{\smash{{\SetFigFont{10}{12.0}{\rmdefault}{\mddefault}{\updefault}{\color[rgb]{0,0,0}$\ell_{n-1}$}%
}}}}
\put(4051,-136){\makebox(0,0)[lb]{\smash{{\SetFigFont{10}{12.0}{\rmdefault}{\mddefault}{\updefault}{\color[rgb]{0,0,0}$\ell_n$}%
}}}}
\put(3751,-2311){\makebox(0,0)[lb]{\smash{{\SetFigFont{10}{12.0}{\rmdefault}{\mddefault}{\updefault}{\color[rgb]{0,0,0}$\ell_{n-2}$}%
}}}}
\put(976,-1561){\makebox(0,0)[lb]{\smash{{\SetFigFont{10}{12.0}{\rmdefault}{\mddefault}{\updefault}{\color[rgb]{0,0,0}$\ell_3$}%
}}}}
\end{picture}%
\caption{The triangulation associated to $\Ga_n$} \label{fig:pol_gamma_n}
\end{center}
\end{figure}

For the converse, we may assume without restriction that the lengths
$\ell_i$ do not vanish. Then the proof is by induction on $n$. For $n \geqslant 3$,
consider a $(n+1)$-tuple $(\ell_i)$ satisfying the parity condition and
assume
that there exists a family of vectors $(v_i)$ with lengths
$|v_i|=\ell_i$ and such that the sum $v_1+ \ldots + v_{n+1}$
vanishes. Then it suffices to prove that we can choose these vectors in
such a way that $$\ell'_n  = |v_n + v_{n+1} |$$ is integral and
$\ell'_n + \ell_n + \ell_{n+1}$ is even. Indeed, if it is the case,
one may apply the induction assumption to $(\ell_1, \ldots ,
\ell_{n-1}, \ell_n')$ and we get a admissible coloring $\varphi$ of
the graph $\Ga_n$ with $\varphi(i) = \ell_i$ for $i=1, \ldots , n-1$
and $\varphi (n) = \ell_n'$. Then we extend this coloring to an admissible
coloring of $\Ga_{n+1}$ by assigning $\ell_n$ and $\ell_{n+1}$ to the
$n$-th and $(n+1)$-th half-edges.

Let us prove the existence of the $v_i$ satisfying the extra condition. The minimum and
maximum values of $|v_n +v_{n+1}|$ when $v_n$ and $v_{n+1}$
run over the circles of radius $\ell_n$ and $\ell_{n+1}$ are   
$$ m = |\ell_n - \ell_{n+1}|, \qquad M = \ell_n +
\ell_{n+1} .$$
Similarly if $|v_i | =\ell_i$ for $i =1, \ldots , n-1$, the minimum and maximum of $|v_1 + \ldots + v_{n-1}|$ are
$$ m ' = \max  (2 L - ( \ell_1 + \ldots + \ell_{n-1}), 0 )  , \qquad M'= \ell_1 + \ldots +\ell_{n-1} $$
where $L$ is the maximum of $\ell_1, \ldots , \ell_{n-1}$.
Observe also that  $m < M $ and $m ' < M '$   because the
lengths $\ell_i$ are positive. 

Then the existence of the family $(v_i)$ such that $v_1 + \ldots +
v_{n+1} = 0$ is equivalent to the existence of a length $\ell'_n \in
[m , M ] \cap [m ' , M ' ]$. We have to prove that if this intersection
is non-empty then it contains an integer $\ell_n'$ such that $\ell'_n
+ \ell_n + \ell_{n+1}$ is even. Since the endpoints $m$, $M$,
$m'$ and $M'$ are all integral, if $[m , M ] \cap [m '
, M ' ]$ is not empty, it contains at least two consecutive integers
or it consists of one point. In this last case, one has $M =m'$ or $m
= M'$, hence $\ell'_n = m$ or $M$ which is integral and satisfies the parity condition.  
\end{proof}

\section{Semi-classical properties of $\Hilbert_{\ell}$}

Applying the geometric quantization procedure, we give a
geometric construction of the spaces $\Hilbert_{\ell}$. We recall
the general setting in a first section and explain in the next
one how the
spaces $\Hilbert_{\ell}$ are viewed as quantization of polygon
spaces. In the following sections, we define Toeplitz operators and
then prove that the operators $H_{a, \ell}$ introduced previously are
Toeplitz operators.

\subsection{Geometric quantization} \label{sec:gq}
Consider a compact connected symplectic
manifold $(M, \om)$. Assume that it is endowed with: 
\begin{itemize} 
\item 
a K{\"a}hler
structure whose fundamental form is $\om$,
\item a prequantization bundle
$L \rightarrow M$ , that is a holomorphic Hermitian line bundle whose
Chern curvature is $ \frac{1}{i} \om$, 
\item  a
half-form bundle $\delta \rightarrow M$, that is a square root of the
canonical bundle of $M$.
\end{itemize}
The quantum space associated to these data
is the space of holomorphic sections of $L \otimes \delta$ that we
denote by $H^0 (M,
L\otimes \delta)$. It has the scalar product 
$$ (\Psi_1, \Psi_2) = \int_M ( \Psi_1(x) , \Psi_2(x) )_{L_x \otimes
  \delta_x} \; \mu_M (x)$$ 
induced by the metric of $L \otimes \delta$ and the Liouville measure  $\mu_M = \om^n / n!$. 

Consider a Hamiltonian action of a Lie group $G$ on $M$ with momentum  $\mu : M
\rightarrow \Lie^*$. By definition, the momentum is equivariant and
satisfies  
 $$ d \mu^\xi + \om ( \xi ^ \# , . ) = 0, \qquad \forall \xi \in \Lie
 $$
 where $\xi ^\#$ is the infinitesimal vector field of $M$ associated
 to $\xi$. We assume that the action lifts to the prequantization
 bundle  in such a way that the infinitesimal action on the sections
 of $L$ is 
\begin{gather} \label{eq:lift_action}
 \nabla_{\xi^\#}^L + i \mu^\xi 
\quad \forall \xi \in \Lie .
\end{gather}
Furthermore we assume that the action on $M$ preserves the complex
structure and lifts to the half-form bundle. With these assumptions,
the group $G$ acts naturally on $H^0 (M, L \otimes \delta)$, the
infinitesimal action being given by the Kostant-Souriau operators
\begin{gather}  \label{eq:KS}    i \mu^\xi +   \nabla_{ \xi
  ^\# }^L \otimes \id + \id \otimes \dLie_{\xi^\#}  , \qquad
\forall \xi \in \Lie
\end{gather}
Here $\dLie$ is the Lie derivative of half-forms. 

All the irreducible representations of compact Lie groups may be obtained in
this way. Starting from the irreducible representations  of
$SU(2)$, we will construct the tensor product $V_{\ell_1} \otimes \ldots
\otimes V_{\ell_n}$ and its invariant subspace as quantum spaces
associated to some symplectic manifolds.

\subsection{Geometric realization of the space $\Hilbert_{\ell}$}
\label{sec:quant_geom}

Consider the tautological bundle $\Ol
(-1)$ of the projective complex line $\Proj^1 ( \C)$ and let $\Ol ( m) = \Ol (-1) ^{- m}$. These are $SU(2)$-bundles with base $\Proj^1( \C)$. For
 any integer $m \geqslant 1$, the induced $SU(2)$-action on
the  holomorphic sections of  
$$  \Ol (m) \otimes \Ol (-1) \rightarrow \Proj^1 ( \C)$$ 
is the irreducible representation $V_{m-1}$.
 Denote by $\om_{FS}$ the Fubiny-Study form of $\Proj^1 (\C)$. Then
 $\Ol (m)$ is a prequantization bundle with curvature $ \frac{1}{i}
 m \om_{FS}$. Furthermore $\Ol (-1)$ is the unique half-form bundle
 of $ \Proj^{1} ( \C)$. The $SU(2)$-action satisfies all the general
 properties stated below.  Its momentum 
$$\mu_m :
\Proj ^1(\C)  \rightarrow (\mathfrak{su} (2) )^* $$
is an embedding whose image is the coadjoint orbit with symplectic
volume  $2\pi m$. With our normalization of the scalar product, this coadjoint orbit is the
sphere $S^2_m$ with radius $m$ centered at the origin.  
In the following we identify $(\Proj ^1 ( \C), m \om_{FS})$
with $S^2_m$.

Next we consider the product $S^2_{\ell_1} \times \ldots \times
S^2_{\ell_n}$ with the prequantization bundle and the half-form bundle:
\begin{gather} \label{eq:fibres}
  \Ol ( \ell_1) \boxtimes \ldots \boxtimes \Ol ( \ell_n),  \qquad   \Ol ( -1) \boxtimes \ldots \boxtimes \Ol ( -1)
\end{gather}
The associated quantum space is the tensor product $V_{\ell_1-1}
\otimes \ldots \otimes V_{\ell_n-1}$. 
The diagonal action of $SU(2)$ is Hamiltonian with momentum 
$$ \mu = \pi_1^* \mu_{\ell_1} + \pi_2^* \mu_{\ell_2} +  \ldots +
\pi_n^* \mu_{\ell_n} ,$$
where $\pi_i$ is the projection from $S_{\ell_1} \times \ldots \times
S_{\ell_n}$ onto the $i$-th factor. 
It satisfies all the previous assumptions. 

The invariant subspace of $V_{\ell_1 -1} \otimes \ldots \otimes
V_{\ell_n -1}$  is the Hilbert space $\Hilbert _{\ell -1}$ we
introduced previously. It follows from the
"quantization commutes with reduction" theorem proved in \cite{GuSt}
that $\Hilbert _{ \ell -1}$ is the quantum space associated with the
symplectic quotient $\module_\ell$ of $S^2_{\ell_1} \times \ldots \times
S^2_{\ell_n}$ by  $SU (2)$. More precisely we assume that  
\begin{gather} \label{eq:hyp_existence}
\ell_j \leqslant \tfrac{1}{2} (\ell_1 +
\ldots + \ell_n ) , \qquad j=1, \ldots , n.
\end{gather}
and
\begin{gather} \label{eq:hyp2}
  \ell_1 \pm \ell_2 \pm \ldots \pm \ell_n \neq 0.
\end{gather}
for any possible choice of signs. The first assumption is equivalent
to the non-emptyness of $\module_\ell$. The second one is to
ensure that the quotient does not have any singularity. Indeed,
(\ref{eq:hyp2}) holds if and only if the  $SU(2)$-action on the null
set 
$\{ \mu =0 \} $ factorizes through a free $SO(3)$-action. So under
this assumption the
quotient $\module_{\ell}$ is a manifold. It inherits by reduction a K{\"a}hler
structure. We assume furthermore that  
\begin{gather}\label{eq:hyp1}
 \ell_1 + \ldots + \ell_n \in 2\Z \quad \text{ and } \quad n \in 2 \Z 
\end{gather}
Then the diagonal $SU(2)$-action on the line bundles
(\ref{eq:fibres}) factors through a $SO(3)$-action. The quotients of the restriction at $\{ \mu
=0 \}$ of these bundles are genuine line bundles with base $\module
_\ell $, that we denote by
$L_{\ell}$ and $\delta_{\ell}$ respectively. $L_{\ell}$ has a
natural structure of prequantization bundle and $\delta_{\ell}$ is
a half-form bundle. The restrictions at  $\{ \mu
=0 \}$ of the invariant sections descend to the quotient $\module_\ell$. This
defines a map 
 $$\Hilbert_{\ell -1} \rightarrow H^0 ( \module_\ell , L_{\ell}
\otimes  \delta_{\ell}) $$
which is a vector space isomorphism. Additional details on this
construction 
will be recalled in section \ref{sec:data_reduction}. Applying the
same construction to the power of the prequantization bundle, we obtain the following theorem. 
\begin{theo} \label{theo:geom-real-space}
Let $n \geqslant 4$ and $\ell = (\ell_1, \ldots ,
  \ell_n)$ be a family of positive integers satisfying
  (\ref{eq:hyp_existence}), (\ref{eq:hyp2}) and (\ref{eq:hyp1}). Then for any positive integer $k$, we have a
  natural vector space isomorphism 
$$ 
V_{k, \ell} :   \Hilbert_{k \ell -1 } \rightarrow H^0 (\module_\ell , L_{\ell}^k \otimes
\delta_{\ell} ). $$
\end{theo}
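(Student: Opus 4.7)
The plan is to invoke the construction already outlined in Section \ref{sec:quant_geom}, applied not to $\ell$ itself but to $k\ell := (k\ell_1, \ldots, k\ell_n)$, and then to identify the resulting reduced data with $(\module_\ell, L_\ell^k, \delta_\ell)$.

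First I would verify that $k\ell$ satisfies the three hypotheses of the theorem: (\ref{eq:hyp_existence}) and (\ref{eq:hyp2}) follow by rescaling, and (\ref{eq:hyp1}) holds because $n$ is even and $k(\ell_1 + \ldots + \ell_n)$ is even. Then, following the setup of Section \ref{sec:quant_geom}, the quantum space attached to $S^2_{k\ell_1} \times \ldots \times S^2_{k\ell_n}$ with prequantum and half-form bundles $\Ol(k\ell_1) \boxtimes \ldots \boxtimes \Ol(k\ell_n)$ and $\Ol(-1)^{\boxtimes n}$ is the tensor product $V_{k\ell_1 - 1} \otimes \ldots \otimes V_{k\ell_n - 1}$, whose $SU(2)$-invariant subspace is exactly $\Hilbert_{k\ell - 1}$. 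The ``quantization commutes with reduction'' theorem of Guillemin and Sternberg, as already invoked in the $k=1$ case, produces a natural isomorphism between $\Hilbert_{k\ell - 1}$ and the space of holomorphic sections of the reduced bundle over the symplectic reduction.

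The second step is to identify this reduced data with $(\module_\ell, L_\ell^k, \delta_\ell)$. The moment map for the diagonal $SU(2)$-action on $\prod_i S^2_{k\ell_i}$ equals $k$ times the moment map for the $\ell$-action, so the zero level sets coincide as subsets of $\prod \Proj^1(\C)$ and the symplectic quotient is set-theoretically the same as $\module_\ell$, equipped with $k$ times its symplectic form. Correspondingly, the restriction to $\mu^{-1}(0)$ of
\[
\Ol(k\ell_1) \boxtimes \ldots \boxtimes \Ol(k\ell_n) = \bigl( \Ol(\ell_1) \boxtimes \ldots \boxtimes \Ol(\ell_n) \bigr)^k
\]
descends to $L_\ell^k$, while the half-form bundle $\Ol(-1)^{\boxtimes n}$, being independent of the scaling, descends to $\delta_\ell$. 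Composing the Guillemin-Sternberg isomorphism with these identifications defines the map $V_{k,\ell}$.

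The main obstacle is the Guillemin-Sternberg isomorphism itself, which relies on two technical points: the $SU(2)$-action on $\mu^{-1}(0)$ must factor through a free $SO(3)$-action (ensured by (\ref{eq:hyp2})), and the bundle $\Ol(k\ell_1) \boxtimes \ldots \boxtimes \Ol(k\ell_n) \otimes \Ol(-1)^{\boxtimes n}$ restricted to $\mu^{-1}(0)$ must descend to the quotient (ensured by the parity conditions (\ref{eq:hyp1}), since $k\sum \ell_i$ is even and $n$ is even). These are the same verifications already carried out for $k=1$ in Section \ref{sec:quant_geom}, so no new technical difficulty arises; the detailed construction can be deferred to Section \ref{sec:data_reduction} as announced in the text.
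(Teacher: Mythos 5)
Your proposal is correct and takes essentially the same approach as the paper: apply the Guillemin--Sternberg ``quantization commutes with reduction'' theorem to the $k$-th power of the prequantization bundle (equivalently, to the multi-index $k\ell$), and then identify the reduced manifold, line bundle and half-form bundle with $(\module_\ell, L_\ell^k, \delta_\ell)$. One small point you leave implicit, and which the paper addresses in a remark after Theorem \ref{sec:GS_isomorphism}: the half-form version of the Guillemin--Sternberg isomorphism a priori holds only for $k$ sufficiently large, but here $L^k\otimes\delta = \Ol(k\ell_1-1)\boxtimes\ldots\boxtimes\Ol(k\ell_n-1)$ can be treated as a single (twisted) prequantum-type bundle, so the isomorphism in fact holds for every positive integer $k$.
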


Since the maps $V_{k,\ell}$ do not necessarily preserve the scalar
product, we will also consider the unitary operators $V_{k, \ell} (V_{k, \ell}^* V_{k,
  \ell})^{-1/2}$. The asymptotic results we prove in this paper are 
in the limit $k \rightarrow \infty$.
\begin{rem}
In the case where assumption (\ref{eq:hyp2}) does not hold, one can
still identify the Hilbert space $\Hilbert_{k \ell -1}$ with a space of
holomorphic sections on a K{\"a}hler analytic space \cite{Sj}. We will not consider
this case because the theory of Toeplitz operators has not been developed for singular spaces. 
The assumption (\ref{eq:hyp1}) is not
really necessary. Actually we could extend our results to the sequences 
$$ \Hilbert_{k
  \ell -1 + m }, \qquad k=1,2, \ldots $$ 
where $\ell$ and $m$ are any multi-indices satisfying (\ref{eq:hyp2}). Recall that $\Hilbert_{\varphi}
$ is a trivial vector space when  $ |\varphi | = \varphi (1) + \ldots +\varphi
 ( n) $ is odd. Assume that $|\ell|$ and $n$ are even. then we may
 consider without restriction that $|m|$ is even too. Let $K
 \rightarrow \module_\ell$ be the
 quotient bundle of the restriction at $\{ \mu =0 \}$ of $ \Ol ( m_1)
\boxtimes \ldots \boxtimes \Ol ( m_n)$. We have an isomorphism 
$$ \Hilbert_{k \ell -1 +
  m } \simeq  H^0 (\module_\ell , L_{\ell}^k \otimes
K \otimes
\delta_{\ell} ).$$
Then we can apply the methods used in this paper as it is
explained in \cite{oim_hf} and \cite{oim_mc}. 
When $|\ell|$ is even whereas $n$ and $|m|$ are odd, we can not define
globally the bundles $K$ and
$\delta_{\ell} $. But their tensor product is still perfectly
defined. When $|\ell|$ is odd, by doing the parameter change $k = 2 k'
+1$ or $k = 2k'$, we are reduced to the previous cases.   
Here to avoid the complications due to the auxiliary
bundle $K$, we consider uniquely the spaces $\Hilbert_{k
  \ell -1}$  
under the assumption that $|\ell|$ and $n$ are even. \qed
\end{rem}

\subsection{Toeplitz operators} \label{sec:toeplitz-operators}

Let $(M, \om)$ be a compact K{\"a}hler manifold with a prequantization
bundle $L$ and a half-form bundle $\delta$. For any integer $k$ and
any function $f \in
\Ci (M)$, we consider the rescaled Kostant-Souriau operator (cf. (\ref{eq:KS}))
$$ \preq_k (f) = f + \frac{1}{ik } \Bigl( \nabla^{L^k}_X \otimes \id +
\id \otimes \dLie_X^\delta \Bigr) : \Ci (M, L^k \otimes \delta) \rightarrow \Ci (M, L^k \otimes \delta) 
$$ 
where $X$ is the Hamiltonian vector field of $f$. 
These operators do not necessarily preserve the subspace of
holomorphic sections. Let us consider $H^0 (M ,  L^k \otimes \delta )$
as a subspace of the space of $L^2$-sections. Assume this last space
is  endowed with the scalar product induced by the
metrics of $L$, $\delta$ and the Liouville form. Introduce the
orthogonal projector $\Pi_k$ onto  $ H^0 (M, L^k \otimes \delta
)$. Then a Toeplitz operator is defined as a family of operators  
$$ \Bigl( T_k := \Pi_k \preq_k (f(\cdot, k)) + R_k : H^0 (M, L^k \otimes \delta
) \rightarrow H^0 (M, L^k \otimes \delta ) \Bigr)_{ k =1,2, \ldots}  $$ 
where $(f(\cdot, k))$ is a sequence of $\Ci (M)$ which admits an
asymptotic expansion 
$$ f(\cdot, k ) = f_0 + k^{-1} f_1 + k^{-2} f_2 + \ldots 
$$
for the $\Ci$-topology with $f_0$, $f_1 \ldots \in \Ci
(M)$. Furthermore $(R_k)$ is any operator such that  $\| R_k \| =
O(k^{-\infty})$. As a result the coefficients $f_\ell$ are
determined by $(T_k)$. We call $f_0$ the {\em principal} symbol of $(T_k)$
and $f_1$ its {\em subprincipal} symbol. 

\subsection{The Casimir operators} 
Consider now the symplectic quotient $\module _\ell$ and its
quantization as in section \ref{sec:quant_geom}. For any subset $I$ of $\{ 1, \ldots, n \}$, we
denote by $q_{I, \ell}$ the function 
\begin{gather} \label{eq:symbole_casimir}
 q_{I, \ell} := \Bigl| \sum_{i \in I} \pi_i^* \mu_{\ell_i} \Bigr|^2 \in \Ci (  S^2_{\ell_1} \times \ldots \times
S^2_{\ell_n} ) . 
\end{gather}
It is invariant and descends to a function $h_{I, \ell}$ of
$\Ci (\module_\ell)$. Recall that we defined in section \ref{sec:definition_Ha}  a Casimir
operator $Q_{I, \ell}$ which acts on the space $\Hilbert_\ell$ and
that we introduced  in section
\ref{sec:quant_geom} isomorphisms $V_k$ from $\Hilbert_{k \ell -1 }$ to $H^0 (\module_\ell , L_{\ell}^k \otimes
\delta_{\ell} )$. A central result of the paper is the following
theorem.

\begin{theo} \label{theo:main}
 Let $n \geqslant 4$ and $\ell = (\ell_1, \ldots ,
  \ell_n)$ be a family of positive integers satisfying
  (\ref{eq:hyp_existence}), (\ref{eq:hyp2}) and (\ref{eq:hyp1}). For
  any subset $I$ of $\{1,
    \ldots , n\}$, the sequence 
$$ \Bigl( \frac{1}{k^2} V_k Q_{I,k\ell-1 }V_k^{-1}  :  H^0 (\module_\ell , L_{\ell}^k \otimes
\delta_{\ell} )  \rightarrow   H^0 (\module_\ell , L_{\ell}^k \otimes
\delta_{\ell} )  \Bigr)_{k=1,2, \ldots }$$ 
is a Toeplitz operator with principal symbol $h_{I, \ell}$ and
vanishing subprincipal symbol. 
\end{theo}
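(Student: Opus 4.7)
The strategy is to realize $\tfrac{1}{k^2}Q_{I,k\ell-1}$ as a Toeplitz operator on the ambient product $M := S^2_{\ell_1}\times\cdots\times S^2_{\ell_n}$ and then push the statement down to $\module_\ell$ using the symplectic reduction of Toeplitz operators promised in the last section of the paper. The isomorphism $V_k$ is built from the reduction identification, so computing on $M$ and restricting to the invariant subspace gives directly the operator of the statement.

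\emph{Step 1: rewriting $Q_I$ in prequantum terms.} From the geometric construction of section \ref{sec:quant_geom} and formula (\ref{eq:KS}), applied to the $k$-th power of $L = \Ol(\ell_1)\boxtimes\cdots\boxtimes\Ol(\ell_n)$ and the half-form $\delta = \Ol(-1)^{\boxtimes n}$, the infinitesimal action of $\xi\in\mathfrak{su}(2)$ through the representation $\rho_{I,k\ell-1}$ on $H^0(M,L^k\otimes\delta)$ is exactly $ik\,\preq_k(\mu_I^\xi)$, where $\mu_I := \sum_{i\in I}\pi_i^*\mu_{\ell_i}$. Plugging this into the definition (\ref{eq:Casimir2}) of the Casimir and summing over an orthonormal basis $(\xi_a)$ of $\mathfrak{su}(2)$ gives
\[
\frac{1}{k^2}\,Q_{I,k\ell-1} \;=\; \sum_{a=1}^{3}\,\preq_k\!\bigl(\mu_I^{\xi_a}\bigr)^{2},
\]
as an operator on $V_{k\ell_1-1}\otimes\cdots\otimes V_{k\ell_n-1} = H^0(M,L^k\otimes\delta)$.

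\emph{Step 2: Toeplitz analysis on $M$.} I would then invoke the standard composition calculus for Kostant--Souriau operators: $\Pi_k\preq_k(f)\Pi_k$ is a Toeplitz operator with principal symbol $f$ and vanishing subprincipal symbol (the half-form precisely kills the $1/k$ correction), while $\Pi_k\preq_k(f)\preq_k(g)\Pi_k$ is a Toeplitz operator with principal symbol $fg$ and an explicit subprincipal correction $c(f,g)$ built from the $(1,0)$-derivatives of $f$ and $g$ with respect to the K\"ahler metric. Therefore $\Pi_k \tfrac{1}{k^2}Q_{I,k\ell-1}\Pi_k$ is Toeplitz on $M$ with principal symbol $\sum_a (\mu_I^{\xi_a})^2 = |\mu_I|^2 = q_{I,\ell}$. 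The subprincipal symbol is $\sum_a c(\mu_I^{\xi_a},\mu_I^{\xi_a})$; this sum vanishes because the momentum components $\mu_I^{\xi_a}$ are the coordinates of an equivariant momentum map and one is tracing over an $\Ad$-invariant inner product on $\mathfrak{su}(2)$, so any correction term transforms by $\aad^*$ and sums to zero.

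\emph{Step 3: reduction to $\module_\ell$.} The operator built in Step 2 is $SU(2)$-equivariant (its symbol $q_{I,\ell}$ is manifestly $SU(2)$-invariant, and each $\preq_k(\mu_I^{\xi_a})^2$ is invariant because $\mu_I$ is equivariant). The reduction theorem for Toeplitz operators, stated and proved in the final section of the paper, asserts that the restriction of an invariant Toeplitz operator from $H^0(M,L^k\otimes\delta)^{SU(2)}$ to $H^0(\module_\ell,L_\ell^k\otimes\delta_\ell)$ under $V_k$ is Toeplitz on $\module_\ell$, with principal symbol the descent of the ambient principal symbol, and a controlled formula for the subprincipal. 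Applied here, the principal symbol becomes $h_{I,\ell}$, and since the ambient subprincipal symbol already vanishes in Step 2, the reduction formula shows that the reduced subprincipal symbol also vanishes.

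\emph{Main obstacle.} The delicate point is the subprincipal symbol: first the vanishing of $\sum_a c(\mu_I^{\xi_a},\mu_I^{\xi_a})$ on $M$, which uses crucially the equivariance of $\mu_I$ and $\Ad$-invariance of the basis, and then the subprincipal formula under reduction. The latter is what the author advertises as the most technical part of the paper, because taking a symplectic quotient produces extra $1/k$ contributions (from the curvature of the reduction in the K\"ahler direction and from the half-form descent) that must be identified and seen to cancel for the specific operator at hand.
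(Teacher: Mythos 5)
Your plan follows the same three-step architecture as the paper: realize $\tfrac{1}{k^2}Q_{I,k\ell-1}$ via Kostant--Souriau operators on the product of spheres, argue that it is a Toeplitz operator on $S^2_{\ell_1}\times\cdots\times S^2_{\ell_n}$ with symbol $q_{I,\ell}$, then push down through the semiclassical reduction developed in the last section. Steps 1 and 3 are essentially the paper's argument, and the formula $\tfrac{1}{k^2}Q_{I,k\ell-1} = \sum_a \preq_k(\mu_I^{\xi_a})^2$ is correct.

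The problem is your Step 2 justification for the vanishing of the subprincipal symbol on the ambient manifold. You invoke a ``correction $c(f,g)$ built from the $(1,0)$-derivatives of $f$ and $g$ with respect to the K\"ahler metric'' and argue that $\sum_a c(\mu_I^{\xi_a},\mu_I^{\xi_a})$ vanishes ``because \ldots one is tracing over an $\Ad$-invariant inner product on $\mathfrak{su}(2)$, so any correction term transforms by $\aad^*$ and sums to zero.'' Neither part is right. In the half-form-corrected Toeplitz calculus used here (see \cite{oim_hf}, cited in the sketch following Theorem \ref{theo:main}), the subprincipal symbol of a product $T_kS_k$ with principal symbols $g_0,h_0$ and subprincipal symbols $g_1,h_1$ is $g_0 h_1 + h_0 g_1 + \tfrac{1}{2i}\{g_0,h_0\}$; the Kähler-metric-dependent correction you allude to is exactly what the half-form bundle is there to absorb, and this is one of the structural reasons it is included. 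Moreover, the ``$\Ad$-invariance forces the sum to vanish'' step is not a valid deduction: a sum of $\Ad$-equivariant scalars over an orthonormal basis of $\mathfrak{su}(2)$ is merely $SU(2)$-invariant, not zero (the Casimir $\sum_a(\mu^{\xi_a})^2=|\mu|^2$ itself is such a nonzero sum). The correct, and much shorter, reason is term-by-term: each $\preq_k(\mu^{\xi_a})$ already preserves $H^0$ because it generates the (finite-dimensional) group action, so as an operator on $H^0$ it coincides with $\Pi_k\preq_k(\mu^{\xi_a})$ with the multiplier $\mu^{\xi_a}$ independent of $k$, hence has vanishing subprincipal symbol by definition; then the subprincipal of its square is $2\mu^{\xi_a}\cdot 0 + \tfrac{1}{2i}\{\mu^{\xi_a},\mu^{\xi_a}\}=0$, with no summation identity required. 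You should replace the $\Ad$-invariance argument with this, and drop the reference to a metric-dependent correction $c(f,g)$, which contradicts the half-form calculus the paper builds on.
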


Let us sketch the proof. First it follows from the results of \cite{BoGu} that
the product of two Toeplitz operators $(T_k)$ and $(S_k)$ is a Toeplitz operators. The
principal and subprincipal symbols $f_0$, $f_1$ of $(T_k S_k)$ may be
computed in terms of the symbols $g_0, g_1$ of $(T_k)$ and $h_0, h_1$ of $(S_k)$ 
$$ f_0 = g_0 h_0 , \qquad f_1 = g_0 h_1 + h_0 g_1 + \frac{1}{2i}  \{
f_0 , g_0 \}, $$
cf. \cite{oim_hf} for a proof of the second formula. 
Second, in the case of a Hamiltonian action $G$ with momentum $\mu$
which satisfies all the assumptions of section
\ref{sec:gq}, the infinitesimal action of $\xi \in \mathfrak{g}$
on $H^0 (M,
L^k \otimes\delta)$ is the Kostant-Souriau operator $ik \preq (
\mu^\xi)$. Since it preserves $H^0 (M, L^k \otimes \delta)$, the
sequence $(\preq (
\mu^\xi))_k$ is a Toeplitz operator. Its principal symbol is $\mu^\xi$
and its subprincipal symbol vanishes. 

Using these two general facts, we deduce from the expression
(\ref{eq:Casimir2}) of the
Casimir operator that the sequence 
$$  \Bigl( \frac{1}{k^2} Q_{I, k\ell-1} : V_{k\ell_1 -1 } \otimes \ldots \otimes V_{ k\ell_n -1}
\rightarrow   V_{k\ell_1 -1 } \otimes \ldots \otimes V_{ k\ell_n -1}
\Bigr)_{ k=1,2, \ldots} $$
is a Toeplitz operator of $S_{\ell_1}^2 \times \ldots \times
S_{\ell_n}^2$. With our normalization for the scalar product of
$\mathfrak{su}(2)$,  its principal symbol is $q_{I,\ell}$. Its
subprincipal symbol vanishes. 

Last step of the proof is the symplectic reduction from $S_{\ell_1}^2 \times \ldots \times
S_{\ell_n}^2 $ to $\module_{\ell}$. We will show in part
\ref{sec:reduct-semi-class} that any invariant Toeplitz operator of a
Hamiltonian space $(M, G)$ descends to a Toeplitz operator of the
quotient $M \varparallel G$. This results was
already proved in \cite{oim_qr} for torus action and the proof given there extends
directly to the case of a compact Lie group. But it does not
give any control on the subprincipal symbol. So we propose
another proof and we will show that the principal
and subprincipal symbols descend to the principal and subprincipal
symbols of the reduced Toeplitz operator. The precise statement is the
corollary \ref{the_corollaire}, which implies theorem
\ref{theo:main}.

\section{On the symplectic geometry of polygon spaces}  \label{sec:sympl-geom-polyg}

Let $n \geqslant 3$ and $(\ell_1, \ldots, \ell_n)$ be a family of positive 
numbers, not necessarily integral. We assume that 
\begin{gather} \label{eq:ineq_stric}
\ell_j < \frac{1}{2} (\ell_1 + \ldots +
\ell_n)
\end{gather}
for any $j =1 , \ldots , n$. Recall that $S^2_{\ell}$ is the
sphere of $\mathfrak{su}(2)^*$ with radius $\ell$. It is the coadjoint
orbit with symplectic volume $2 \pi \ell$.  
Consider the symplectic quotient 
$$ \module_{\ell} = \bigl\{ (x_1, \ldots , x_n ) \in S^2_{\ell_1}
\times \ldots \times S^2 _{\ell_n} / \; x_1 + \ldots + x_n = 0
\bigr\}/ SU(2) $$
Since we do not assume (\ref{eq:hyp2}), $\module_\ell$ may have some
singularities. They form a finite set $S_\ell$ consisting of the classes
$[(x_1, \ldots , x_n)]$ such that all the $x_i$ belong to the same line. $\module_{\ell}
\setminus S_{\ell}$ is a symplectic manifold of dimension $2(n-3)$. It
is non-empty because of the inequalities (\ref{eq:ineq_stric}). 

In section \ref{sec:integrable-system}, we associate to each
admissible graph an integrable system of $\module_\ell$ and give its
properties. The motivation is to obtain next the best description of
the joint eigenstates of the operators $(H_{a, \ell})$. 
In section  \ref{sec:bending-flows-action}, we make clear the
relationship with the decomposition of polygons into triangles. We
also introduce natural action-angle coordinates of the integrable
system. The proofs of the various results are postponed to the next sections.

\subsection{An integrable system}\label{sec:integrable-system}

Let $\Ga$ be an admissible graph with $n$ half-edges. Assume that the internal edges are oriented. Then
for any internal edge $a$, denote by $I(a)$ the set of half-edges
which are connected to the initial vertex of $a$ by a path which
doesn't contain the terminal vertex of $a$. Introduce the length function $\la_a$ of
$\module_{\ell}$ defined by
$$ \la_{a} ([x_1, \ldots , x_n]) = \bigl| \sum _{i \in I(a)} x_{i} \bigr| $$
and denote by $h_a$ the square of $\la_a$. Observe that $\la_a$
is defined independently of the direction of $a$. Using this, it is
easy to see that the Poisson bracket $\{ h_a , h_b \} = 0$ vanishes
for any two internal edges. 

Denote by $\module ^*_{\ell, \Ga}  $ the subset of $\module_\ell
\setminus S_{\ell}$ where none of the functions
$\la_a$ vanishes. The functions $\la_a$ are smooth on
$\module ^*_{\ell, \Ga}$ and they mutually Poisson commute.
To describe the joint image of the $\la_a$, we introduce some more
notations.

Let us denote by $E (\Ga)$  the set of edges of
$\Ga$.  Let $\De(\Ga)$ be the convex polyhedron of $\R^{E (\Ga)}$ which
consists of the $(2n-3)$-tuples $(d_a )$ of non-negative real numbers
such that for any three mutually distinct edges $a$, $b$ and $c$
incident to the same vertex, $d_a$, $d_b$ and $d_c$ are the lengths of
an Euclidean triangle, that is the inequalities 
 $|  d_a - d_b | \leqslant d_c \leqslant d_a + d_b$
hold. Let $\De(\ell, \Ga)$ be the convex polyhedron of $\R^{\Ei
  (\Ga)}$ 
$$ \De(\ell, \Ga) = \{ (d_a)_{a\in \Ei
  (\Ga)} / \;  (\ell_i, d_a)_{i \in \Eh (\Ga), a \in \Ei (\Ga)} \in
  \De (\Ga) \} $$ 
The following theorem will be proved in section  \ref{sec:proof-int-syst}. 

\begin{theo} \label{theo:int_system}
Let $n\geqslant 4$ and let $\Ga$ be an admissible graph
  with $n$ half-edges. Let $(\ell_i)_{i =1, \ldots, n}$ be a family of
  positive real numbers satisfying (\ref{eq:ineq_stric}).  The image of the map 
$$ \la : \module_\ell \rightarrow \R^{\Ei(\Ga)}, \quad x \rightarrow
(\la_a (x) ) $$
is the polyhedron $\Delta(\ell, \Ga)$. The fibers of $\la$ are
connected. 
Denote by $\Int (\De(\ell, \Ga))$ the interior of $\Delta (\ell,
\Ga)$. Then the  set $$\mreg_{\ell, \Ga} :=\la ^{-1} (\Int \De(\ell, \Ga))$$ is
open and dense in
$\module_\ell$ and included in $\module_{\ell,\Ga}^*$. For any $x \in \mreg_{\ell, \Ga}$, the differentials
$d_x \la_a$, ${a\in \Ei (\Ga)}$, are linearly independent.
\end{theo}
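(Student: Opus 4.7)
The strategy is to exploit the Kapovich-Millson bending flows. On the open subset $\module^*_{\ell,\Ga}$ where $\la_a$ is smooth, the Hamiltonian flow of $\la_a$ is the $2\pi$-periodic rotation of the half of the polygon indexed by $I(a)$ about the axis $y_a := \sum_{i\in I(a)} x_i$. From $\{h_a,h_b\} = 0$ and smoothness one infers $\{\la_a,\la_b\} = 0$ on $\module^*_{\ell,\Ga}$, so these flows commute and combine into a Hamiltonian $T^{\Ei(\Ga)}$-action on $\module^*_{\ell,\Ga}$ with moment map $\la$.

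The inclusion $\la(\module_\ell)\subseteq \De(\ell,\Ga)$ is obtained vertex by vertex: at any trivalent vertex $t$ of $\Ga$ with the three incident edges oriented outward from $t$, the associated index sets partition $\{1,\ldots,n\}$, so the three vectors $y_e$ sum to zero and their norms --- which are precisely the $\la_e$ (with the convention $\la_e=\ell_i$ on a half-edge) --- satisfy the triangle inequalities. The reverse inclusion is proved by induction on $n$. For $n=4$ one obtains the polygon by gluing two Euclidean triangles along a common diagonal of length $d_a$. For $n \geqslant 5$ one picks a \emph{cherry} of $\Ga$, i.e.\ a trivalent vertex $t$ two of whose incident edges are half-edges $i$ and $j$; such a vertex exists in every trivalent tree. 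Removing $t$ produces an admissible graph $\Ga'$ with $n-1$ half-edges in which the third edge at $t$ becomes a half-edge; apply the induction hypothesis to $\Ga'$ with the datum obtained by replacing the pair $\ell_i,\ell_j$ by $d_a$, and then replace the side of length $d_a$ of the resulting $(n-1)$-polygon by the two sides of the Euclidean triangle with sides $\ell_i,\ell_j,d_a$, which exists since $(d_a)\in\De(\ell,\Ga)$.

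Linear independence of the $d_x\la_a$ at any $x\in \mreg_{\ell,\Ga}$ is then equivalent to local freeness of the $T^{\Ei(\Ga)}$-action at $x$, since $\la$ is its moment map. Local freeness is the crux: an element $(\theta_a)$ of the torus fixes a configuration iff the simultaneous partial rotations by angles $\theta_a$ about the axes $y_a$ leave the polygon globally invariant. Because at a point of $\mreg_{\ell,\Ga}$ every triangle of the associated decomposition is non-degenerate and every axis $y_a$ is non-zero, an induction starting from the leaves of $\Ga$ and working inward forces each $\theta_a$ to be a multiple of $2\pi$. Openness of $\mreg_{\ell,\Ga}$ is immediate from continuity of $\la$ and openness of $\Int(\De(\ell,\Ga))$, while density follows because the vanishing locus of each $\la_a$ has positive codimension in $\module_\ell$.

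Finally, connectedness of the fibers is obtained from the Atiyah--Guillemin--Sternberg convexity theorem applied to the moment map $h=(h_a):\module_\ell\setminus S_\ell\to\R^{\Ei(\Ga)}$, whose components are globally smooth; since $\la_a=\sqrt{h_a}$, the fibers of $\la$ and $h$ coincide. The main obstacle I anticipate is the verification of local freeness at regular points --- essentially checking that no non-trivial combination of partial bendings about the diagonals of the triangulation can act trivially --- and, to a lesser extent, the careful handling of fibers over the boundary of $\De(\ell,\Ga)$, where some bending axes degenerate and the torus action becomes singular.
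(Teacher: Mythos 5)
Your general architecture (vertex-by-vertex inclusion, induction on a cherry of $\Ga$ for the reverse inclusion, bending flows) is close to what the paper does, but two of your steps contain genuine gaps.

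\textbf{Connectedness of the fibers.} You propose to apply the Atiyah--Guillemin--Sternberg convexity theorem to $h=(h_a)$. This does not apply: the functions $h_a$ Poisson-commute, but they do \emph{not} form the moment map of a torus action. The flow of $h_a$ has period $\pi/\la_a$, which depends on the point, so exponentiating the $h_a$ does not give a $T^{n-3}$-action. The flows of the $\la_a$ \emph{are} periodic, but the $\la_a$ are only smooth on the open set $\module^*_{\ell,\Ga}$ where none of the bending axes vanish, and the circle actions do not extend continuously across the vanishing loci (the unit vector $\mu_{I(a)}/|\mu_{I(a)}|$ is discontinuous there). Indeed for $n\geqslant 5$ the polygon spaces are famously \emph{not} toric. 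On top of this, when (\ref{eq:hyp2}) fails, $\module_\ell\setminus S_\ell$ is non-compact. So neither the hypotheses nor the conclusion of AGS are in place, and connectedness needs a direct argument. The paper proves it upstairs on $H=\{x_1+\cdots+x_n=0\}$ by an explicit path construction along the same cherry induction you use for the image: given $\underline\la(x^0)=\underline\la(x^1)$, one first deforms the truncated configuration $(x_1,\ldots,x_{n-1},-y)$ inside a fiber of $\underline\la'$, transports $(x_n,x_{n+1})$ by the $SU(2)$-element matching $y^0$ to $y^1$, and then closes the gap using the base case $n=3$ while keeping $x_n+x_{n+1}$ fixed.

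\textbf{Density of $\mreg_{\ell,\Ga}$.} You argue that density follows ``because the vanishing locus of each $\la_a$ has positive codimension.'' But the complement of $\mreg_{\ell,\Ga}$ is $\la^{-1}(\partial\De(\ell,\Ga))$, and the boundary of $\De(\ell,\Ga)$ is cut out by equalities in the triangle inequalities, not only by $d_a=0$. For instance a fiber over $d_a=d_b+d_c$ (a degenerate triangle at some vertex) is not covered by your argument, and the functions $\la_a$ are not smooth there, so the naive regular-value reasoning does not apply. The paper handles this with a separate density proposition, proved by induction on $n$: after discarding the codimension-$\geqslant 1$ set of collinear configurations, one perturbs $\ell'_n=|x_n+x_{n+1}|$ to make the truncated weight vector satisfy the strict inequalities, using that the forbidden values of $\ell'_n$ form a finite set and that the sum map is submersive when the first $n-1$ vectors are not collinear.

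The remaining steps are sound and, modulo those two gaps, your route is a reasonable alternative: deriving independence of the $d_x\la_a$ from local freeness of the torus action at points of $\mreg_{\ell,\Ga}$ (which is the paper's Proposition in section 4.4, used there for Theorem \ref{sec:tore_action}) is logically equivalent to the paper's direct submersivity computation and would work once connectedness and density are established independently. Note, however, that the paper's proof of that local-freeness proposition itself invokes $\underline\la(x)\in\Int\De(\Ga)$, so deducing \emph{that} without having the polyhedral description of the image in hand is delicate; the paper avoids circularity by proving the image and regularity statements first, upstairs on $H$, before touching the torus action.
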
 
 This result is strongly related to theorem  \ref{sec:spectre} about the
 spectrum of the operators $(H_a)$. The admissible colorings of the
graph $\Ga$ are particular points of the polyhedron $\De ( \Ga)$. The
relationship between a color $\varphi (a)$ and the eigenvalue
$\varphi(a) ( \varphi (a) +1 )$ is similar to the relationship between
$\la_a$ and its square $h_a$. 
These analogies will be made more precise in part
\ref{sec:bohr-somm-cond} where we shall
state the Bohr-Sommerfeld conditions.   
To prepare the description of the joint eigenstates of the $(H_{a, \ell})$ as
Lagrangian sections, we explain now how the Hamiltonian flow of the
$\la_a$ defines a torus action. The following
result is proved in section \ref{sec:proof-tore-action}

\begin{theo} \label{sec:tore_action}
Under the same assumptions as theorem \ref{theo:int_system},  
the Hamiltonian flows on $\module ^*_{\ell, \Ga}$ of the $\frac{1}{2} \la_a$, $a
\in \Ei(\Ga)$,  mutually commute and are $2
\pi$-periodical. Con\-se\-quently they define a torus action 
$$ \tore^{\Ei(\Ga)} \times \module^*_{\ell, \Ga} \rightarrow \module
^*_{\ell, \Ga} $$
This action is free over $\mreg_{\ell, \Ga} $. The fibres of $\la$ in $\mreg_{\la, \Ga}$ are
the torus orbits.
\end{theo}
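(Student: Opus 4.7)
The plan is to lift the problem to the product of coadjoint orbits $M = S^2_{\ell_1} \times \ldots \times S^2_{\ell_n}$ and realize the flows of $\tfrac{1}{2}\la_a$ as Kapovich--Millson bending flows. On $M$ we have $h_a = |\mu_{I(a)}|^2$, where $\mu_{I(a)} = \sum_{i\in I(a)} \pi_i^* \mu_{\ell_i}$ is the moment map of the $SU(2)$-action on the $I(a)$-factors; so $\la_a = |\mu_{I(a)}|$ is smooth precisely where $y_a := \sum_{i\in I(a)} x_i \neq 0$, i.e.\ on $\module^*_{\ell,\Ga}$.

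Standard Hamiltonian theory gives that the flow of $\tfrac12 |\mu_{I(a)}|^2$ at time $s$ acts on $M$ by $x_i \mapsto \mathrm{Ad}^*(\exp(s y_a))(x_i)$ for $i \in I(a)$ and trivially on the complement, fixing both $y_a$ and $\sum_i x_i$. Since $\la_a$ is conserved along its own flow, $X_{\la_a/2} = \tfrac{1}{2\la_a} X_{h_a/2}$, so the flow of $\tfrac{1}{2}\la_a$ at time $t$ acts by $\mathrm{Ad}^*(\exp((t/2)\hat y_a))$ where $\hat y_a = y_a/\la_a$ has unit norm in $\mathfrak{su}(2)$. Under the normalization $\langle \xi, \eta \rangle = -\tfrac{1}{2}\mathrm{tr}(\xi \eta)$, a unit element of $\mathfrak{su}(2)$ acts via the adjoint representation as a rotation of angle $2t$, so the effective rotation of our flow is of angle $t$ and the flow is $2\pi$-periodic. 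Because it preserves $\mu = 0$ and commutes with the diagonal $SU(2)$-action, it descends to a $2\pi$-periodic flow on $\module^*_{\ell,\Ga}$.

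For any two distinct internal edges $a, b$, the tree-like structure of $\Ga$ (connected and acyclic) allows orienting them so that $I(a) \cap I(b) = \varnothing$: removing $a$ and $b$ splits $\Ga$ into three components, and $I(a)$, $I(b)$ can be placed in two distinct outer ones. Then $h_a$ and $h_b$ depend on disjoint tensor-factors of $M$ and Poisson commute; the same follows for $\la_a, \la_b$ on $\module^*_{\ell,\Ga}$, so their Hamiltonian flows commute. Combined with the previous step, the commuting $2\pi$-periodic flows of the $\tfrac{1}{2}\la_a$ assemble into the claimed action $\tore^{\Ei(\Ga)} \times \module^*_{\ell,\Ga} \to \module^*_{\ell,\Ga}$.

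On $\mreg_{\ell,\Ga}$, Theorem \ref{theo:int_system} gives linear independence of the $d\la_a$, hence local freeness of the action. Each orbit is therefore open in the $\la$-fiber containing it, and compact hence closed; connectedness of fibers (again Theorem \ref{theo:int_system}) forces each fiber to coincide with a single orbit. For actual freeness, I would first show that each elementary flow $\Phi^a_t$ has smallest period exactly $2\pi$: if $\Phi^a_t$ fixes a class $[x] \in \mreg_{\ell,\Ga}$, a lift produces $g \in SU(2)$ with $g \cdot x_i = R_t(x_i)$ for $i \in I(a)$ and $g \cdot x_j = x_j$ for $j \notin I(a)$, where $R_t$ is rotation by angle $t$ about $y_a$. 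The strict triangle inequalities characterizing $\mreg_{\ell,\Ga}$ ensure that no triangle of the triangulation is degenerate, which implies that neither $\{x_j : j \notin I(a)\}$ nor $\{x_i : i \in I(a)\}$ can be collinear; this rules out both $g = e$ (which would require the $I(a)$-vectors to lie on the axis $y_a$) and $g \neq e$ (which would require the complementary vectors to lie on the axis of $g$), forcing $t \equiv 0 \pmod{2\pi}$. The main obstacle is upgrading this to the multi-parameter stabilizer: the flows commute on the quotient but their lifts to $M$ generally do not, so one has to track which $SU(2)$-element realizes a given composite. I would handle this by orienting all internal edges outward from a chosen root, making the family $(I(a))_a$ laminar, and processing edges from the leaves inward, so that the composite lift can be written as an iterated bending that reduces the stabilizer analysis to the one-edge case.
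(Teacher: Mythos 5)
Your proposal follows essentially the same route as the paper for most of the theorem. The flow computation (via the fact that $|\mu|$ generates $\exp(t\mu/|\mu|)$, which is the paper's Proposition \ref{prop:flot_norm}), the $2\pi$-periodicity of the flow of $\tfrac{1}{2}\la_a$ via the factorization of the adjoint action through $SO(3)$, the commutativity of the flows via choosing orientations so that $I(a)\cap I(b)=\varnothing$, and the ``fibers $=$ orbits'' conclusion from dimension counting plus connectedness of the fibers are all in line with the paper. For the last point you only use local freeness (which already follows from the linear independence of the $d\la_a$), so that part is fine.

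The genuine gap is in the proof of freeness of the torus action on $\mreg_{\ell,\Ga}$. This is precisely the nontrivial part of the theorem and the bulk of the paper's proof: the paper formulates and proves, by induction on the number of half-edges, that if a composite $\rho_{I(a_1),t_1}\circ\cdots\circ\rho_{I(a_{n-3}),t_{n-3}}(x) = g.x$ for some $g\in SU(2)$ and some $x$ with $\underline{\la}(x)\in\Int\De(\Ga)$, then every $t_i\equiv 0 \pmod{\pi}$. The inductive step peels off a vertex incident to two half-edges, uses the splitting $\Ga\mapsto(\Ga',\Ga'')$ already set up in the proof of Theorem \ref{theo:image}, and carefully propagates the group element $g$ through the factorization. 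You explicitly flag the obstacle (``the flows commute on the quotient but their lifts to $M$ generally do not'') and sketch a laminar/root-to-leaf traversal, but this is not carried out, and it is not obvious that it simplifies to ``iterated applications of the one-edge case'': nested $I(a)\subsetneq I(b)$ make the lifted flows genuinely noncommutative, and the conjugating $SU(2)$-elements accumulate in a way that still requires an argument. In particular the one-edge stabilizer analysis you give does not by itself imply the multi-parameter statement. A secondary, smaller point: your one-edge argument relies on the claim that for $\underline{\la}(x)\in\Int\De(\Ga)$ neither $\{x_i:i\in I(a)\}$ nor its complement is a collinear family; this is true, but it deserves a word of justification (each side of the tree $\Ga\setminus\{a\}$ contains a vertex carrying two half-edges, at which the strict triangle inequality kills collinearity), and in the paper this is absorbed into the induction rather than invoked directly.
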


As we will see in the next section, there exist natural angle
functions which form together with the $\frac{1}{2} \la_a$
an action-angle coordinate system. 
When the $\ell_i$ are integers and satisfy the
condition (\ref{eq:hyp1}), there exist other remarkable action
coordinates $\ga_a$ associated to the prequantum bundle $L_{\ell}$. They are
defined modulo $\Z$ by the condition that $ e ^{2i \pi \ga_a (x) }$ is
the holonomy in $L_{\ell}$ of the loop 
$$ t \in [0, \pi] \rightarrow \Phi_{a,t} (x)$$ 
where $\Phi_{a,t}$ is the Hamiltonian flow of $\la_a$ at time $t$.
We will prove in part \ref{sec:comp-holon} the following 
\begin{prop}  \label{sec:action}
For any internal edge $a$, we have $\ga_a = \frac{1}{2} \bigl( \la_a + \sum_{ i \in I(a)}  \ell_i
\bigr)$. 
\end{prop}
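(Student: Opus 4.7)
I will work on the pre-quotient $N := S^2_{\ell_1}\times\cdots\times S^2_{\ell_n}$ with its prequantum bundle $L' := \Ol(\ell_1)\boxtimes\cdots\boxtimes\Ol(\ell_n)$, lift the Hamiltonian loop of $\la_a$ to $N$ as an $SU(2)$-orbit, and compute the holonomy of $L'$ along the lifted loop via the Kostant-Souriau formula. For $g\in SU(2)$ and $y\in N$, write $g\cdot_I y$ for the action of $g$ on the $I(a)$-factors and the identity elsewhere. Fix $\tilde x\in\mu^{-1}(0)$ over $x$ and identify $\mathfrak{su}(2)^*\cong\mathfrak{su}(2)$ via the inner product, obtaining the unit vector $u := \mu_I(\tilde x)/|\mu_I(\tilde x)|$. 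A direct computation using $\la_a = |\mu_I|$ gives $d\la_a = \langle d\mu_I,\mu_I/|\mu_I|\rangle$, so the Hamiltonian vector field of $\la_a$ on $N$ at a point $y$ is the infinitesimal generator of $e^{t\mu_I(y)/|\mu_I(y)|}\cdot_I$; since $u$ fixes $\mu_I(\tilde x)$ under the coadjoint action, this ratio is constant along $\tilde\gamma(t) := e^{tu}\cdot_I\tilde x$, so $\tilde\gamma$ is the Hamiltonian flow through $\tilde x$, and it lies in $\mu^{-1}(0)$. Because $u$ has unit norm, $e^{\pi u} = -I\in SU(2)$, which is the nontrivial element in the kernel of $SU(2)\to SO(3)$ and so acts trivially on each $S^2_{\ell_i}$. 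Hence $\tilde\gamma(\pi) = \tilde x$: the lifted loop is already closed in $N$, and the holonomy of $\gamma_a(x)$ in $L_\ell$ coincides with the holonomy of $\tilde\gamma$ in $L'$.

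Now apply the Kostant-Souriau formula (\ref{eq:lift_action}) to the one-parameter subgroup $t\mapsto e^{tu}\cdot_I$ of the lifted $SU(2)$-action on $L'$: its infinitesimal action on sections is $A := \nabla_{u^\#_I} + i\langle\mu_I,u\rangle$. The time-$\pi$ operator $e^{\pi A}$ equals the pushforward by the group element $-I\in SU(2)$ acting on $I(a)$-factors only, so on the fiber $L'_{\tilde x}$ it is multiplication by the scalar $(-1)^{\sum_{i\in I(a)}\ell_i}$: indeed $-I$ acts by $-1$ on the tautological bundle $\Ol(-1)$ and hence by $(-1)^{\ell_i}$ on each $\Ol(\ell_i)$. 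Independently, because $\la_a$ is constant along its own flow, $\nabla_{X_{\la_a}}$ and $i\la_a$ commute along $\tilde\gamma$, and solving the transport equation for $e^{\tau A}$ in a parametrized parallel frame along $\tilde\gamma$ shows that the same scalar $e^{\pi A}|_{\tilde x}$ equals $e^{i\pi\la_a(\tilde x)}\cdot\hol_{L'}(\tilde\gamma)^{-1}$.

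Equating the two expressions for $e^{\pi A}|_{\tilde x}$ yields $\hol_{L_\ell}(\gamma_a(x)) = e^{i\pi(\la_a(x) + \sum_{i\in I(a)}\ell_i)}$, which, by the definition $e^{2i\pi\ga_a(x)}=\hol_{L_\ell}(\gamma_a(x))$, gives $\ga_a = \tfrac{1}{2}(\la_a + \sum_{i\in I(a)}\ell_i) \mod \Z$. The main obstacle is the explicit identification of the scalar $e^{\pi A}|_{\tilde x}$ as holonomy times a phase: because the loop carries nontrivial holonomy, no single-valued parallel frame exists on its image, so one must parametrize a frame by $t\in[0,\pi]$ along the loop and carefully solve the resulting transport ODE, tracking both the sign convention in the Kostant-Souriau formula and the orientation convention of parallel transport along the closed orbit.
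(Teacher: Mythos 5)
Your proof is correct and follows essentially the same route as the paper: lift the Hamiltonian loop to the pre-quotient, compare parallel transport along it to the lifted $SU(2)$-action on $L'$ via the Kostant-Souriau formula (the paper packages this comparison as the lemma $e^{-it|\mu(x)|}\,\mathcal{T}_t.u = \exp\bigl(t\,\mu(x)/|\mu(x)|\bigr).u$ stated just before Proposition \ref{sec:action}), and identify the time-$\pi$ group element $\exp(\pi u)=-I$ as multiplication by $(-1)^{\sum_{i\in I(a)}\ell_i}$ on the fiber of $\Ol(\ell_1)\boxtimes\cdots\boxtimes\Ol(\ell_n)$. Your intermediate formula $e^{\pi A}|_{\tilde x}=e^{i\pi\la_a}\hol^{-1}$ looks different from the paper's $\mathcal{T}_\pi = e^{i\pi\la_a}\,\widetilde{\exp(\pi u)}$, but both are valid readings of the Kostant--Souriau flow (here $\hol^2=e^{2i\pi\la_a}$, so they coincide) and yield the same final identity.
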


\subsection{Bending flows and action-angle coordinates} \label{sec:bending-flows-action}

Consider first a polygon with $n$ sides numbered from 1
to $n$. Let us cut the polygon into
$n-2$ triangles by connecting the vertices with $n-3$ straight
lines. To this triangulation is associated a admissible graph $\Ga$
defined as follows. The half-edges of $\Ga$ are the sides of the
polygon. The internal edges are the straight lines connecting the
vertices of the polygon. The vertices of $\Ga$ are the triangles. The
edges incident to a vertex are the sides of the triangle, cf. figure
\ref{fig:triangulation}.  It is easily seen
that this graph is admissible. Furthermore any admissible graph may be
obtained in this way.

In the following, we consider a triangulation of a $n$-sided polygon
$P$ with its associated
admissible graph $\Ga$. We assume that the sides of the polygon are
numbered in the apparent order. Then to each $n$-tuple $x= (x_1, \ldots,
x_n)$ of $\mathfrak{su}^*(2)$ we associate the polygon $P(x)$ with vertices $0$,
$x_1 +x_2$, \ldots , $x_1+ \ldots +x_{n-1}$.  So each class $[x_1,
\ldots, x_n]$ of $\module_\ell$ represents a polygons of $\mathfrak{su}
^*(2)$ up to isometry. The triangulation of $P$ induces a
triangulation of $P(x_1, \ldots, x_n)$. Observe that for each internal
edge $a$ of $\Ga$, $\la_{a} (x)$ is the length of a internal edge of
the triangulation. This explains that the image of $\la$ is the
polyhedron $\De ( \ell, \Ga)$ as asserted in theorem
\ref{theo:int_system}. Indeed to each vertex of $\Ga$ 
corresponds a face of the triangulation and related triangle
inequalities.

\begin{figure}
\begin{center}
\begin{picture}(0,0)%
\includegraphics{figure6.pstex}%
\end{picture}%
\setlength{\unitlength}{2960sp}%
\begingroup\makeatletter\ifx\SetFigFont\undefined%
\gdef\SetFigFont#1#2#3#4#5{%
  \reset@font\fontsize{#1}{#2pt}%
  \fontfamily{#3}\fontseries{#4}\fontshape{#5}%
  \selectfont}%
\fi\endgroup%
\begin{picture}(3438,3661)(1789,-3110)
\put(2701,-2461){\makebox(0,0)[lb]{\smash{{\SetFigFont{9}{10.8}{\rmdefault}{\mddefault}{\updefault}{\color[rgb]{0,0,0}$1$}%
}}}}
\put(3751,-3061){\makebox(0,0)[lb]{\smash{{\SetFigFont{9}{10.8}{\rmdefault}{\mddefault}{\updefault}{\color[rgb]{0,0,0}$2$}%
}}}}
\put(4951,-1861){\makebox(0,0)[lb]{\smash{{\SetFigFont{9}{10.8}{\rmdefault}{\mddefault}{\updefault}{\color[rgb]{0,0,0}$3$}%
}}}}
\put(4951,-661){\makebox(0,0)[lb]{\smash{{\SetFigFont{9}{10.8}{\rmdefault}{\mddefault}{\updefault}{\color[rgb]{0,0,0}$4$}%
}}}}
\put(4351,389){\makebox(0,0)[lb]{\smash{{\SetFigFont{9}{10.8}{\rmdefault}{\mddefault}{\updefault}{\color[rgb]{0,0,0}$5$}%
}}}}
\put(1951,-361){\makebox(0,0)[lb]{\smash{{\SetFigFont{9}{10.8}{\rmdefault}{\mddefault}{\updefault}{\color[rgb]{0,0,0}$6$}%
}}}}
\put(1951,-1711){\makebox(0,0)[lb]{\smash{{\SetFigFont{9}{10.8}{\rmdefault}{\mddefault}{\updefault}{\color[rgb]{0,0,0}$7$}%
}}}}
\end{picture}%
\caption{Triangulation} \label{fig:triangulation}
\end{center}
\end{figure}

In the proof of theorem \ref{sec:tore_action}, we shall show that the
Hamiltonian flow of $\la_a$ at time $t$ maps $x = [x_1, \ldots , x_n] \in
\module^*_\ell$ into  $\Phi_{a} (x,t) = [z_1 , \ldots, z_n]$ where 
$$  z
_i = \begin{cases} \Ad_g x_i \text{ if $i \in I(a)$} \\ x_i \text{
    otherwise}
\end{cases}, \qquad  \text{with } g = \exp \Bigl( \frac{t}{\la_a (x)}  \sum_{i \in
  I(a)} x_i \Bigr). $$    
Here we identified $\mathfrak{su} (2)$ and $\mathfrak{su} (2) ^* $ by using the
invariant scalar product introduced above. 
Considering the previous
interpretation of $\module_\ell $, these flows are
bendings along the internal edges of the triangulation. 

Now for any internal edge $a$, introduce a coordinate 
$$\te_a : \mreg_{\ell, \Ga} \rightarrow \R/ 2 \pi \Z$$ which measures
the dihedral angle between the two faces adjacent to the internal edge
of the triangulation associated to $a$. More precisely, we require
that $\te_a = 0$ or $\pi$ when these faces are coplanar and that
$\te_a(\Phi_{a} (x,t)) = \te_a + t$. 

\begin{theo} \label{sec:lagrangian} 
The subspace $\planar_\ell$ of $\module _\ell \setminus S_{\ell}$ which consists of
classes of $n$-tuples of coplanar vectors, is a closed Lagrangian
submanifold.  Consequently, the family $\frac{1}{2} \la_a, \te_a$, $a \in \Ei (\Ga)$, is an
action-angle coordinate system, that is the symplectic form is given
by $$\om = \sum_{ a \in \Ei ( \Ga)} d (\tfrac{1}{2} \la_a)
\wedge d \te_a$$ 
on $\mreg_{\ell, \Ga}$.
\end{theo}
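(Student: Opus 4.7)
The plan is to exhibit $\planar_\ell$ as the fixed point set of an anti-symplectic involution on $\module_\ell \setminus S_\ell$, and then to derive the action-angle formula from this together with the explicit bending flows and the Lagrangian nature of the torus fibers.

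For the Lagrangian statement, complex conjugation $c$ on $\Proj^1(\C)$ is anti-holomorphic and anti-symplectic for $\om_{FS}$, with fixed point set a great circle; its $n$-fold diagonal $c_n$ is therefore anti-symplectic on $S^2_{\ell_1} \times \cdots \times S^2_{\ell_n}$. Although $c_n$ only conjugates the $SU(2)$-action rather than commuting with it, this suffices to preserve $\mu^{-1}(0)$ and permute its $SU(2)$-orbits, so $c_n$ descends to an anti-symplectic involution $\si$ of $\module_\ell$. A class $[x_1, \ldots, x_n]$ is fixed by $\si$ iff there is $g \in SU(2)$ with $\overline{x_i} = g \cdot x_i$ for each $i$; the composition $\Ad_g \circ c$ is then an orientation-reversing isometry of $\mathfrak{su}(2)^*$, necessarily a reflection through some plane, so the fixed set coincides with $\planar_\ell$. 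As the fixed point set of a smooth involution $\planar_\ell$ is automatically closed; being the fixed set of an anti-symplectic involution it is isotropic; and since a coplanar polygon is determined up to reflection by the $n-3$ diagonal lengths $\la_a$, we have $\dim \planar_\ell = n-3 = \tfrac{1}{2}\dim \module_\ell$, so it is Lagrangian.

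For the action-angle formula, I first observe that because $\Ga$ is acyclic, removing an internal edge $a$ disconnects $\Ga$, and the two faces of the triangulation adjacent to the diagonal associated with any other internal edge $b$ both lie in a single component; consequently $\Phi_a$ transports them rigidly, so $\te_b$ is $\Phi_a$-invariant for $b \neq a$. Combined with the Poisson commutativity of the $\la_a$ in theorem \ref{theo:int_system} and the defining relation $\te_a(\Phi_a(x,t)) = \te_a(x) + t$, this identifies $X_{\la_a/2}$ with $\partial/\partial\te_a$ in the $(\la, \te)$ coordinate frame on $\mreg_{\ell, \Ga}$, the factor $\tfrac{1}{2}$ matching the $2\pi$-periodicity of the flow of $\tfrac{1}{2}\la_a$ from theorem \ref{sec:tore_action}. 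Then $\iota_{X_{\la_a/2}}\om = -d(\tfrac{1}{2}\la_a)$ gives
\[
\om = \sum_{a \in \Ei(\Ga)} d\bigl(\tfrac{1}{2}\la_a\bigr) \wedge d\te_a + B + C,
\]
where $B$ involves only $d\la \wedge d\la$ and $C$ only $d\te \wedge d\te$ terms. The torus fibers are Lagrangian (level sets of $n-3$ Poisson-commuting functions on a $2(n-3)$-dimensional symplectic manifold), forcing $C=0$; restricting $\om$ to $\planar_\ell$, on which each $\te_a$ is locally constant, forces $B=0$.

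The main obstacle is controlling the proportionality constant between $X_{\la_a}$ and $\partial/\partial\te_a$, which is what pins down the factor $\tfrac{1}{2}$ in the canonical form. With the normalization $|\xi|^2 = -\tfrac{1}{2}\operatorname{tr}(\xi^2)$, the generator $\frac{1}{\la_a(x)}\sum_{i \in I(a)} x_i$ of $\Phi_a$ is unit norm in $\mathfrak{su}(2)$, so under the $SU(2) \to SO(3)$ double cover it rotates the downstream vertices in $SO(3)$ at double speed, which is exactly what both explains the $2\pi$-periodicity of the $\tfrac{1}{2}\la_a$-flow and produces the factor $\tfrac{1}{2}$ in front of $\la_a$ in the action-angle formula.
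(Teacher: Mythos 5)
Your proof of the Lagrangian claim uses the same essential idea as the paper: exhibit $\planar_\ell$ as the fixed-point set of an anti-symplectic involution of $\module_\ell\setminus S_\ell$. The paper takes an arbitrary reflection $R$ of $\mathfrak{su}(2)^*$ and sets $\Psi[x_1,\ldots,x_n]=[R x_1,\ldots,R x_n]$; you descend complex conjugation from the product of $\Proj^1(\C)$'s, which in the coadjoint-orbit picture is a particular choice of such an $R$, so the two routes coincide. One step you assert without support is that the orientation-reversing isometry of $\mathfrak{su}(2)^*$ arising at a fixed point is "necessarily a reflection through some plane." By itself that is false: it could be $-\id$ or a rotoreflection fixing only the origin. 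What rescues you is that this isometry fixes every $x_i$, and away from $S_\ell$ the $x_i$ span at least a $2$-plane, forcing it to be the reflection in that plane. There is also a minor bookkeeping slip (the map fixing the $x_i$ is $\Ad_g^{-1}\circ R$, not $\Ad_g\circ c$), but that is immaterial. The paper's one-line proof passes over the same point.

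Your derivation of the canonical form of $\om$ is a correct fleshing-out of the paper's bare "Consequently." The ingredients are the right ones: acyclicity of $\Ga$ makes each $\Phi_a$ preserve $\te_b$ for $b\neq a$, Poisson commutativity makes it preserve each $\la_b$, so in the $(\la,\te)$-chart $X_{\la_a/2}=\partial_{\te_a}$; contracting with $\om$ then fixes the $d\la\wedge d\te$ block; torus-invariance of $\om$ (the bending flows are symplectomorphisms) shows the coefficients of the residual $d\la\wedge d\la$ and $d\te\wedge d\te$ blocks depend only on $\la$; and the Lagrangian conditions force both to vanish. One small redundancy: the relation $\iota_{\partial_{\te_a}}\om=-\tfrac{1}{2}\,d\la_a$ already kills the $d\te\wedge d\te$ block, so the appeal to Lagrangian torus fibers is not needed there. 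The genuinely new input — and the reason the paper presents the formula as a consequence of the first assertion — is that $\planar_\ell$ being Lagrangian kills the $d\la\wedge d\la$ block.
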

This theorem was proved in \cite{KaMi} for the graph $\Ga_n$ and its
associated triangulation given in figures \ref{fig:gamma_n} and \ref{fig:pol_gamma_n}.
\begin{proof} 
It is easily seen that $\planar_\ell$ is a $(n-3)$-dimensional
submanifold of $\module _\ell \setminus S_{\ell}$. Let $R$ be a
reflexion of $\mathfrak{su}^* (2)$. 
Consider the involution $\Psi$ of $\module _\ell \setminus S_{\ell}$
which sends $[x_1, \ldots, x_n]$ into $[R(x_1), \ldots , R(x_n)]$. It
transforms the symplectic form into its opposite. $\planar_{\ell}$
being the fixed point set of $\Psi$, it is a Lagrangian submanifold. 
  \end{proof}

\subsection{Proof of theorem \ref{theo:int_system}} \label{sec:proof-int-syst}

Let $H $ be the subspace of $(\mathfrak{su}(2))^n$
\begin{gather} \label{eq:defH}
 H := \{ (x_i ) \in (\mathfrak{su}(2))^n /  \; x_1 + \ldots + x_n = 0
\}
\end{gather}
Denote by $E(\Ga)$ the set of edges of $\Ga$ and introduce the map $\underline{\la} : H \rightarrow \R^{E(\Ga)}$ whose components are  
\begin{gather} \label{eq:defla}
 \underline{\la} _a(x) = \begin{cases} 
\bigl| \textstyle{\sum}
_{i \in I(a)} x_i \bigr|  \qquad \text{if $ a$ is an internal edge,} \\
|x_a| \qquad \text{ if $a$
is an half-edge.}
\end{cases}
\end{gather}
Recall that $\De (\Ga)$ is the convex polyhedron of $\R^{ E(\Ga)}$
consisting of the $(d_a)$ such that for any three edges $a,b$, and $c$
mutually distinct and incident to the same vertex, $d_a$, $d_b$ and $d_c$ satisfy the
triangle inequalities.

\begin{theo} \label{theo:image}
The image of $\underline{\la}$ is the set $\De(\Ga)$. The fibres
  of $\underline {\la}$ are connected.  Furthermore the interior points of
  $\De (\Ga)$ are regular values of $\underline{\la}$. 
\end{theo}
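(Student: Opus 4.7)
The plan is to prove the theorem by induction on $n$, the number of half-edges of $\Ga$. The base case is $n=3$, where $\Ga$ consists of a single trivalent vertex with three half-edges and no internal edges. Then $H$ identifies with the set of triples $(x_1,x_2,x_3) \in \mathfrak{su}(2)^3$ summing to zero, and $\underline{\la}$ is the length map $(|x_1|,|x_2|,|x_3|)$. The image is exactly the set of triples satisfying the triangle inequalities, which is $\De(\Ga)$. Two triples with the same triple of lengths differ by a diagonal $SO(3)$-action, so fibres are $SO(3)$-orbits and hence connected. On the interior the fibre is three-dimensional, so the $6$-dimensional space $H$ submerses onto the open stratum.

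For the inductive step, with $n\geqslant 4$, I exploit the fact that any trivalent tree with at least two vertices admits a vertex $v$ incident to exactly two half-edges (the ``internal'' subtree has at least two leaves). Call these half-edges $i, j$ and denote by $a$ the unique internal edge at $v$. Let $\Ga'$ be the admissible graph obtained from $\Ga$ by removing $v, i, j$ and converting $a$ into a half-edge; it has $n-1$ half-edges. The key ingredient is the contraction map
$$ \pi : H \longrightarrow H', \qquad (x_k)_{k=1,\ldots,n} \longmapsto (x_k)_{k \neq i,j} \text{ with entry } x_i+x_j \text{ at position } a.$$
A direct check using the definition of $I(b)$ shows that $\underline{\la}'_b \circ \pi = \underline{\la}_b$ for every edge $b$ of $\Ga'$. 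The fibre of $\pi$ over $x'$ is a three-dimensional affine space parametrised by $x_i$, with $x_j = x'_a - x_i$.

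Granting the three conclusions for $\Ga'$, I derive them for $\Ga$ as follows. A pair $(x_i,x_j)$ with prescribed lengths $d_i,d_j$ summing to a fixed vector of length $d_a$ exists if and only if $(d_a,d_i,d_j)$ satisfies the triangle inequalities at $v$, so the image of $\underline{\la}$ is cut out of the cylinder over the image of $\underline{\la}'$ by those inequalities; combined with the inductive identification of the image of $\underline{\la}'$ with $\De(\Ga')$ this yields $\De(\Ga)$. For connectedness of the fibre of $\underline{\la}$ above $d \in \De(\Ga)$, observe that $\pi$ restricts to a map onto the fibre of $\underline{\la}'$ above $d' := d|_{E(\Ga')}$ whose individual fibres are the sets $\{x_i \in \mathfrak{su}(2) : |x_i|=d_i,\ |x'_a-x_i|=d_j\}$, which are circles (or a single point at the boundary). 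By induction the base is connected, and circles are connected, so the total space is connected. For regularity at an interior point of $\De(\Ga)$, the triangle inequality at $v$ is strict, so $x_i$ and $x_j$ are linearly independent; restricted to the fibre of $\pi$, the differentials $d|x_i|$ and $d|x_j|$ are proportional to $x_i \cdot dx_i$ and $-x_j \cdot dx_i$ respectively, hence linearly independent. Since the pullbacks $\pi^* d\underline{\la}'_b$ are constant along the fibres of $\pi$ and span, by induction, a space of dimension $|E(\Ga')|$, together with $d|x_i|, d|x_j|$ they provide $|E(\Ga)|$ linearly independent differentials.

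The main obstacle will be the careful bookkeeping in the inductive step: one has to verify that the definition of $I(b)$ (which depends on a choice of orientation and on a path in $\Ga$) transfers correctly from $\Ga$ to $\Ga'$, so that the identity $\underline{\la}'_b \circ \pi = \underline{\la}_b$ holds edge by edge. Once this is settled, the fibre-by-fibre analysis above yields the three conclusions simultaneously.
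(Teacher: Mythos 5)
Your proof follows the same inductive strategy as the paper: choose a trivalent vertex incident to two half-edges, contract it to obtain the smaller graph $\Ga'$, and analyze $\underline{\la}$ through the resulting projection. Your contraction map $\pi$ is exactly the first component of the isomorphism $\Psi$ used in the paper's argument, and the edge-by-edge verification $\underline{\la}'_b\circ\pi=\underline{\la}_b$ that you flag as the ``main obstacle'' is precisely the bookkeeping the paper relies on.

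Two steps need repair. In the base case $n=3$, the inference ``on the interior the fibre is three-dimensional, so the $6$-dimensional space $H$ submerses'' is a non-sequitur: constant-dimensional fibres of the expected codimension do not by themselves imply submersivity (the map $(x,y,z,w)\mapsto(x^3,y,z)$ has one-dimensional embedded fibres everywhere but is not a submersion along $x=0$), and even passing to the free $SO(3)$-quotient only produces a smooth \emph{bijection} onto the open stratum, which need not be an immersion. One has to compute the differentials $d|x_1|,d|x_2|,d|x_3|$ directly, which is what the paper does via its extended map $\underline{h}:(\mathfrak{su}(2))^3\to\R^3\times\mathfrak{su}(2)$. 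Secondly, for connectedness you appeal to ``connected base with connected fibres gives connected total space,'' which is false for an arbitrary continuous surjection (a disconnected double cover of the circle is the standard counterexample); it does hold here because $\pi$ restricted to the compact fibre $\underline{\la}^{-1}(d)$ is a \emph{closed} surjection onto $\underline{\la}'^{-1}(d')$, a hypothesis you should record. The paper avoids this lemma by producing an explicit two-stage connecting path, first moving the $\Ga'$-coordinates along a path in a fibre of $\underline{\la}'$ and then sliding along the circle fibre with the endpoint vector $x_n+x_{n+1}$ held fixed. These are local repairs and the global architecture of your argument matches the paper's; the inductive step for regularity, comparing $d|x_i|,d|x_j|$ on $\ker d\pi$ with the pullbacks $\pi^*d\underline{\la}'_b$, is correct.
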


The interior of $\De ( \Ga)$ consists of the $(d_a)_{a
  \in E( \Ga)} $ such
that the strict triangle inequalities 
$$| d_a - d _b | < d_c < d_a + d _b$$ hold at each
vertex. So the functions $\underline{\la}_a$ do not vanish and are
smooth on
$\underline{\la}^{-1}( \operatorname{Int} (\De (\Ga)))$.

Before the proof, let us deduce theorem \ref{theo:int_system} from \ref{theo:image}. We
immediately have that the image of $\la$ is the polyhedron $\De
( \ell, \Ga)$ and that its fibre are connected. Next it is easily
checked  that
the interior of $\De ( \ell, \Ga)$ is the set of $(d_a )_{ a \in \Ei
  (\Ga)}$ such that $(d_a, \ell_i)$ belongs to $\Int ( \De)$. So
theorem \ref{theo:image} implies that the differentials of the $\la_a$
are linearly independent over $ \la ^{-1} (\Int \De ( \ell, \Ga))$. 
% Finally,
% since $\la$ is a proper map,  to prove that $ \la ^{-1} (\Int \De ( \ell, \Ga))$ is dense in
% $\module$, it suffices to show that the interior of $\De ( \ell,
% \Ga)$ is dense in $\De ( \ell,
% \Ga)$, which is easily checked. 
It remains to show the density of $\la ^{-1} (\Int \De ( \ell, \Ga))$,
which follows from proposition \ref{sec:densite}.

\begin{proof}[Proof of theorem \ref{theo:image}] The proof is by
  induction on the number of half-edges. The result is easily proved for the graph 
  with three monovalent vertices. Observe that in this
 case the fibers of $\underline{\la}$  are the orbits of the
 diagonal $SU(2)$-action.  Furthermore $(x_1, x_2, x_3)$ is a regular
 point of $\underline{h}$ if the triangle with vertices $0, x_1, x_1 +
 x_2$ is non-degenerate. 

Let $n \geqslant
  3$. Consider an admissible graph $\Ga$ with
  $(n+1)$ half-edges. 
Then $\Ga$
admits a vertex $\underline{v}$ incident to two half-edges and an internal edge. To see this,
consider the graph obtained from $\Ga$ by deleting all the
half-edges. This new graph being a tree, it has at least two
monovalent vertices, these vertices satisfy the property. Next
we order the half-edges of $\Ga$ in
such a way that the two ones incident to $\underline{v}$ are the $n$-th and
$(n+1)$-th.  Denote by $\underline{a}$ the internal edge incident to $\underline{v}$.
Changing the directions of the internal edges if necessary, we may assume that 
$$I(\underline{a} ) = \{ n, n+1 \} , \qquad I (a) \subset \{ 1, \ldots
, n-1 \} .$$ 
Let $\Ga'$ be the graph obtained from $\Ga$ by removing the vertex
$\underline{v}$ and 
  the $n$-th and $(n+1)$-th half-edges. Denote by $\underline{\la}'$ and $\underline{\la}''$
  the maps associated respectively to $\Ga'$
  and the graph $\Ga''$   
  with three monovalent vertices.
Then the induction is based on the simple observation that
$\underline{\la}
 (x ) = d$ if and only if 
$$ \underline \la ' (x_1, \ldots, x_{n-1}, -y ) = d |_{E(\Ga')}  \quad \text{
  and } \quad \underline \la '' (y, x_n , x_{n+1} ) = ( |y|, d_n, d_{n+1}) $$
where $y =  x_1 + \ldots + x_{n-1}$. As a first consequence, we immediately deduce
that the image of $\la$ is $\De(\Ga)$ if we already know that the images of $\la'$ and
$\la''$ are the polyhedra $\De(\Ga')$ and $\De ( \Ga'')$. 

Let us prove that the fibers of
$\underline \la$ are connected. Let $x^0$ and $x^1$ be such that $\underline {\la}
(x^0) = \underline {\la}
(x^1)$. Denote by $x'$ the $(n-1)$-tuple $(x_1, \ldots x_{n-1})$ and by
$y$ the sum $ x_1 + \ldots + x_{n-1}$.  Assuming the fibers of $\underline{\la}'$ are connected, there
exists a path $$t \rightarrow ( x'^t,- y^t)$$ from $(x'^0, -y^0)$ to $(x'^1, -y^1)$ remaining in
the same fiber of $\underline \la'$. Since $|y^t|$ is constant, there
exists a path $ t \rightarrow g^t$ in $SU (2)$ such that $g^t y^0 = y^t$. Then the
path $$t \rightarrow ( x'^t, g^t x^0_n, g^t x^0_{n+1})  $$
connects $ x^0$ with $(x'^1, g^1 x^0_n, g^1 x^0_{n+1})$ and takes its
value in a single fiber of $\underline{\la}$. Next since 
$$\underline \la'' (-g^1(
x^0_n +  x^0_{n+1}),g^1
x^0_n, g^1 x^0_{n+1} ) = \underline \la '' (-(x^1_n + x^1_{n+1}), x^1_n, x^1_{n+1}),$$
 there exists a path $$ t \rightarrow (-(z_n^t +
z_{n+1}^t), z_n ^t, z_{n+1}^t)$$ which connects these two points by
remaining in the same fiber of $\underline \la ''$. Furthermore since $ g^{1}
(x^0_n + x^0 _{n+1}) = y^1 =x^1_n + x^1_{n+1}$, one may choose this path in such a way that $(z_n^t +
z_{n+1}^t)$ is constant. Finally the path 
$$ t \rightarrow ( x'^1, z^t_n, z^t_{n+1})  $$
connects $ (x'^1, g^1 x^0_n, g^1 x^0_{n+1})$ with $x^1$ and this ends
the proof of the connectedness.  

Let us prove that the interior points of $\De( \Ga) $ are regular values of
$\underline \la$. Assume that the result is satisfied for the function
$\underline{\la}'$ associated to $\Ga'$. Let $x^0$ be such that its
image belongs to the interior of $\De (\Ga)$. Then one has to prove
that the map 
$$  (\mathfrak{su}(2))^{n+1} \rightarrow \R^{E(\Ga) } \times \mathfrak{su}(2)
, \qquad x \rightarrow ( \underline{\la} (x), x_1 + \ldots + x_{n+1}) $$
is submersive at $x^0$, 
where $\underline \la $ is the obvious extension of $\underline \la $ from $H$ to
$(\mathfrak{su}(2))^{n+1}$. 
Consider the isomorphism
$$ \Psi: (\mathfrak{su}(2))^{n+1} \rightarrow H' \times H'' , \qquad x
\rightarrow (x' , -y) , ( - (x_n + x_{n+1} ) , x_n, x_{n+1})$$
Here we denote as previously  by $x'$ the $(n-1)$-tuple $(x_1, \ldots x_{n-1})$ and by
$y$ the sum $ x_1 + \ldots + x_{n-1}$. 
Then one has 
$$\underline \la'_a \circ \Psi  =
\underline \la _a, \quad \forall a \in E(\Ga') \quad \text{ and } \quad
\underline \la''
\circ \Psi  = ( \underline \la _{\underline{a}} , \underline \la_n,
\underline \la_{n+1})  $$
By induction assumption, $\underline \la '$ is submersive at $(x'^0,
-y^0)$. Hence it suffices to prove that the map  
$$   H ''
\rightarrow \R^2 \times \mathfrak{su}(2), \qquad  (-(x_n + x_{n+1}),
x_n, x_{n+1} ) \rightarrow  (| x_n|, | x_{n+1}| , x_n + x_{n+1}) $$ 
is submersive at $ (-(x_n^0 + x_{n+1}^0),
x_n^0, x_{n+1}^0 )$. This is true because $ \underline \la (x^0)$ being an interior
point of $\De (\Ga)$, the triangle with vertices $0$, $x_n^0$ and
$x_{n}^0 + x_{n+1}^0$ is non-degenerate. This ends the proof of the theorem.
\end{proof}

For any $n$-tuple $\ell$ of positive numbers, denote by $P_{\ell}$ the
subset of $H$,
$$ P_\ell: =   \bigl(S_{\ell_1}^2
\times \ldots \times S_{\ell_{n}}^2 \bigr) \cap H, $$ 
so that $\module_{\ell}$ is the quotient of $P_{\ell}$ by $SU(2)$. 
\begin{prop} \label{sec:densite}
If $\ell$ satisfies 
  (\ref{eq:ineq_stric}), then $\underline{\la} ^{-1} ( \De ( \Ga )) \cap
  P_\ell$ is dense in $P_{\ell}$. 
\end{prop}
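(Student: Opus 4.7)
As stated, the claim is a tautology: by Theorem \ref{theo:image} one has $\underline \la(H)\subset\De(\Ga)$, hence $\underline \la^{-1}(\De(\Ga))\cap P_\ell=P_\ell$. I read the statement as the density of $\underline \la^{-1}(\Int \De(\Ga))\cap P_\ell$ in $P_\ell$, which is what the paragraph just above the proposition requires in order to finish the proof of Theorem \ref{theo:int_system}. Equivalently, the configurations $x\in P_\ell$ whose triangle inequalities are strict at every vertex of $\Ga$ form a dense subset of $P_\ell$.

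I would proceed by induction on the number $n$ of half-edges of $\Ga$, following the reduction used in the proof of Theorem \ref{theo:image}. The base case $n=3$ is immediate: condition (\ref{eq:ineq_stric}) on $(\ell_1,\ell_2,\ell_3)$ is precisely the strict triangle inequality at the unique vertex of $\Ga$, so $\underline \la(P_\ell)=\{(\ell_1,\ell_2,\ell_3)\}\subset\Int\De(\Ga)$ and nothing has to be perturbed.

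For the inductive step, pick a vertex $\underline v$ of $\Ga$ incident to two half-edges (reordered to be the $(n-1)$-th and $n$-th) and to a single internal edge $\underline a$, and let $\Ga'$ be the admissible graph with $n-1$ half-edges obtained by deleting $\underline v$ together with its two incident half-edges. For $x=(x_1,\dots,x_n)\in P_\ell$ set $y(x)=-(x_{n-1}+x_n)$ and $s(x)=|y(x)|$; then $(x_1,\dots,x_{n-2},-y(x))$ lies in $P_{\ell'(s(x))}$ with $\ell'(s):=(\ell_1,\dots,\ell_{n-2},s)$. The condition $\underline \la(x)\in\Int\De(\Ga)$ decouples into the non-collinearity requirement (i) $s(x)\in I:=(|\ell_{n-1}-\ell_n|,\ell_{n-1}+\ell_n)$ at $\underline v$, together with (ii) $(x_1,\dots,x_{n-2},-y(x))\in(\underline \la')^{-1}(\Int\De(\Ga'))$, where $\underline \la'$ is the analogous map for $\Ga'$. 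A direct computation using (\ref{eq:ineq_stric}) for $\ell$ shows that the subset $S\subset I$ consisting of those $s$ for which $\ell'(s)$ satisfies (\ref{eq:ineq_stric}) with respect to $\Ga'$ is a non-empty open subinterval.

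Given $x^0\in P_\ell$ and $\varepsilon>0$, I would first produce $\tilde x\in P_\ell$ with $\|\tilde x-x^0\|<\varepsilon/2$ and $s(\tilde x)\in S$: the map $s:P_\ell\to\R$ is continuous and, by (\ref{eq:ineq_stric}) applied to $\ell$, attains every value of $I$ on $P_\ell$; using the infinitesimal freedom in $(x_1,\ldots,x_n)$ subject to $\sum_i x_i=0$ and $|x_i|=\ell_i$, one can perturb $x^0$ slightly along $P_\ell$ to bring $s$ into the open set $S$. Then the inductive hypothesis applied to $(\Ga',\ell'(s(\tilde x)))$ provides a further $\varepsilon/2$-perturbation of $(\tilde x_1,\dots,\tilde x_{n-2})$ inside $P_{\ell'(s(\tilde x))}$ that preserves $\tilde y$ and secures (ii); together with the unchanged $(\tilde x_{n-1},\tilde x_n)$ one obtains $x^1\in P_\ell$ with $\|x^1-x^0\|<\varepsilon$ satisfying (i) and (ii). The main obstacle is the first perturbation when $x^0$ is itself degenerate, e.g.\ when $x^0_{n-1},x^0_n$ are collinear (so $s(x^0)\in\partial I$), or when $(x^0_1,\ldots,x^0_{n-2},-y^0)$ is a collinear configuration forcing $s(x^0)\in\partial S$; in each such case one must exhibit an explicit tangent direction in $P_\ell$ moving $s$ into $S$. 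This is where the strict polygon inequalities (\ref{eq:ineq_stric}) enter essentially: they ensure that $S$ is non-empty and that the corresponding degenerate loci are real-algebraic subsets of positive codimension in $P_\ell$.
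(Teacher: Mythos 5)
Your reading of the statement (with $\Int\De(\Ga)$ in place of $\De(\Ga)$) and your plan are essentially the paper's: the same induction on the number of half-edges, the same reduction via a vertex incident to two half-edges, and your open interval $S$ is exactly the paper's set $Q$. At the step you flag as the main obstacle, the paper is only slightly more explicit—it first observes that the locus where all the $x_i$ are colinear is nowhere dense, and then, off that locus, invokes submersivity of the closing-up map $(x_1,\ldots,x_{n-1})\mapsto x_1+\cdots+x_{n-1}$ rather than a generic codimension count (which needs some care since $P_\ell$ is singular precisely at the all-colinear configurations)—but the overall strategy is the same.
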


\begin{proof} 
Again the proof is by induction on the number of half-edges. Introduce the
graphs $\Ga$, $\Ga'$ and $\Ga''$ as in the proof of theorem
\ref{theo:image} and assume the result is satisfied for $\Ga'$ and for
any $\ell$ satisfying the inequalities (\ref{eq:ineq_stric}). 

Let $\ell$ be a $(n+1)$-tuple satisfying (\ref{eq:ineq_stric}). Let $\tilde {S}_{\ell}$ be the set of $(x_1,
\ldots , x_{n+1}) \in P_{\ell}$ such that the $x_i$ are mutually
colinear. It is easily deduced from (\ref{eq:ineq_stric}) that $P_{\ell} \setminus
\tilde{S}_\ell$ is dense in $P_{\ell}$.  

Next we consider the set
$Q$ consisting of the $(x_1, \ldots , x_{n+1}) \in P_{\ell} \setminus
\tilde{S}_\ell$ such that the lengths $\ell'_n = | x_n + x_{n+1}|$
satisfy the two following conditions. First $(\ell_1, \ldots , \ell_{n-1},
\ell'_n)$ satisfies the inequalities (\ref{eq:ineq_stric}) and
second $\ell'_n, \ell_n, \ell_{n+1}$ satisfy the strict triangle
inequalities. One proves that $Q$ is dense in $P_\ell \setminus
\tilde{S}_\ell$. To do this observe that the inequalities
(\ref{eq:ineq_stric}) are satisfied as soon as there is no equality
and this can happen only for a finite number of $\ell'_n$. For any $x^0 \in
P_{\ell} \setminus \tilde{S}_\ell$, one has to find points in $Q$
arbitrarily close to $x^0$. This can be proved by considering
separetly the case where $x^0_1, \ldots, x^0_{n-1}$
are mutually colinear. If there are not, one concludes by using
that the map 
$$  S_{\ell_1} \times \ldots \times S_{\ell_{n-1}} \rightarrow
\mathfrak{su}(2)^*$$ 
if submersive at $(x^0_1, \ldots , x^0_{n-1})$. 

The last step is to prove the density of $\underline{\la} ^{-1} ( \De ( \Ga )) \cap
  P_\ell$ in $Q$. This follows from the induction assumption. 
\end{proof}

\subsection{Proof of theorem \ref{sec:tore_action}} \label{sec:proof-tore-action}

 The Hamiltonian flow of the $\la_a$  is easily described
applying the following general result. 
Let $(M,\om)$ be a symplectic manifold with a
Hamiltonian action of a Lie group $G$.  Assume that the Lie algebra $\mathfrak{g}$ has an
invariant scalar product, which we use to identify $\mathfrak{g}$ and
$\mathfrak{g}^*$. Denote by $\mu$ the momentum of the action and 
let $M^*$ be the open set $\{ \mu \neq 0 \}$ of $M$. One easily checks
the following proposition. 

\begin{prop} \label{prop:flot_norm}
The Hamiltonian flow of $|\mu| \in \Ci ( M^*)$ at time $t$ is given by 
$$ \Phi_t (x) = \exp \Bigl( t \frac{\mu (x) }{|\mu (x) |} \Bigr).x, \qquad
\forall x \in M^* $$
\end{prop}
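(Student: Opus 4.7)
My plan is to check that the proposed curve is indeed the integral curve of the Hamiltonian vector field of $|\mu|$ by a two-step computation: first identify $X_{|\mu|}$ pointwise in terms of infinitesimal generators, then exploit equivariance of $\mu$ to see that along the candidate curve $\mu$ stays constant, so that the curve actually is an orbit of a single fundamental vector field.

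For the first step, I would fix $x \in M^*$ and differentiate $|\mu|^2 = \langle \mu, \mu\rangle$ to get
\[
d|\mu|_x = \frac{1}{|\mu(x)|}\, d(\mu^{\mu(x)})_x,
\]
where on the right, $\mu(x) \in \mathfrak{g}$ is viewed as a constant element (obtained from the identification $\mathfrak{g} \simeq \mathfrak{g}^*$). By the defining property of the momentum map, $d(\mu^{\xi})_x = -\omega_x(\xi^\#, \cdot)$ for every fixed $\xi \in \mathfrak{g}$. Applying this with $\xi = \mu(x)$ and using $\omega(X_{|\mu|}, \cdot) = -d|\mu|$, I get
\[
X_{|\mu|}(x) = \frac{1}{|\mu(x)|}\, \bigl(\mu(x)\bigr)^{\#}_{\, x}.
\]

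For the second step, the key observation is that $\mu$ is constant along the candidate curve. Equivariance of $\mu$ (combined with the identification via the invariant scalar product) gives $\mu(\exp(t\xi)\cdot x) = \Ad_{\exp(t\xi)}\mu(x)$ for all $\xi$. Choosing $\xi = \mu(x)/|\mu(x)|$, the element $\xi$ is proportional to $\mu(x)$ hence commutes with it, so $\Ad_{\exp(t\xi)}\mu(x) = \mu(x)$. Therefore, setting $\Phi_t(x) := \exp\bigl(t\,\mu(x)/|\mu(x)|\bigr)\cdot x$, one has $\mu(\Phi_t(x)) = \mu(x)$ and in particular $|\mu(\Phi_t(x))| = |\mu(x)|$ for all $t$.

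Combining the two steps, the time derivative of the candidate curve is the fundamental vector field of the \emph{fixed} element $\xi = \mu(x)/|\mu(x)|$ evaluated at $\Phi_t(x)$, while the Hamiltonian vector field at $\Phi_t(x)$ is, by step one and the constancy of $\mu$ along the curve, exactly $\frac{1}{|\mu(x)|}(\mu(x))^{\#}_{\Phi_t(x)} = \xi^{\#}_{\Phi_t(x)}$. So the two agree and $\Phi_t$ is the Hamiltonian flow of $|\mu|$ on $M^*$, with initial value $\Phi_0(x) = x$. The only conceptual point that needs care is the passage from ``$d\mu^\xi$ for fixed $\xi$'' to the differential of $|\mu|$, i.e.\ making sure that the variation of $\mu(x)$ with $x$ does not contribute an extra term; this is automatic because $d|\mu|^2_x = 2\langle \mu(x), d\mu_x(\cdot)\rangle$ pairs the varying differential against the value, not against its variation. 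Once this is in place, the proof reduces to the elementary observation about $[\mu(x), \mu(x)] = 0$.
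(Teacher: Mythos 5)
The paper offers no proof (``One easily checks the following proposition''), and your argument is exactly the direct verification one would expect: compute $X_{|\mu|}(x)=\frac{1}{|\mu(x)|}(\mu(x))^{\#}_x$ from $d|\mu|=\frac{1}{|\mu|}\,d\mu^{\mu(x)}$ and the moment map identity, observe via equivariance and $[\mu(x),\mu(x)]=0$ that $\mu$ (hence $|\mu|$) is constant along the candidate curve, and conclude by matching tangent vectors. The sign convention you adopt, $\omega(X_f,\cdot)=-df$, is the one forced by the paper's defining relation $d\mu^\xi+\omega(\xi^\#,\cdot)=0$ if one wants $\exp(t\xi)\cdot$ to be the Hamiltonian flow of $\mu^\xi$, so your conventions are consistent with the paper's; the proof is correct and complete.
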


Let $I$ be a subset of $\{ 1, \ldots, n \}$. Consider the Hamiltonian action of $SU(2)$ on $S^2_{\ell_1} \times
\ldots \times S^2_{\ell_n}$ with momentum 
\begin{gather} \label{eq:momentI} 
\mu_{I}( x_1, \ldots
, x_n ) = \sum _{i \in I} x_i
\end{gather}
By the previous proposition, its flows $\rho_{I,t}$ at time $t$ sends
$(x_1, \ldots , x_n)$ into the $n$-tuple $(y_1, \ldots , y_n)$ given by
$$y_i = \begin{cases}  \Ad_g x_{i} \text{ if } i \in I, \\ 
x_i \text{ otherwise}, 
\end{cases} 
\text{ with  }  g = \exp \Bigl( t \frac{ \mu_{I}(x) }{| \mu_{I}(x) |}
\Bigr). $$    
We apply this to the set $I(a)$ associated to an internal edge $a$. The function $|\mu_{I(a)}|$ is invariant by the diagonal action and
descends to the function $\la_a$. Hence the flows $\rho_{I(a),t}$
lifts the Hamiltonian flow of
$\la_a$. Furthermore, with our
normalization, 
 $$\exp( \xi) =1 \Leftrightarrow |\xi| \in 2 \pi \N, \qquad \forall
 \xi \in \mathfrak{su}(2). $$
Since the coadjoint action factorizes through a $SO(3)$-action, the flow of the $\la_a$ is $\pi$-periodical and this
proves the first part of theorem \ref{sec:tore_action}. 
Next by theorem \ref{theo:int_system}, the torus orbits in
$\mreg_{\ell, \Ga }$ and
the fibers of $\la$ have
the same dimension. Furthermore the fibers are
connected. Hence the fibers in $\mreg_{\ell, \Ga}$ are the torus orbits. It remains
to prove that the torus action is free in $\mreg_{\ell, \Ga}$. 

Let $H$ be the subspace defined in (\ref{eq:defH}). Let us extend
the action $\rho_{I,t}$ on $H \setminus \{ \sum_{i \in I } x_i \neq 0
\}$ in the obvious way. We will consider the actions $\rho_{I(a), t}$
  altogether. 
Here we can not choose the
orientations of the internal edges in such a way that the sets $I(a)$
are mutually disjoint. So the action of $ \rho_{I(a),t}$ and $ \rho_{I(b),t}$ do not
necessarily commute.  Let $a_1$, \ldots , $a_{n-3}$ be
the internal edges of $\Ga$. The following proposition completes the proof of
theorem \ref{sec:tore_action}.

\begin{prop} For any $x \in H$ such that $\underline \la (x)  \in \Int
  \De (\Ga)$ and for any $(t_i) \in
  \R^{ n-3}$, if there exists $g \in SU(2)$ such that
$$ \rho_{I(a_1),t_1} (\rho_{I(a_2),t_2}( \ldots (\rho_{I(a_{n-3}),t_{n-3}}(x)) \ldots) ) = g.x $$
then $t_1 \equiv \ldots \equiv t_n \equiv 0$ modulo $\pi$. 
\end{prop}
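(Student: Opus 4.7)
The plan is to proceed by induction on the number $n$ of half-edges of $\Ga$, exploiting the tree structure to peel off one vertex at a time. Since $\Ga$ is a tree, after deleting the half-edges one gets a tree whose monovalent vertices give at least two vertices $\underline{v}$ of $\Ga$ incident to two half-edges and one internal edge $\underline{a}$. Relabel so that the two half-edges at $\underline{v}$ are $n-1$ and $n$, set $a_1 = \underline{a}$ with orientation chosen so $I(a_1) = \{n-1,n\}$, and orient the remaining internal edges $a_2, \ldots, a_{n-3}$ so that $I(a_j) \subset \{1, \ldots, n-2\}$ for $j \geqslant 2$. This is possible because each $a_j$ lies on one side of $\underline{a}$ in the tree, and that side can be taken to be the one not containing $\underline{v}$.

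For the base case $n = 4$ (one internal edge), the hypothesis $\rho_{I(a_1),t_1}(x)=g.x$ forces $\Ad_g$ to fix $x_{n-1}$ and $x_n$, which are not colinear because $\underline{\la}(x)\in \Int\De(\Ga)$, so $g = \pm 1$ and $\Ad_{g_1}$ fixes $x_{n-1}$ and $x_n$ as well, forcing $g_1 = \pm 1$, i.e.\ $t_1 \in \pi\Z$. For the induction step, let $\Ga'$ be the graph obtained by removing $\underline{v}$, and set $\tilde x = (x_1, \ldots, x_{n-2}, x_{n-1}+x_n) \in H'$. The flows $\rho_{I(a_j),t_j}$ for $j \geqslant 2$ leave the components $x_{n-1}, x_n$ fixed, and act on the first $n-2$ components identically to the corresponding flows in $\Ga'$ applied to $\tilde x$, because the $I$-sets and the associated sums $\mu_{I(a_j)}$ coincide. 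One checks that the endpoint $\tilde y$ of the $\Ga'$ composition satisfies $\tilde y = g. \tilde x$ using that $g$ must preserve $x_{n-1}+x_n = -(x_1+\ldots+x_{n-2})$ (the latter being preserved by the flows). Furthermore $\underline{\la}'(\tilde x) \in \Int \De(\Ga')$ because the strict triangle inequalities at vertices of $\Ga' \subset \Ga$ are inherited, and the new half-edge has length $\la_{\underline{a}}(x) > 0$. The induction hypothesis then yields $t_j \in \pi\Z$ for all $j \geqslant 2$.

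With those $t_j$'s in $\pi\Z$, each corresponding $\exp(t_j \mu_{I(a_j)}/|\mu_{I(a_j)}|) \in \{\pm 1\}$, so the flows $\rho_{I(a_j),t_j}$ act trivially, and the hypothesis collapses to $\rho_{I(a_1),t_1}(x) = g.x$. This gives $\Ad_g x_i = x_i$ for $i \leqslant n-2$, together with $\Ad_{g_1} x_{n-1} = \Ad_g x_{n-1}$ and $\Ad_{g_1} x_n = \Ad_g x_n$. To conclude $g = \pm 1$, one shows $x_1, \ldots, x_{n-2}$ span a 2-dimensional subspace of $\mathfrak{su}(2)^*$: apply the strict triangle inequality at the vertex $w$ of $\Ga$ opposite $\underline{v}$ along $\underline{a}$, whose three incident edges partition $\{1,\ldots, n-2\}$ into $S, T$ and carry the subset $\{n-1,n\}$; if the $x_i$ with $i \leqslant n-2$ were colinear, then $\sum_S x_i$ and $\sum_T x_i$ would be colinear, forcing $x_{n-1}+x_n = -(\sum_S + \sum_T)$ into the same line and producing equality in a triangle inequality at $w$, a contradiction. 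Thus $\Ad_g = \id$ on a 2-plane hence on all of $\mathfrak{su}(2)^*$, so $g = \pm 1$. Then $\Ad_{g_1}$ fixes both $x_{n-1}$ and $x_n$, which are linearly independent by the strict triangle inequality at $\underline{v}$, so $g_1 = \pm 1$ and $t_1 \in \pi\Z$.

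The main technical obstacle I expect is the careful bookkeeping in step two, namely checking that (i) the non-commuting flows can be transported from $\Ga$ to $\Ga'$ in the correct order, (ii) the element $g$ transfers unchanged to an element of $SU(2)$ acting on $\tilde x$, and (iii) the interior condition $\underline{\la}(x)\in\Int\De(\Ga)$ passes cleanly to $\underline{\la}'(\tilde x)\in \Int\De(\Ga')$. The second subtle point is the non-colinearity argument of step three: it uses essentially only the strict triangle inequality at the one vertex $w$, but it is crucial to identify the right vertex and to convert colinearity of the $x_i$ into a forbidden equality case.
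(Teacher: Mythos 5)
Your proof is correct and follows essentially the same strategy as the paper: induction on the number of half-edges, peeling off a pendant trivalent vertex $\underline{v}$ incident to two half-edges and transferring the remaining flows to the reduced graph $\Ga'$. The one structural variation is that you place the special edge $\underline{a}$ \emph{outermost} in the composition ($a_1$) while the paper places it \emph{innermost} ($a_{n-2}$ with $\Ga$ having $n+1$ half-edges); these are symmetric choices and both work for the same reason, namely that the special flow and the remaining flows act on disjoint blocks of coordinates. Two small points. First, in your base case $n=4$ with $I(a_1)=\{n-1,n\}$, the flow $\rho_{I(a_1),t_1}$ leaves $x_1,x_2$ untouched, so the hypothesis forces $\Ad_g$ to fix $x_1,x_2$ (not $x_{n-1},x_n$ as you wrote); the argument still closes since $x_1,x_2$ are non-colinear by the strict triangle inequality at the vertex $w$, after which $g=\pm 1$ and then $\Ad_{g_1}$ fixes $x_{n-1},x_n$, which are non-colinear by the strict inequality at $\underline{v}$. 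Second, the relabeling ``$a_1=\underline{a}$'' and the choice of edge orientations implicitly use the facts, stated explicitly in the paper, that the result is independent of the order of the internal edges (because the flows $\rho_{I(a),t}$ commute modulo the diagonal $SU(2)$-action) and independent of the direction of an internal edge (because $\rho_{I,t}(x)=g(x,t)\cdot\rho_{I^c,t}(x)$); since the $a_i$ are given in the proposition's statement, these reductions should be justified. Your identification of the vertex $w$ opposite $\underline{v}$ along $\underline{a}$ for the non-colinearity argument is a useful added detail the paper leaves implicit.
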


\begin{proof}
Again the proof is by induction on the number of half-edges. Introduce the
graphs $\Ga$, $\Ga'$ and $\Ga''$ as in the proof of theorem
\ref{theo:image} and assume the result is satisfied for $\Ga'$. Since
the various circle actions mutually commute modulo the diagonal action
of $SU(2)$, the result is independent of the order of the internal
edges. Observe also that
$$ \rho_{I,t}(x) = g(x,t) . \rho_{I^c, t} (x)
$$
So the result does not depend on the direction of the internal edge. 
So we may assume that $a_{n-2}$  is the internal edge of $\Ga$ incident to the $n$-th and
$(n+1)$-th half-edges. We may also assume that $I(a_{n-2})= \{ n, n+1 \}$
and $I(a_k) \subset \{1 , \ldots, n-1 \}$ for $k=1, \ldots, n-3$.   
Let $x
\in  \underline \la ^{-1} (\Int
  \De (\Ga) )$ such that 
$$  \rho_{I(a_1),t_1} (\rho_{I(a_2),t_2}( \ldots
(\rho_{I(a_{n-2}),t_{n-2}}(x) ) \ldots) ) = g.x .$$
This implies that
\begin{gather} \label{eq:9}   
\rho_{I(a_1),t_1} (\rho_{I(a_2),t_2}( \ldots
(\rho_{I(a_{n-3}),t_{n-3}}(x',-y) ) \ldots) ) = g.(x', -y)
\end{gather} 
where $x' = (x_1,\ldots, x_{n-1})$ and $y = x_1 + \ldots +
x_{n-1}$. Furthermore
\begin{gather}  \label{eq:10}
 \exp \Bigr( t_{n-2} \frac{ x_{n} + x_{n+1} }{|x_n + x_{n+1}|} \Bigl)
.(x_n,x_{n+1}) = g.(x_n, x_{n+1})
\end{gather}
By the induction assumption, one has that $t_1 \equiv \ldots  \equiv t_{n-3}
\equiv 0$ modulo $\pi$. Consequently, (\ref{eq:9}) reads as
$$  (x_1,\ldots, x_{n-1}) = g. (x_1,\ldots, x_{n-1})$$
If $g \neq \pm \id$, this implies that the vectors $x_1, \ldots, x_{n-1}$ are
colinear and this contradicts the fact that $\underline{\la}(x) \in \Int ( \De (
\Ga))$. So $g = \pm \id$. Finally since $x_n$ and $x_{n+1}$ are not
colinear, equation (\ref{eq:10}) implies that $t_{n-2} \equiv 0$
modulo $\pi$. 
\end{proof}

\subsection{Computation of the holonomies} \label{sec:comp-holon}

Extending the proposition \ref{prop:flot_norm} to the prequantum case, we can also
compute the actions $\ga_a$. Assume that $(M, \om)$  admits a prequantization
bundle $L \rightarrow M$ with curvature $\frac{1}{i} \om$  and that the
action of $G$ lifts to $L$ satisfying the usual assumption
(\ref{eq:lift_action}). 
\begin{prop} 
 For any $x \in M^*$ and $u \in L_x$, one has
$$ e^{-it | \mu (x) | } \mathcal{T}_t.u = \exp \Bigl( t \frac{\mu (x)
}{|\mu (x) |} \Bigr).u $$
where $\mathcal{T}_t.u \in L_{\Phi_t (x)}$ is the parallel transport of
$u$ along the path $ s\in [0,t] \rightarrow \Phi_s (x) $. 
\end{prop}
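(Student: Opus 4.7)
The plan is to reduce the identity to an ODE for the ratio of the two sides. Let $\xi = \mu(x)/|\mu(x)| \in \Lie$, viewed as a fixed element. By the previous proposition the Hamiltonian flow at $x$ satisfies $\gamma(t) := \Phi_t(x) = \exp(t\xi)\cdot x$, so both $\mathcal{T}_t u$ and $v(t) := \exp(t\xi)\cdot u$ lie in the line $L_{\gamma(t)}$. Hence there exists a scalar function $g(t)$ with $g(0) = 1$ such that $\mathcal{T}_t u = g(t)\, v(t)$, and it suffices to prove that $g(t) = e^{it|\mu(x)|}$.

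A key preliminary step is to observe that $\mu^\xi$ is constant along $\gamma$, equal to $|\mu(x)|$. By equivariance of the momentum, $\mu(\gamma(t)) = \Ad^*_{\exp(t\xi)}\mu(x)$, and since $\xi$ and $\mu(x)$ are proportional under the invariant identification $\Lie \simeq \Lie^*$, one has $[\xi,\mu(x)] = 0$, so in fact $\mu(\gamma(t)) = \mu(x)$ and $\mu^\xi(\gamma(t)) = \langle \mu(x),\xi\rangle = |\mu(x)|$ for all $t$.

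Next I would compute $\dot v(t) \in T_{v(t)} L$. By definition this is the value at $v(t)$ of the infinitesimal generator $\tilde{\xi}^\#$ of the lifted $G$-action on $L$. Its projection to $M$ equals $\xi^\#(\gamma(t)) = \dot\gamma(t)$, and its vertical component can be read off from (\ref{eq:lift_action}): equating the two expressions for the infinitesimal action on sections, namely $ds(\xi^\#) - \tilde{\xi}^\#\circ s$ and $\nabla^{L}_{\xi^\#} s + i\mu^\xi s$, forces the vertical part of $\tilde{\xi}^\#$ at $v(t)$, under the canonical identification of the vertical space with the fibre, to be $-i\mu^\xi(\gamma(t))\, v(t) = -i|\mu(x)|\, v(t)$.

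Finally, differentiating the relation $\mathcal{T}_t u = g(t)\, v(t)$ and using that fibrewise scalar multiplication by $g(t)$ preserves the horizontal distribution of the Chern connection and acts linearly on vertical vectors, one finds that the left-hand side is purely horizontal (parallel transport has zero covariant derivative) while the right-hand side has vertical component $\bigl( \dot g(t) - i|\mu(x)| g(t) \bigr)\, v(t)$. Equating vertical parts yields the ODE $\dot g = i|\mu(x)|\, g$ with $g(0)=1$, hence $g(t) = e^{it|\mu(x)|}$, which is the claim. The only genuine obstacle is the translation of (\ref{eq:lift_action}), an identity between operators acting on sections, into the pointwise formula for the vertical component of $\tilde{\xi}^\#$ at a given point of $L$; the rest is elementary bookkeeping and a one-variable linear ODE.
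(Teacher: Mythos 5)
Your proof is correct, and since the paper states this proposition without proof (treating it as a routine extension of Proposition~\ref{prop:flot_norm} to the prequantum setting), your argument is exactly what is implicitly intended. The one delicate point — extracting from the operator identity~(\ref{eq:lift_action}) the pointwise formula $\tilde\xi^\#(v) = (\xi^\#)^{\operatorname{hor}}(v) - i\mu^\xi(x)\,v$ for the generator on $L$ — you handle with the right sign: note that this forces reading ``infinitesimal action on sections'' in~(\ref{eq:lift_action}) as $ds(\xi^\#) - \tilde\xi^\#\circ s$ (equivalently, the negative Lie derivative), and one can confirm this is the paper's convention by checking it against the formula $\tfrac{d}{dt}\big|_{t=0}\|\exp(it\xi).u\|^2/\|u\|^2 = 2\mu^\xi(x)$ used in the proof of Proposition~\ref{sec:estimation}: with your sign one gets the vertical part of $\widetilde{(i\xi)}^\#$ at $u\in L_x$ equal to $+\mu^\xi(x)u$, hence $\tfrac{d}{dt}\|u(t)\|^2 = 2\mu^\xi\|u(t)\|^2$, as required. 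The preliminary step $\mu(\Phi_t(x)) = \mu(x)$ via $[\xi,\mu(x)]=0$ under the invariant identification $\Lie\simeq\Lie^*$ is also needed and correctly supplied; the concluding linear ODE $\dot g = i|\mu(x)|\,g$ then gives the claim.
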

To deduce proposition \ref{sec:action}, we apply this result at time
$\pi$ to the Hamiltonian action with momentum $\mu_{I(a), t}$
  introduced in (\ref{eq:momentI}).  Be careful that the action on the
prequantization bundle factorizes through a $SO (3)$-action only if $
\sum_{i \in I(a)} \ell_i$ is even.

\section{Semi-classical properties of the joint eigenstates} \label{sec:semi-class-prop}
 
Let $\Ga$ be an admissible graph with $n$ half-edges and let $(\ell_i)_{i
  =1 , \ldots , n}$ be a family of positive 
integers. For any integer $k \geqslant 1$, we defined in section
\ref{sec:definition_Ha} a family $$ \{ H_{a, k \ell-1}; \;  a \in
\Ei ( \Ga)\}  $$ of mutually commuting operators of
$\Hilbert_{k \ell -1}$. For any joint eigenvalue $E=(E_a) \in
\R^{\Ei(\Ga)}$ of the operators $k^{-2}H_{a,k\ell-1}$, introduce a unitary
eigenvector $\Psi_{E,k}$
$$ \frac{1}{k^{2}}H_{a, k \ell -1} \Psi_{E,k} = E_a \Psi_{E,k}, \qquad \forall a \in \Ei(\Ga) .$$
 Assume furthermore that $\ell$ satisfies assumptions
 (\ref{eq:hyp_existence}), (\ref{eq:hyp2})
and (\ref{eq:hyp1}). Then by theorem \ref{theo:geom-real-space}, $\Hilbert_{k \ell -1}$ is isomorphic to $H^0
( \module_\ell, L_\ell^k \otimes \delta_\ell)$ and by theorem
\ref{theo:main} the sequence 
$$ T_a = \Bigl( \frac{1}{k^2} V_k H_{a, k \ell -1} V_k^{-1} \Bigr)_{k
  =1,2, \ldots} $$
is a Toeplitz operator with principal symbol 
$$ h_a  = \bigl| \sum _{i \in I(a)} x_{i} \bigr|^2. $$
As a consequence we can describe precisely the eigenstates
$\Psi_{E,k}$ in the semi-classical limit. This is done in section \ref{sec:etats-propres}. The
next section is devoted to the Bohr-Sommerfeld conditions. 

\subsection{Eigenstates of the $H_a$} \label{sec:etats-propres}
First the eigenstates are microlocalised on
the level sets of the joint principal symbol $h:=(h_a) : \module_\ell
\rightarrow \R^{\Ei(\Ga)}$. Introduce the closed subset 
$$ \Lambda := \{ (x, E)/ \; h(x) = E    \}  $$ 
of $\module_\ell \times \R^{\Ei ( \Ga) }$.
\begin{theo} \label{theorem:microsupport}
For any  $ (x, E) \notin \Lambda$, there exist neighborhoods $U
$ of $x$ in $\module_\ell$ and $V$ of $E$ in $\R^{\Ei ( \Ga) } $ and a sequence $(C_N)$
of positive real numbers such that    
$$ | V_{k} \Psi_{E,k}  (y ) |  \leqslant C_N k^{-N} $$ 
for any integers $k$ and $N$, any  $y \in U$ and any joint eigenvalue
$E \in V$. The same result holds if we replace $V_{k}$ by $V_{k} (
V_{k} ^* V_{k})^{-\frac{1}{2}}$.
\end{theo}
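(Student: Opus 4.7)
The plan is to combine Theorem \ref{theo:main}, which identifies the rescaled operators $T_a = k^{-2} V_k H_{a,k\ell-1} V_k^{-1}$ with Toeplitz operators of principal symbol $h_a$, with the elliptic parametrix construction of Toeplitz calculus and standard pointwise estimates coming from the Bergman kernel of $L_\ell^k \otimes \delta_\ell$.

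Fix $(x_0, E_0) \notin \La$. Since $h(x_0) \neq E_0$, there exists some $a_0 \in \Ei(\Ga)$ with $h_{a_0}(x_0) \neq E_{0,a_0}$. Choose a neighborhood $U$ of $x_0$ in $\module_\ell$ and a neighborhood $V$ of $E_0$ in $\R^{\Ei(\Ga)}$ small enough that $|h_{a_0}(y) - E_{a_0}| \geqslant c > 0$ for all $y \in U$, $E \in V$. Let $\chi \in \Ci (\module_\ell)$ equal $1$ on a smaller neighborhood $U_0$ of $x_0$ and be supported in $U$. The first step is to construct a microlocal parametrix for the semi-classical operator $T_{a_0} - E_{a_0} \id$. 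Since $h_{a_0} - E_{a_0}$ does not vanish on the support of $\chi$, the Toeplitz operator $S_k^{(0)}(E)$ with principal symbol $\chi / (h_{a_0} - E_{a_0})$ satisfies, by the composition rule stated after Theorem \ref{theo:main},
$$S_k^{(0)}(E) (T_{a_0} - E_{a_0} \id) = \Pi_k M_\chi \Pi_k + k^{-1} R_k^{(1)}(E),$$
where $R_k^{(1)}(E)$ is a Toeplitz operator whose principal symbol depends smoothly on $E \in V$. Iterating, one corrects the symbol of the parametrix by successive terms of orders $k^{-1}, k^{-2}, \ldots$ so as to cancel the principal symbol of the remainder at each step; a Borel-type summation produces a Toeplitz operator $R_k(E)$, with symbol depending smoothly on $E \in V$, such that
$$R_k(E)(T_{a_0} - E_{a_0} \id) = \Pi_k M_\chi \Pi_k + S_k(E), \qquad \|S_k(E)\| = O(k^{-\infty}),$$
uniformly in $E \in V$.

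Since $V_k \Psi_{E,k}$ is an eigenvector of $T_{a_0}$ with eigenvalue $E_{a_0}$, applying both sides kills the left-hand side and yields
$$\|\Pi_k M_\chi \Pi_k V_k \Psi_{E,k}\|_{L^2} = O(k^{-\infty}) \, \|V_k \Psi_{E,k}\|_{L^2},$$
uniformly in $E \in V$; the factor $\|V_k \Psi_{E,k}\|_{L^2}$ is at most polynomial in $k$. To convert this $L^2$ microlocalization into a pointwise bound, for $y \in U_0$ one writes $(V_k\Psi_{E,k})(y) = (\Pi_k(\chi V_k\Psi_{E,k}))(y) + ([\Pi_k, M_\chi] V_k\Psi_{E,k})(y)$. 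The first term is controlled in pointwise norm by the $L^2$ estimate above together with the on-diagonal polynomial growth of the Bergman kernel, while the second term is $O(k^{-\infty})$ at $y$ by the standard off-diagonal exponential decay of the Bergman kernel, since $y$ is bounded away from the support of $d\chi$. This gives the required pointwise estimate on $U_0$, uniformly in $E \in V$. The normalized version $V_k (V_k^* V_k)^{-1/2}$ differs from $V_k$ by a factor whose pointwise norm is polynomially bounded in $k$, so the same estimate persists.

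The main obstacle is the uniformity in $E$: although the joint eigenvalues of the $k^{-2} H_{a,k\ell-1}$ depend on $k$ and range over a discrete set, the parametrix construction must be performed with a symbol depending smoothly on a continuous parameter $E$ in a compact neighborhood $\overline V$, and every remainder $S_k(E)$ produced in the iteration has to be controlled uniformly in that parameter. The secondary delicate point is the passage from the $L^2$-microlocalization to the pointwise decay, which requires invoking the precise on- and off-diagonal behaviour of the Bergman kernel on powers of $L_\ell \otimes \delta_\ell$ and absorbing the polynomial-in-$k$ losses into the $O(k^{-\infty})$ decay provided by the parametrix.
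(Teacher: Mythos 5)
Your parametrix argument is correct and is precisely the approach to which the paper defers (section~3 of \cite{oim_qm} and \cite{oim_hf}): using Theorem~\ref{theo:main}, you build an elliptic parametrix for $T_{a_0}-E_{a_0}\operatorname{id}$ uniformly for $E$ in a compact neighborhood, deduce $\|\Pi_k M_\chi\Pi_k V_k\Psi_{E,k}\|_{L^2}=O(k^{-\infty})$, and pass to the pointwise bound via the reproducing property and off-diagonal decay of the Bergman kernel. All of that is sound, including the uniformity in $E$.

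The last sentence, however, is not a correct argument. The operator $(V_k^*V_k)^{-1/2}$ acts on the abstract space $\Hilbert_{k\ell-1}$, before $V_k$ is applied; it is not a multiplication operator and has no ``pointwise norm'', and a polynomial bound on its operator norm alone does not propagate a pointwise $O(k^{-\infty})$ estimate on an open set (that only controls the $L^2$ norm, not local decay). The fix is short: by the polar-decomposition identity $V_k(V_k^*V_k)^{-1/2}=(V_kV_k^*)^{-1/2}V_k$, and since $V_kV_k^*=(W_k^*W_k)^{-1}$, Theorems~\ref{sec:isoGS} and~\ref{sec:compF} show that $(k/2\pi)^{\ell/4}(V_kV_k^*)^{-1/2}$ is a Toeplitz operator on $H^0(\module_\ell,L_\ell^k\otimes\delta_\ell)$ with bounded positive symbol. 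A Toeplitz operator is pseudolocal (its Schwartz kernel has on-diagonal polynomial growth and off-diagonal $O(k^{-\infty})$ decay), so applying $(V_kV_k^*)^{-1/2}$ to $V_k\Psi_{E,k}$ preserves the pointwise $O(k^{-\infty})$ estimate on a slightly shrunk neighborhood. Equivalently, $k^{-2}U_kH_{a,k\ell-1}U_k^*=(V_kV_k^*)^{-1/2}T_a(V_kV_k^*)^{1/2}$ is again a Toeplitz operator with principal symbol $h_a$, and your parametrix argument applies verbatim to $U_k\Psi_{E,k}$.
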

Next since the family $(h_a)$ is an integrable system as stated in
theorem \ref{theo:int_system}, we
can determine modulo $O(k^{-\infty})$ the eigenvectors $  \Psi_{E,k}$ on a neighborhood of
the the level set $h^{-1}(E)$ when $E$ is regular value of $h$.

Consider as in theorem \ref{theo:int_system} the regular set
$\mreg_{\ell, \Ga}
=\la ^{-1} (\Int \De(\ell, \Ga))$. To describe uniformly the various
eigenstates, introduce the submanifold 
$$ \Lambda^{\operatorname {reg}}  := \La \cap (\mreg_{\ell, \Ga} \times \R^{\Ei (
  \Ga)} )$$
of $\module_\ell \times \R^ {\Ei ( \Ga)}$. Denote by $L$ and
$\delta$ the  pull-backs of the bundles $L_{\ell}$
and $\delta_{\ell}$
by the projection $ \module_\ell \times \R^{\Ei (\Ga) } \rightarrow \module_\ell $.

Let $x \in \mreg$. Then $E = h(x)$ is a regular value of $h$ and $h^{-1}
(E)$ is a Lagrangian torus described in theorem
\ref{sec:tore_action}. Let us introduce the isomorphism
\begin{gather} \label{eq:2}
  \varphi_x :    \delta^{2}_x \rightarrow \wedge^{{\operatorname{top} },0} (T^*_x \module)
\rightarrow \wedge ^{\operatorname{top} } (T_x h^{-1} (E) )  \otimes \C 
\end{gather}
Here the first arrow is the isomorphism making $\delta$ a square root
of the canonical bundle and the second arrow is the restriction from $T_x \module $ to $T_x
h^{-1} (E)$. Finally denote by $\mu_E $ the volume element of 
$h^{-1} (E)$ which is invariant by the torus action generated by the
$\la_a$ and normalised by 
\begin{gather} \label{eq:normalisation}
\int_{h^{-1}
(E)} \mu_E = 1.
\end{gather}
Since there is no prefered orientation of $h^{-1}(E)$, $\mu_E$ is
uniquely defined up to a sign.

\begin{theo} \label{theo:quasi_mode}
For any  $(x, E) \in \Lambda^{\operatorname {reg}}$, there exist
neighborhoods $U$ of $x$ in $\mreg_{\ell, \Ga}$ and  $V$ of $E$ in
$\R^{\Ei(\Ga)}$, a section $F$
  of $L \rightarrow U \times V$, a sequence $(g( \cdot, k))_k$ of
  sections of $\delta
  \rightarrow U \times V$, a sequence $(D_k)$ of complex numbers and a
  sequence 
  $(C_N)$ of positive numbers such that for any integer $k$, any
  integer $N$, any $y \in U$ and any joint eigenvalue $E \in V$,
\begin{gather} \label{eq:approx}
 \Bigl| V_k \Psi_{E,k}  (y )  - D_k \Bigl(\frac{k}{2\pi}
\Bigr)^\frac{\dim \module}{8} F^k (y,E) g(
y, E,k)  \Bigr|  \leqslant C_N k^{-N}
\end{gather}
Furthermore
\begin{enumerate}
\item $| D_k | = 1 + O( k^{-\infty})$
\item For any $E$, the restriction of $F$ to $h^{-1}(E) \times \{E \}$
  is flat with a constant unitary norm,  $\overline{\partial} F $
  vanishes to any order along 
  $\La^{\operatorname {reg}}$ and for all  $x \notin \La$, $|F(x)| < 1$.

\item the sequence $(g( \cdot, k))_k$ admits an asymptotic expansion  
$$ g(y,E,  k) = g_0 (y,E) + k^{-1} g_1 (y,E ) + k^{-2} g_2 ( y,E) + \ldots
$$
for the $\Ci$ topology, where each coefficient  $g_i \in \Ci ( U \times
V , \delta)$ is such that 
$\overline{\partial}  g_i$ vanishes to any order along $ \La^{\operatorname {reg}}$. 

\item the restriction of the first coefficient $g_0$ to
  $\La^{\operatorname {reg}}$ is such that 
$$ \varphi_y \bigl( (g_0 (x,E) )^{\otimes 2}\bigr) = \mu_E (y) ,
\qquad \forall (y, E) \in \La^{\operatorname {reg}}  $$
where  $\varphi_y$ and $\mu_E$ are defined in (\ref{eq:2}) and (\ref{eq:normalisation}).
\end{enumerate}
The same result holds   if we replace $V_{k}$ by $V_{k} (
V_{k} ^* V_{k})^{-\frac{1}{2}}$.
\end{theo}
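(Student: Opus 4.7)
The plan is to apply the general microlocal machinery for Toeplitz operators developed in \cite{oim_qm} and \cite{oim_hf} to the integrable system $(T_a)_{a \in \Ei(\Ga)}$ provided by Theorem \ref{theo:main}, using the symplectic description of the regular level sets from Section \ref{sec:sympl-geom-polyg}. First, microlocalization (Theorem \ref{theorem:microsupport}) already confines $V_k \Psi_{E,k}$ modulo $O(k^{-\infty})$ to a neighborhood of $h^{-1}(E)$, so it suffices to produce a candidate Lagrangian section on a small open set $U \times V$ meeting $\La^{\operatorname{reg}}$ and to check that it solves the joint eigenvector problem modulo $O(k^{-\infty})$; uniqueness of the eigenvector (simplicity of the joint spectrum, Theorem \ref{sec:spectre}) will then force the true eigenvector to agree with the ansatz up to a phase $D_k$ of modulus $1 + O(k^{-\infty})$, which yields (\ref{eq:approx}) and claim (1).

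For the ansatz itself, I would first construct the phase section $F \in \Ci(U \times V, L)$ by the standard recipe: on each torus $h^{-1}(E)$ the prequantum bundle $L_\ell$ is flat (because the torus is Lagrangian) with trivial holonomy along the orbits of $\tfrac12 \la_a$ by Proposition \ref{sec:action} combined with the integrality assumption (\ref{eq:hyp1}), so there is a unitary flat section $F_E$ on $h^{-1}(E)$; one then extends $F_E$ to a neighborhood as an almost-holomorphic section of modulus strictly less than $1$ off $\La$, exactly as in \cite{oim_qm} and \cite{BPU}. This gives property (2). The half-form factor $g(\cdot, k)$ is then sought as a formal series $g_0 + k^{-1} g_1 + \ldots$ of almost-holomorphic sections of $\delta$, and plugging $F^k g$ into the joint eigenvalue equations for $T_a$ produces, at leading order in $k$, exactly the condition $h_a = E_a$ on $\La$ (which is forced since we work near $\La$), and at the subleading order a system of transport equations along the Hamiltonian vector fields $X_{h_a}$.

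The crucial point is that, by Theorem \ref{theo:main}, each $T_a$ has vanishing subprincipal symbol. Using the symbolic calculus for Toeplitz operators acting on Lagrangian sections as in \cite{oim_hf}, the transport equation for $g_0$ reduces to
\[
\dLie_{X_{h_a}} (g_0^{\otimes 2}) = 0, \qquad a \in \Ei(\Ga),
\]
after identification via $\varphi_y$ of $g_0^{\otimes 2}$ with a complex density on the torus; equivalently $g_0^{\otimes 2}|_{h^{-1}(E)}$ must be invariant under the full torus action of Theorem \ref{sec:tore_action}. Since the torus acts freely and transitively on each regular fibre, this determines $g_0|_{h^{-1}(E)}$ up to a single multiplicative constant, and the normalization (\ref{eq:normalisation}) combined with the requirement $\|\Psi_{E,k}\| = 1$ fixes the modulus of that constant (and hence of $D_k$) via the standard $L^2$-norm asymptotic for Lagrangian sections, $\|F^k g_0\|^2 = (2\pi/k)^{\dim \module/4} \int_{h^{-1}(E)} \varphi(g_0^{\otimes 2}) + O(k^{-1})$, which accounts for the prefactor $(k/2\pi)^{\dim \module/8}$ and yields statement (4). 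The higher-order coefficients $g_1, g_2, \ldots$ are obtained by inductively solving inhomogeneous transport equations along the same torus, using that the differential operator obtained from the subprincipal contribution is again the Lie derivative along $X_{h_a}$ (plus a term proportional to $g_0$), so each step only requires inverting this operator on functions orthogonal to the invariant ones, exactly as in \cite{oim_qm}.

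The main obstacle I anticipate is not the formal construction but checking that the ansatz genuinely approximates the eigenvector: one needs the transport equations to be solvable at every order, which requires that the obstruction (the mean of the right-hand side over each torus orbit) vanishes. The vanishing subprincipal symbol makes the first obstruction automatic, but at the next orders one has to control cohomological obstructions coming from the subsubprincipal symbols of the $T_a$'s. These can be handled once one knows that $(T_a)$ genuinely forms a semi-classical integrable system in the sense of \cite{oim_qm}, i.e.\ that the $T_a$ commute exactly (not only modulo $O(k^{-\infty})$), which here is the content of Theorem \ref{sec:spectre}. The invariance and exact commutativity together guarantee that at every order the ansatz can be extended, and the comparison with $\Psi_{E,k}$ is then concluded by the simplicity of the joint spectrum together with Theorem \ref{theorem:microsupport}.
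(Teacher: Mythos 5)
Your overall plan matches the paper's, which simply defers to section~3 of \cite{oim_qm} and section~3 of \cite{oim_hf}: treat $(T_a)$ as an exactly commuting family of Toeplitz operators whose principal symbols form an integrable system and whose subprincipal symbols vanish (theorem~\ref{theo:main}), build a WKB-type ansatz $F^k g$ adapted to the regular Lagrangian torus $h^{-1}(E)$, solve the half-form transport equations order by order using torus invariance, fix the normalization from the $L^2$-asymptotics of Lagrangian sections, and then invoke the simplicity of the joint spectrum (theorem~\ref{sec:spectre}) together with microlocalization (theorem~\ref{theorem:microsupport}) to identify the ansatz with $V_k\Psi_{E,k}$ up to a phase. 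The specific points you highlight---the transport equation $\dLie^\delta_{X_a} g_0 = 0$ and its resolution by the free transitive torus action of theorem~\ref{sec:tore_action}, the role of exact commutativity in guaranteeing solvability at all orders, the power $(2\pi/k)^{\dim\module/4}$ for $\|F^kg_0\|^2$---all agree with the paper's remarks~\ref{rem:app} and~\ref{rem:symbol}.

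One point is incorrect as written: you claim that by proposition~\ref{sec:action} and assumption~(\ref{eq:hyp1}) the prequantum bundle $L_\ell$ has \emph{trivial} holonomy along $h^{-1}(E)$, and use this to produce a globally defined flat unitary section $F_E$ on the whole torus. Proposition~\ref{sec:action} actually gives the holonomy along the $\la_a$-orbit as $e^{2i\pi\gamma_a}$ with $\gamma_a = \tfrac12\bigl(\la_a + \sum_{i\in I(a)}\ell_i\bigr)$; for a generic joint eigenvalue $E$ one has $\la_a = \sqrt{E_a} = (\varphi(a)+1)/k$, so $\gamma_a$ is typically not an integer and the holonomy of $L_\ell$ is not trivial. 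What the joint eigenvalues satisfy are the Bohr--Sommerfeld conditions~(\ref{eq:BS_conditions}), which concern $L^k_\ell\otimes\delta_\ell$ rather than $L_\ell$, and these are what govern the global patching discussed in section~\ref{sec:bohr-somm-cond}. Since the theorem only asks for $F$ on the local product $U\times V$, where $h^{-1}(E)\cap U$ is contractible and a flat unitary section always exists, the argument survives; but the global trivial-holonomy assertion should be removed, and the Bohr--Sommerfeld matching should be deferred to the patching step rather than invoked for the construction of the local phase $F$.
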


This follows from section 3 of \cite{oim_qm} and section 3 of
\cite{oim_hf}. In the following remarks, we discuss the various
components of the result and how it should be modified in the case
where the subprincipal symbols do not vanish.  

\begin{rem} \label{rem:app}
The estimations  (\ref{eq:approx}) are controlled by the
  restrictions  of $F$ and the 
$g_i$ to  $\La^{\operatorname {reg}}$. More precisely consider
a section $\tilde{F}$ of  $L \rightarrow U \times V$ and a sequence 
$(\tilde{g} (\cdot,k ))$ of sections of $\delta \rightarrow U \times
V$ which satisfy conditions 2. and 3. of theorem \ref{theo:quasi_mode}, then the equalities 
$$ F |_{\La^{\operatorname {reg}}} =
  \tilde{F}|_{\La^{\operatorname {reg}}} \quad \text{ and } \quad g_i|_{\La^{\operatorname {reg}}} = \tilde
g_i|_{\La^{\operatorname {reg}}} \quad \text{ for } \quad i = 1 ,
\ldots , N$$
imply that 
$$ \tilde{F}^k (x, E) \tilde{g} (x, E , k) =   F^k (x, E)
g (x, E , k)  + O(k^{-N-1} )$$ 
uniformly on any compact set of $U \times V$. \qed
\end{rem}

\begin{rem} \label{rem:symbol}
The restriction of $F$ to $\La^{\operatorname {reg}}$ is determined up
to a factor $f(E)$ by the condition that it is flat along $h^{-1}(E)
\times \{E \}$ for any $E$. Equivalently, this condition may be written  as
$$ \nabla^L_{X_a} F = 0 \quad \text{ on } \La^{\operatorname {reg}},
\qquad \forall a \in \Ei ( \Ga)$$
where $X_a$ is the Hamiltonian vector field of $h_a$. In a similar way
the function $g_0$ satisfies the transport equations
\begin{gather} \label{eq:transport}
 \dLie^\delta_{X_a} g_0 = 0  \quad \text{ on }  \La^{\operatorname {reg}},
\qquad \forall a \in \Ei ( \Ga) .
\end{gather}
Furthermore the sections $F$ and $g_0$ are normalized in condition
2. and 4. in such a way that $|D_k|= 1 + O(k^{-1})$.  This follows from theorem 3.2 of \cite{oim_hf} and condition
(\ref{eq:normalisation}).  \qed
\end{rem}

\begin{rem} 
If the subprincipal symbols $h_a^1$ of the $T_a$ had not vanish, we
should replace the transport equations (\ref{eq:transport}) by 
$$\dLie^\delta_{X_a} g_0 + i  h^1_a g_0 = 0  \quad   \text{ on } \La^{\operatorname {reg}},
\qquad \forall a \in \Ei ( \Ga) . \qed$$ 
\end{rem}

\subsection{Bohr-Sommerfeld conditions} \label{sec:bohr-somm-cond}

Bohr-Sommerfeld conditions are the conditions to patch together the
sections  $$F^k( \cdot, E) g_0 ( \cdot,
E)$$ of theorem \ref{theo:quasi_mode} modulo an error
$O(k^{\frac{\operatorname{dim} \module }{8} -1})$. By remarks
\ref{rem:app} and \ref{rem:symbol}, this can be achieved if and only if the restriction to $h^{-1}
(E)$ of the bundle $L_\ell ^k \otimes
\delta_\ell $ is trivial as a flat bundle. Here we consider the flat
structure of $$\delta_\ell
\rightarrow h^{-1} (E)$$ defined in such a way that the local
sections $g_0 (\cdot,
E)$ satisfying  condition 4. of theorem \ref{theo:quasi_mode} are
flat. The flat structure of 
$$L^k_\ell \rightarrow h^{-1}(E)$$ 
is the one induced by the connection of the prequantum bundle. 
It is flat because the curvature of $L$ is the symplectic
form and $h^{-1}(E)$ is Lagrangian. 

For any internal edge $a$ of $\Ga$, consider an integral curve of the
hamiltonian flow of $h_a$ in $h^{-1}(E)$. The family of these curves is a base of the first
group of homology of $h^{-1} (E)$. One deduces the holonomy of
these curves from proposition \ref{sec:action}. We obtain the
Bohr-Sommerfeld conditions.  

\begin{prop} 
 The restriction to $h^{-1}
(E)$ of the bundle $L_\ell ^k \otimes
\delta_\ell $ is a trivial flat bundle if and only if 
\begin{gather} \label{eq:BS_conditions}
  \frac{k}{2} \Bigl( E_a^{\frac{1}{2}} +  \sum_{ i \in I(a)}  \ell_i \Bigr) +
  \frac{1}{2} \epsilon_a (E) \in \Z , \qquad \forall a \in \Ei(\Ga)
\end{gather}
where $\epsilon_a (E)$
is equal to  $0$ or $1$ according to whether the restriction of $\delta_\ell$
to the integral curve of $h_a$ is trivial or not. 
\end{prop}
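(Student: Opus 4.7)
The plan is to reduce the triviality of the flat bundle $L_\ell^k \otimes \delta_\ell|_{h^{-1}(E)}$ to a computation of holonomies along a basis of $H_1(h^{-1}(E), \Z)$. Since $h^{-1}(E)$ is a compact connected Lagrangian torus (by Theorem \ref{theo:int_system} together with Theorem \ref{sec:tore_action}, as $E \in \Int \De (\ell, \Ga)$) and $L_\ell^k \otimes \delta_\ell$ is a Hermitian flat line bundle on it, triviality is equivalent to the vanishing of the holonomy character on $H_1$.

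First, I would exhibit a distinguished basis of $H_1(h^{-1}(E), \Z)$. By Theorem \ref{sec:tore_action}, the Hamiltonian flows $\Phi_{a,t}^{\la_a/2}$ of the functions $\tfrac{1}{2}\la_a$, $a \in \Ei(\Ga)$, are $2\pi$-periodic and define a free action of $\tore^{\Ei (\Ga)}$ on $\mreg_{\ell, \Ga}$ whose orbits coincide with the fibres of $h$. Hence the loops $\gamma_a : t \in [0, 2\pi] \mapsto \Phi_{a,t}^{\la_a/2}(x)$, for $a \in \Ei(\Ga)$, form a $\Z$-basis of $H_1(h^{-1}(E), \Z)$. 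Note that $\gamma_a$ coincides with the loop $t \in [0, \pi] \mapsto \Phi_{a,t}^{\la_a}(x)$ used in Proposition \ref{sec:action}.

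Second, I would compute the holonomies. The holonomy of $L_\ell$ along $\gamma_a$ is by Proposition \ref{sec:action} equal to $e^{2 i \pi \ga_a(x)}$ with
\[
\ga_a(x) = \tfrac{1}{2}\Bigl(\la_a(x) + \sum_{i \in I(a)} \ell_i\Bigr),
\]
and on $h^{-1}(E)$ we have $\la_a = E_a^{1/2}$, so the holonomy of $L_\ell^k$ along $\gamma_a$ equals $\exp\bigl(i\pi k(E_a^{1/2} + \sum_{i \in I(a)} \ell_i)\bigr)$. The holonomy of $\delta_\ell$ along $\gamma_a$ is, by the very definition of the flat structure on $\delta_\ell|_{h^{-1}(E)}$ (for which the sections satisfying the transport equations (\ref{eq:transport}) are flat) and the normalisation of $\epsilon_a(E)$ given in the statement, equal to $(-1)^{\epsilon_a(E)} = e^{i\pi \epsilon_a(E)}$.

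Finally, multiplying the two holonomies, the holonomy of $L_\ell^k \otimes \delta_\ell$ along $\gamma_a$ is
\[
\exp\Bigl( 2 i \pi \bigl[ \tfrac{k}{2}\bigl( E_a^{1/2} + \sum_{i \in I(a)} \ell_i\bigr) + \tfrac{1}{2} \epsilon_a(E) \bigr]\Bigr),
\]
and requiring this to equal $1$ for every $a \in \Ei(\Ga)$ yields exactly the conditions (\ref{eq:BS_conditions}). The only delicate point is the interpretation of the $\gamma_a$ as generators of $H_1$: this uses freeness of the torus action on $\mreg_{\ell, \Ga}$ and matching of dimensions ($h^{-1}(E)$ being a Lagrangian in a $2(n-3)$-dimensional symplectic manifold with a free action of $\tore^{n-3}$). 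Everything else reduces to citing Proposition \ref{sec:action} and unpacking the definition of $\epsilon_a(E)$.
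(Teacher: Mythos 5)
Your proof is correct and follows essentially the same route as the paper: it identifies the orbits of the torus action as a $\Z$-basis of $H_1(h^{-1}(E),\Z)$, reads off the $L_\ell^k$-holonomy from Proposition \ref{sec:action}, and folds in the $\delta_\ell$-holonomy via the definition of $\epsilon_a(E)$. The paper's own discussion is even terser (it simply says one deduces the holonomy from Proposition \ref{sec:action}), and your version supplies exactly the missing bookkeeping, including the correct reparametrisation $\Phi^{\la_a/2}_{2\pi} = \Phi^{\la_a}_{\pi}$ matching the loop used in Proposition \ref{sec:action}.
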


The indices $\epsilon_a(E)$ 
replace the Maslov indices for the usual Bohr-Sommerfeld conditions. 
It is proved in section 3 of \cite{oim_qm} (cf. also section 3 of
\cite{oim_hf}) that for any couple $(E,k)$ which satisfies the Bohr-Sommerfeld condition, there exists a joint eigenvalue $\tilde{E}
(E,k)$ of the operators $ T_a$ such that  
$$ \tilde{E} (E,k) = E + O ( k^{-2})$$
Let us compare this with the description of the spectrum of the $H_{a
  , k \ell -1}$ given in theorem \ref{sec:spectre}. The joint
eigenvalues are indexed by  the admissible colorings $\varphi$ of $\Ga$
such that $\varphi  (i) = k \ell_i -1$ for any half-edge $i$. Observe
that the parity conditions satisfied at each vertex are alltogether equivalent to  
\begin{gather} \label{eq:BS_coloriage_v2_1}
 \varphi ( 1 ) +   \varphi ( 2 ) + \ldots +
 \varphi ( n ) \in 2 \Z
\end{gather}
and 
\begin{gather} \label{eq:BS_coloriage_v2}
 \varphi (a)  + \sum_{i \in I(a)}    \varphi ( i ) \in 2 \Z
\end{gather}
for any internal edge $a$ of $\Ga$. Here the equation
(\ref{eq:BS_coloriage_v2_1}) is satisfied because $\varphi  (i) = k
\ell_i -1$ and we assumed (\ref{eq:hyp1}). Comparing the
Bohr-Sommerfeld conditions (\ref{eq:BS_conditions})
with the equations (\ref{eq:BS_coloriage_v2}), we obtain the following
\begin{prop}  
$ \epsilon_a = \#  I(a) + 1 \mod 2\Z. $
\end{prop}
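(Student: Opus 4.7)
The plan is to match the Bohr-Sommerfeld condition (\ref{eq:BS_conditions}) with the explicit joint spectrum of the $H_{a,k\ell-1}$ given in Theorem \ref{sec:spectre}, using the admissibility identity (\ref{eq:BS_coloriage_v2}) to extract the parity of $\epsilon_a$.

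First, by Theorem \ref{sec:spectre}, each admissible coloring $\varphi$ of $\Ga$ with $\varphi(i) = k\ell_i - 1$ for all half-edges $i$ produces a joint eigenvalue of $T_a := k^{-2}V_k H_{a,k\ell-1}V_k^{-1}$, namely $\tilde E_a = k^{-2}\varphi(a)(\varphi(a)+2) = k^{-2}\bigl[(\varphi(a)+1)^2 - 1\bigr]$. In particular, on the region where $\varphi(a)$ grows linearly in $k$ (which is the regime pertinent to points of $\mreg_{\ell,\Ga}$), one has $k\tilde E_a^{1/2} = \varphi(a) + 1 + O(k^{-1})$.

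Next, these eigenvalues and the Bohr-Sommerfeld points both form, asymptotically in $k$, regular lattices in $\operatorname{Int}\Delta(\ell,\Ga)$ with identical $O(1/k)$ spacing in each coordinate $E_a^{1/2}$. Since for every BS point $(E,k)$ there is a joint eigenvalue $\tilde E(E,k) = E + O(k^{-2})$, the two lattices coincide up to $O(k^{-2})$ translations, and we may attach to each admissible $\varphi$ a BS point $E$ with $E_a^{1/2} = \tilde E_a^{1/2} + O(k^{-2})$. Substituting into (\ref{eq:BS_conditions}) and using $k\ell_i = \varphi(i) + 1$ yields
\begin{equation*}
\frac{k}{2}E_a^{1/2} + \frac{k}{2}\sum_{i \in I(a)}\ell_i + \frac{1}{2}\epsilon_a
= \frac{\varphi(a)+1}{2} + \frac{1}{2}\sum_{i \in I(a)}\bigl(\varphi(i)+1\bigr) + \frac{1}{2}\epsilon_a + O(k^{-1}),
\end{equation*}
which, multiplied by $2$, lies in $2\Z + O(k^{-1})$.

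Finally, the admissibility parity relation (\ref{eq:BS_coloriage_v2}), $\varphi(a) + \sum_{i \in I(a)}\varphi(i) \in 2\Z$ (obtained by summing the Clebsch-Gordan parity conditions $p+\ell+m \in 2\Z$ over the vertices on the $I(a)$-side of the edge $a$, whereupon each internal edge of that subtree contributes twice while $a$ and the half-edges in $I(a)$ contribute once), reduces the congruence to
\begin{equation*}
\#I(a) + 1 + \epsilon_a \in 2\Z + O(k^{-1}).
\end{equation*}
Since the left-hand side is an integer independent of $k$, the error vanishes and we conclude $\epsilon_a \equiv \#I(a) + 1 \pmod 2$. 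The main subtlety is step two: justifying that the BS lattice and the exact spectral lattice match globally (not just that BS points produce eigenvalues, which is the one-sided statement quoted from \cite{oim_qm}, \cite{oim_hf}); this follows from a density argument since both families are asymptotically $O(1/k)$-spaced of the same total count indexed by admissible data.
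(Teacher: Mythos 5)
Your proof takes the same route as the paper: compare the arithmetic of the Bohr--Sommerfeld condition (\ref{eq:BS_conditions}) with the explicit joint spectrum from Theorem \ref{sec:spectre}, use the identity $k\ell_i = \varphi(i)+1$, and let the admissibility parity relation (\ref{eq:BS_coloriage_v2}) (which you correctly derive by summing Clebsch--Gordan parities over the $I(a)$-subtree) cancel the coloring-dependent terms. The arithmetic you carry out, and the final deduction that an integer in $2\Z + O(k^{-1})$ must lie in $2\Z$, is sound.

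One remark on your flagged subtlety in step two. You argue from an arbitrary admissible coloring toward a Bohr--Sommerfeld point, which indeed requires the two-sided lattice correspondence you invoke by a density argument. This can be sidestepped: it suffices to start from a single Bohr--Sommerfeld pair $(E,k)$ with $E\in\Int\De(\ell,\Ga)$ (which exists for $k$ large), apply the quoted one-sided result $\tilde E(E,k)=E+O(k^{-2})$, and note that $k E_a^{1/2}$ is \emph{exactly} an integer by (\ref{eq:BS_conditions}). Since $k\tilde E_a^{1/2}=\sqrt{(\varphi(a)+1)^2-1}=\varphi(a)+1+O(k^{-1})$ and $E_a^{1/2}-\tilde E_a^{1/2}=O(k^{-2})$, for $k$ large $kE_a^{1/2}=\varphi(a)+1$ exactly, and the parity computation then proceeds with no error terms at all. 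This is in effect what the paper records in the subsequent sentence $\tilde E(E,k)=E-k^{-2}$, $\varphi(a)=k\la_a-1$. So the density/bijection argument, while defensible, is not needed; a single well-chosen Bohr--Sommerfeld point pins down $\epsilon_a$, since $\epsilon_a$ is locally constant in $E$ over the regular region.
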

Furthermore, if $(E,k)$ satisfies the Bohr-Sommerfeld condition,
then  
$$ \tilde{E}(E,k) = E - k^{-2}$$
and $\tilde{E}(E,k)$ is the joint eigenvalue associated to the coloring
$\varphi$ of $\Ga$ defined by $ \varphi ( a ) = k \la_a - 1$. 

\section{Scalar product of Lagrangian sections} \label{sec:scal-prod-lagr}

In this part we compute the asymptotics  of the scalar product of two
Lagrangian sections. The first section is devoted to algebraic
preliminaries.  
\subsection{A half-form pairing} \label{sec:pairing}
Consider  a symplectic vector space $(E^{2n}, \om)$ with a compatible
positive complex structure $J$. Given two transversal Lagrangian
subspaces $\Ga_1$ and $\Ga_2$, one has a sesquilinear non-degenerate
pairing
$$  \bigl( \wedge^n \Ga_1^*
\otimes \C \Bigr) \times  \bigl( \wedge^n \Ga_2^*
\otimes \C \Bigr) \rightarrow \C, \qquad \al, \be \rightarrow i^{n(2-n)}(\pi_1^* \al
\wedge \pi_2^*\overline{\be}) / \om^n
$$
where $\pi_1$ (resp. $\pi_2$) is the projection from $E$ onto $\Ga_1$
(resp. $\Ga_2$) with kernel $\Ga_2$ (resp. $\Ga_1$). Next for $i =1$ or $2$, the restriction from $E$ to $\Ga_i$ is
an isomorphism $$ \wedge^{n, 0} E^* \rightarrow \wedge^n \Ga_i^*
\otimes \C.$$ Composing these maps with the previous pairing, we obtain
a non-degenerate sesquilinear pairing
$$( \cdot, \cdot)_{\Ga_1, \Ga_2} :  \wedge^{n, 0} E^* \times \wedge^{ n,0} E^* \rightarrow \C
$$
We will consider a square root of this pairing. To remove the sign
ambiguity, let us compare it with the usual Hermitian product of the line $
\wedge^{n, 0} E^*$ 
$$ (\al , \be ) = i^{n ( 2 - n)} \al \wedge \overline{\be} / \om^n .$$ 
Recall that for any endomorphism $A$ of $J\Ga_1$ which is symmetric with respect to the scalar
product $\om (X, JY)$,  the graph of the map $JA: J \Ga_1 \rightarrow
\Ga_1$ is a Lagrangian subspace of $E = J \Ga_1 \oplus \Ga_1$. This defines a bijective correspondence between the set of Lagrangian
subspaces transversal to $\Ga_1$ and the affine space of symmetric
endomorphisms of $J\Ga_1$. One checks the following lemma with
a straightforward computation. 

\begin{lemma} \label{lem:form-pairing}
Let $A : J \Ga_1 \rightarrow J \Ga_1$ be the
  endomorphism   associated to $\Ga_2$. Then 
$$ ( \al , \be)_{\Ga_1, \Ga_2} = 2^{-n} \det ( 1 -i A) (\al , \be) $$ 
for any $\al, \be \in  \wedge^{n, 0} E^*$. 
\end{lemma}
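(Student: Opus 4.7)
The plan is to diagonalize $A$ and compute both pairings explicitly on a generator of the one-dimensional line $\wedge^{n,0}E^*$. Since both pairings are sesquilinear and the line has complex dimension one, it suffices to verify the identity for a single non-zero $\alpha$ with $\beta=\alpha$.

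Using that $A$ is symmetric with respect to the K\"ahler metric $g(X,Y)=\omega(X,JY)$, I would choose a $g$-orthonormal basis $(e_1,\ldots,e_n)$ of $J\Gamma_1$ diagonalizing $A$, so that $Ae_j=a_je_j$. Then $(Je_1,\ldots,Je_n)$ is an orthonormal basis of $\Gamma_1$, the vectors $f_j:=e_j+a_jJe_j$ form a basis of $\Gamma_2$, and the $(1,0)$-forms $\epsilon^j:=e^j+i(Je)^j$ trivialize $(E^{1,0})^*$. I take $\alpha=\epsilon^1\wedge\cdots\wedge\epsilon^n$.

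The main steps are then: (i) restrict $\alpha$ to $\Gamma_1$ to find $\alpha|_{\Gamma_1}=i^n(Je)^1|_{\Gamma_1}\wedge\cdots\wedge(Je)^n|_{\Gamma_1}$, and restrict $\bar\alpha$ to $\Gamma_2$ using $f^j(e_k+a_kJe_k)=\delta_{jk}$ to get $\bar\alpha|_{\Gamma_2}=\prod_j(1-ia_j)f^1\wedge\cdots\wedge f^n$; (ii) describe the projections: for $v=\sum(b_je_j+c_jJe_j)$ one has $\pi_1(v)=\sum(c_j-a_jb_j)Je_j$ and $\pi_2(v)=\sum b_jf_j$, whence the pull-back identities $\pi_1^*((Je)^j|_{\Gamma_1})=(Je)^j-a_je^j$ and $\pi_2^*(f^j)=e^j$; (iii) take the wedge product: in the expansion of $\bigwedge_j((Je)^j-a_je^j)$, every surviving term after wedging with $e^1\wedge\cdots\wedge e^n$ must have no $e^j$-factor, so only the $(Je)^1\wedge\cdots\wedge(Je)^n$ contribution survives, giving
\[
\pi_1^*\alpha\wedge\pi_2^*\bar\alpha=i^n\prod_j(1-ia_j)\,(Je)^1\wedge\cdots\wedge(Je)^n\wedge e^1\wedge\cdots\wedge e^n;
\]
(iv) compute $\alpha\wedge\bar\alpha$ the same way by shuffling $\epsilon^1\wedge\cdots\wedge\epsilon^n\wedge\bar\epsilon^1\wedge\cdots\wedge\bar\epsilon^n$ into $\bigwedge_j(\epsilon^j\wedge\bar\epsilon^j)$ and using $\epsilon^j\wedge\bar\epsilon^j=-2ie^j\wedge(Je)^j$. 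Finally I would divide: $(\alpha,\alpha)_{\Gamma_1,\Gamma_2}/(\alpha,\alpha)=\pi_1^*\alpha\wedge\pi_2^*\bar\alpha/(\alpha\wedge\bar\alpha)$, yielding $2^{-n}\prod_j(1-ia_j)=2^{-n}\det(1-iA)$.

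The main obstacle is bookkeeping the shuffle signs in (iii) and (iv). Both rearrangements produce signs of the form $(-1)^{n(n\pm1)/2}$, and combined with the power of $i$ coming from the $i^{n(2-n)}$ prefactor in the definitions they must cancel exactly in the ratio; this is where the formula $2^{-n}\det(1-iA)$ (rather than some $\pm$ or $i$ variant) is extracted, and I would double-check it on $n=1$ and $n=2$ before trusting the general bookkeeping.
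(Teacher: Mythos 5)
The paper gives no proof of this lemma; it only remarks that it follows by a ``straightforward computation,'' and your proposal is exactly that computation, carried out correctly. The sign bookkeeping you were worried about does check out: with your conventions one finds
$$ \pi_1^*(\al|_{\Ga_1}) \wedge \pi_2^*(\overline{\al}|_{\Ga_2}) = (-1)^{n(n+1)/2}\, i^n \prod_j(1-ia_j)\,\Om, \qquad \al \wedge \overline{\al} = (-1)^{n(n-1)/2}(-2i)^n\,\Om $$
where $\Om = e^1 \wedge (Je)^1 \wedge \cdots \wedge e^n \wedge (Je)^n$, and the ratio collapses to $2^{-n}\prod_j(1-ia_j) = 2^{-n}\det(1-iA)$ as required (the common factor $i^{n(2-n)}/\om^n$ in both pairings cancels). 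Your reduction to a single generator with $\be = \al$ is justified since both sides are sesquilinear forms on a complex line.
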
 

In particular the pairing induced by the couple $(\Ga_1, J\Ga_1)$ is
the same as the Hermitian product up to a factor $2^{-n}$.

Let us consider a one-dimensional Hermitian line $\delta$ with an isomorphism
$\varphi : \delta^{\otimes 2} \rightarrow \wedge ^{n, 0} E^*$. Then
one defines the sesquilinear pairing of $\delta$ 
by 
$$ ( \al, \be )_{\Ga_1, \Ga_2} = \sqrt{( \varphi(\al^2) , \varphi
 ( {\be} ^2) )_{\Ga_1, \Ga_2} }, \qquad  \forall \al \in \delta, \;
\forall \be \in {\delta}.$$
The square root is determined in such a way that the pairing depends
continuously on $\Ga_1$ and $\Ga_2$ and coincides with the scalar
product up to a factor $2^{-n/2}$  when $\Ga_2 = J \Ga_1$. Since the set of Lagrangian transversal to $\Ga_1$ is
affine, the square root is well-defined.  

\subsection{Asymptotics of scalar product of Lagrangian section} 

Let $(M^{2n}, \om)$ be a K{\"a}hler manifold with a prequantization
bundle $L$ and a half-form bundle $\delta$. Let $\Ga_1$ and $\Ga_2$ be
two Lagrangian submanifolds. For $i =1$ or
$2$, consider a sequence
$$ F_i^k a_i ( \cdot, k) \in \Ci (M, L^k \otimes \delta), \qquad k
=1,2,\ldots  $$
where 
\begin{itemize} 
\item $F_i$ is a section of $L$ such that its restriction at $\Ga_i$
is flat of norm $1$, $\overline \partial F_i$ vanishes at infinite order
along $\Ga_i$ and $|F_i( x)| < 1 $ if $x \notin \Ga_i$.  
\item $a_i(\cdot,k )$ is a sequence of $\Ci ( M, \delta)$ which admits
   full asymptotic expansion for the $\Ci$ topology 
$$ a_i ( \cdot , k ) = a_{i,0} + k^{-1}
  a_{i,1}  + k^{-2} a_{i,2} + \ldots $$ with coefficients $a_{i, \ell}$ in $\Ci (M, \delta)$. 
\end{itemize} 
Recall that the scalar product of two sections of
$L^k \otimes \delta $ is defined by
$$ ( \Psi_1, \Psi_2) = \int _M (\Psi_1 (x) , \Psi_2 (x) ) _{ L^k_x \otimes
  \delta_x} \mu_M (x) $$ 
where $\mu_M$ is the Liouville measure $\om^n/ n!$. 

\begin{theo} \label{theo:asympt-scal-prod}
Assume the intersection of $\Ga_1$ and $\Ga_2$ is transversal and
consists of a
  single point $y$, then
$$\Bigl( F_1^k a_1 ( \cdot, k), F_2^k a_2 ( \cdot, k) \Bigr)  \sim \Bigl( \frac{2 \pi}{k} \Bigr)^n \bigl( F_1 (y) ,
F_2 (y) \bigr)_{ L_y}^k \; \bigl( a_{1,0} (y) , a_{2,0} (y)
\bigr)_{T_y\Ga_1, T_y\Ga_2} $$
where $( \cdot, \cdot )_{ T_y\Ga_1, T_y\Ga_2}$ is the sesquilinear pairing
  $\delta_y \times \delta_y \rightarrow \C$ defined in section  \ref{sec:pairing}.
\end{theo}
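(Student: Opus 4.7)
The plan is to reduce the scalar product to a complex stationary phase integral localized at $y$. Since $|F_i(x)|<1$ for $x\notin\Gamma_i$ and $\Gamma_1\cap\Gamma_2=\{y\}$, the product $|F_1(x)|^k |F_2(x)|^k$ decays exponentially in $k$ outside any neighborhood $U$ of $y$. Hence up to an $O(k^{-\infty})$ remainder, the scalar product reduces to an integral over a K\"ahler chart centered at $y$, and it suffices to carry out the analysis there.

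In such a chart, choose a holomorphic frame $t$ of $L$ with $|t|^2=e^{-\varphi}$, where $\varphi$ is a local K\"ahler potential (so that $\om=i\partial\bar\partial\varphi$), and a holomorphic frame $\tau$ of $\delta$. Write $F_i=e^{ih_i}t$ and $a_i(\cdot,k)=\alpha_i(\cdot,k)\,\tau$. The hypotheses on $F_i$ force $h_i$ to be an almost-holomorphic phase, i.e.\ $\bar\partial h_i$ vanishes to infinite order on $\Gamma_i$, and $h_i$ is characterized by $\operatorname{Im} h_i=-\varphi/2$ on $\Gamma_i$ and $\operatorname{Im} h_i>-\varphi/2$ off $\Gamma_i$. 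Setting
$$\Psi(z,\bar z):=h_1(z)-\overline{h_2(z)}+i\varphi(z,\bar z),$$
the localized scalar product becomes $\int_U e^{ik\Psi}\,b(z,k)\,d^{2n}z$, where $b$ is a smooth amplitude whose value $b(0)$ involves $(\alpha_{1,0}(y),\overline{\alpha_{2,0}(y)})\,|\tau(y)|^2$ multiplied by the Liouville Jacobian at the origin.

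Next I verify the hypotheses of the complex stationary phase lemma for $\Psi$. By the description of $h_i$, $\operatorname{Im}\Psi=(\operatorname{Im} h_1+\varphi/2)+(\operatorname{Im} h_2+\varphi/2)\geqslant 0$ with equality exactly at $z=0$; stationarity $d\Psi(0)=0$ is the combined Lagrangian condition for $\Gamma_1$ and $\Gamma_2$ at $y$; and the nondegeneracy of $\Psi''(0)$ is equivalent to the transversality $T_y\Gamma_1\cap T_y\Gamma_2=0$. Applying the lemma yields
$$\bigl(F_1^k a_1(\cdot,k),\,F_2^k a_2(\cdot,k)\bigr)\ \sim\ \Bigl(\frac{2\pi}{k}\Bigr)^{n}\bigl(F_1(y),F_2(y)\bigr)_{L_y}^{k}\,\frac{b(0)}{\sqrt{\det(\Psi''(0)/i)}},$$
once $e^{ik\Psi(0)}$ is identified with $(F_1(y),F_2(y))_{L_y}^{k}$.

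The main obstacle is to match the coefficient $b(0)/\sqrt{\det(\Psi''(0)/i)}$ with the half-form pairing $(a_{1,0}(y),a_{2,0}(y))_{T_y\Gamma_1,T_y\Gamma_2}$ of section \ref{sec:pairing}. Choosing a symplectic basis adapted to the decomposition $T_yM=J T_y\Gamma_1\oplus T_y\Gamma_1$, one writes $T_y\Gamma_2$ as the graph of the symmetric endomorphism $A$ of $JT_y\Gamma_1$; unwinding the definitions of $h_1$ and $h_2$ expresses $\Psi''(0)$ in terms of $A$ and produces the factor $2^{-n}\det(1-iA)$ of Lemma \ref{lem:form-pairing}. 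The sign of the square root must be pinned down by continuity in $\Gamma_2$ and tested in the reference case $\Gamma_2=J\Gamma_1$, where the integral reduces to the diagonal Bergman estimate and the two sides of the stated formula agree via the isomorphism $\varphi_y:\delta_y^{\otimes 2}\to\wedge^{n,0}T_y^*M$ of (\ref{eq:2}) together with the definition of the half-form pairing.
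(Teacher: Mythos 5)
Your proof follows essentially the same strategy as the paper: localize the scalar product at the intersection point $y$, apply the complex stationary phase lemma to the phase built from the prequantum data (with $\operatorname{Im}\Psi \geqslant 0$, stationary point and nondegenerate Hessian at $y$), and then identify the Hessian determinant factor with the half-form pairing of Lemma~\ref{lem:form-pairing}, fixing the branch of the square root by continuity from the reference case $\Ga_2 = J\Ga_1$. The only cosmetic difference is that the paper carries out the Hessian computation intrinsically via the connection one-forms $\al_i$ with $\nabla^L F_i = \tfrac{1}{i}\al_i \otimes F_i$ and the projections $q_i$ onto $T^{0,1}$, leading directly to $\det(\overline{q}_2 + q_1) = 2^n/\det(1-iA)$, whereas you expand in a local holomorphic frame with a K\"ahler potential; this is a presentational choice, not a different route.
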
 

\begin{proof} Let us write
$ ( F_1 (x) , F_2 (x) )_{L_x}  = e ^{i \varphi (x) }$.  One has to
estimate
\begin{gather} \label{eq:int}
 \int_M e^{ik \varphi (x) } (a_1 (x, k) , a_2 (x, k) )_{ \delta_x}
\; \mu_M (x) 
\end{gather}
This will be an application of the stationnary phase lemma. First
since $|F_i( x)| < 1 $ if $x \notin \Ga_i$, the
imaginary part of $\varphi (x)$ is positive if $x \notin  \Ga_1 \cap \Ga_2$.
Next to compute
the derivatives of the function $\varphi$, let us recall the content
of lemma 4.2 of
\cite{oim_hf}.  One has  
\begin{gather} \label{eq:derF}
 \nabla^L F_i = \frac{1}{i} \al_i \otimes F_i
\end{gather}
where  $\al_i$ is a 1-form 
vanishing along $\Ga_i$. Furthermore for any vector fields $X$ and $Y$, one has
at $x \in \Ga_i$, 
\begin{gather} \label{eq:der2F}
 \dLie _X \langle \al_i, Y \rangle (x)  = \om ( q_i X(x), Y (x))
\end{gather}
where $q_i$ is the projection of $ T_x M \otimes \C$ onto
$T^{0,1}_x M$ with kernel $T_x \Ga_i \otimes \C$. 
One deduces from (\ref{eq:derF}) that 
$$ d \varphi =   \overline{\al}_2 -\al_1  $$
So the point $y \in \Ga_1 \cap \Ga_2$ is a critical point of $\varphi$ and one computes the Hessian at $y$ by using
(\ref{eq:der2F}):
\begin{xalignat}{2} \notag
 \operatorname{Hess} \varphi (X,Y) & = \dLie_X \bigl( (    \overline{\al}_2
 -\al_1 ) (Y) \bigr) \\  \notag  & = \om ( \overline{q}_2 X - q_1  X ,
 Y ) \\  \notag
 & = - i \om ( J (\overline{q}_2 + q_1)  X , Y ) \\ \label{eq:hess}
& = i g ( 
 (\overline{q}_2 + q_1)  X , Y ) 
\end{xalignat} 
where $g$ is the metric $g(X,Y) = \om (X, JY)$. At the third line, we
used that $\overline{q}_2 X \in T^{1, 0}_y M = \ker ( J_y - i )$
whereas $q_1 X \in T^{0,1}_y M = \ker ( J_y + i )$. Since the
intersection of $\Ga_1$ and $\Ga_2 $ is transverse, the Hessian is
non-degenerate. Then stationnary phase lemma (cf. \cite{Ho1} chapter
7.7) leads the following
equivalent  for the integral (\ref{eq:int})
$$ \Bigl( \frac{2 \pi}{k} \Bigr)^n e^{ik \varphi(y) }
\operatorname{det}^{-\frac{1}{2}} \bigl[ - i \operatorname{Hess}
\varphi (
\partial_{x_j}, \partial_{x_k} )(y) \bigr]_{j,k =1, \ldots , 2n}   \bigl( a_{1,0} (y) , a_{2,0} (y)
\bigr)_{\delta_y} \rho (y) 
$$
where $x_1, \ldots , x_{2n}$ are local coordinates at $y$ and $\rho (y)$ is such that $$\mu_M (y) = \rho (y) dx_1 \wedge \ldots
\wedge dx_{2n}.$$ 
Here the square root of the determinant is determined on the space of
symmetric complex matrices with a positive real part  in such a way
that it is positive on the subset of real matrices. The Liouville measure $\om^n
/n!$ is also the Riemannian measure of $g$, so that
$$ \rho (y) =  \operatorname{det}^{\frac{1}{2}} \bigl[ g ( 
\partial_{x_j}, \partial_{x_k} )(y) \bigr]_{j,k =1, \ldots , 2n} $$
Then it follows from (\ref{eq:hess}) that
$$ \operatorname{det} \bigl[ - i \operatorname{Hess}
\varphi (
\partial_{x_j}, \partial_{x_k} )(y) \bigr]_{j,k =1, \ldots , 2n}
\rho ^{-2}(y) = \det (\overline{q}_2 + q_1).$$
Finally the determinant of $ \overline{q}_2 + q_1$ is easily computed
in terms of the map $ A : J T_y \Ga_1 \rightarrow J T_y \Ga_1$ such
that $T_y \Ga_2$ is the graph of $JA : J T_y \Ga_1 \rightarrow T_y
\Ga_1$. One has
$$   \det (\overline{q}_2 + q_1) = \frac{2^n}{\det ( 1 - i A)} $$
Comparing with lemma \ref{lem:form-pairing}, we get the final result. 
\end{proof}

\section{Asymptotics of $6j$-symbols} \label{sec:asympt-6j-symb}

Assume that $n=4$ and choose
$(\ell_i)_{i =1, \ldots, 4}$ satisfying condition
(\ref{eq:hyp2}) and (\ref{eq:hyp1}). The moduli space 
$$\module_\ell =  (S^2_{\ell_1} \times S^2_{\ell_2} \times
S^2_{\ell_3} \times S^2_{\ell_4}) 
\varparallel SU(2) $$
is a 2-dimensional sphere. 
Let $\Ga$ be the left graph of figure \ref{fig:graph}.
\begin{figure} 
\begin{center}
\begin{picture}(0,0)%
\includegraphics{figure7.pstex}%
\end{picture}%
\setlength{\unitlength}{2921sp}%
\begingroup\makeatletter\ifx\SetFigFont\undefined%
\gdef\SetFigFont#1#2#3#4#5{%
  \reset@font\fontsize{#1}{#2pt}%
  \fontfamily{#3}\fontseries{#4}\fontshape{#5}%
  \selectfont}%
\fi\endgroup%
\begin{picture}(4604,2124)(2089,-5173)
\put(3751,-3811){\makebox(0,0)[lb]{\smash{{\SetFigFont{9}{10.8}{\rmdefault}{\mddefault}{\updefault}4}}}}
\put(3751,-4861){\makebox(0,0)[lb]{\smash{{\SetFigFont{9}{10.8}{\rmdefault}{\mddefault}{\updefault}3}}}}
\put(2401,-4861){\makebox(0,0)[lb]{\smash{{\SetFigFont{9}{10.8}{\rmdefault}{\mddefault}{\updefault}2}}}}
\put(2401,-3811){\makebox(0,0)[lb]{\smash{{\SetFigFont{9}{10.8}{\rmdefault}{\mddefault}{\updefault}1}}}}
\put(6601,-3361){\makebox(0,0)[lb]{\smash{{\SetFigFont{9}{10.8}{\rmdefault}{\mddefault}{\updefault}4}}}}
\put(6601,-5011){\makebox(0,0)[lb]{\smash{{\SetFigFont{9}{10.8}{\rmdefault}{\mddefault}{\updefault}3}}}}
\put(5251,-3361){\makebox(0,0)[lb]{\smash{{\SetFigFont{9}{10.8}{\rmdefault}{\mddefault}{\updefault}1}}}}
\put(5251,-5011){\makebox(0,0)[lb]{\smash{{\SetFigFont{9}{10.8}{\rmdefault}{\mddefault}{\updefault}2}}}}
\end{picture}%
\caption{The graphs $\Ga$ and $\Ga'$} \label{fig:graph}
\end{center}
\end{figure}
 Denote by
$H_{k \ell -1}$ the operator
associated to its internal edge and by  $h \in \Ci (\module_\ell)$ the
associated symbol
$$ h ( [u_1,u_2,u_3,u_4 ] ) = | u_1 + u_2 |^2, \qquad
(u_1,u_2,u_3,u_4) \in S^2_{\ell_1} \times \ldots \times S^2_{\ell_4}$$
Then $h(\module_\ell )= [m, M] $ where
$$ m = \max ( |\ell_1 - \ell_2 | , |\ell_3 - \ell_4 | ), \quad M =
\min ( \ell_1 + \ell_2 , \ell_3 + \ell_4 ) $$
The fibres of $h$ are circles except for the two singular ones
$h^{-1}(m)$ and $h^{-1} (M)$ which consist of one point. 

Consider now the right graph $\Ga'$ of figure \ref{fig:graph} and
denote by $H'_{k \ell-1}$ and $h'$ the
associated operator and symbol. Introduce two orthonormal bases $(\Psi_{E,k})_E$ and
$(\Psi'_{E,k})_{E }$ of eigenvectors of
$H_{k\ell-1}$ and $H'_{k \ell-1}$ respectively. 
$$  \frac{1}{k^2} H_{k \ell -1}  \Psi_{E,k} = E \Psi_{E,k} , \qquad 
  \frac{1}{k^2} H'_{k \ell -1}  \Psi'_{E,k} = E \Psi'_{E,k} $$
By the results of part \ref{sec:etats-propres} these eigenvectors are
Lagrangian sections associated to the level sets of $h$ and $h'$. From
this one
can deduce the asymptotics of the scalar product
$$ ( \Psi_{E, k}, \Psi '_{E',k}) $$
when $h^{-1} (E)$ and $h'^{-1} (E')$ intersect transversally, by
applying theorem \ref{theo:asympt-scal-prod}.

\subsection{The result}

Assume first that $h^{-1} (E_0)$ and $h'^{-1} (E_0')$ do not
intersect. Then it follows from theorem \ref{theorem:microsupport}
that there exist neighborhoods $V$ and $V'$ of $E_0$ and $E'_0$
respectively and a sequence $(C_N)$ such that 
$$ | ( \Psi_{E, k}, \Psi '_{E',k}) |   \leqslant C_N k^{-N}, \quad
\forall N,\; \forall k $$
and for any eigenvalues $E \in \Sp (k^{-2}H_{k \ell-1}) \cap V$ and $E' \in \Sp
(k^{-2} H'_{k \ell-1})
\cap V'$.

Next to
understand better the possible configurations of the level sets of $h$
and $h'$, it is useful to
think of $\module_\ell $ as a space of tetrahedra, the point
$[u_1, u_2, u_3, u_4 ]$ representing the tetrahedron $x$ of $\mathfrak{su} (2)$ with vertices $$0,
u_1, u_1 + u_2, u_1 + u_2 + u_3. $$
Furthermore two tetrahedra are identified if there are related by an
orientation-preserving isometry. The lengths of the edges of the
tetrahedron $x$ are 
$$ \sqrt
{h(x)}, \; \sqrt {h'(x)} ,\; \ell_1, \; \ell_2,\; \ell_3 \text{ and
}\ell_4 .$$ 
The subset $C$ of coplanar tetrahedra is an embedded circle. It contains the four points where $h$ and $h'$
attain their maximum and minimum.  At the other points of $C$, the
fibres of $h $ and $h' $ are tangent. Outside of this circle, the fibres
intersect transversally, cf. figure \ref{fig:fibration}.

\begin{figure}[htbp]
\begin{center}
    \includegraphics[width=0.6\linewidth]{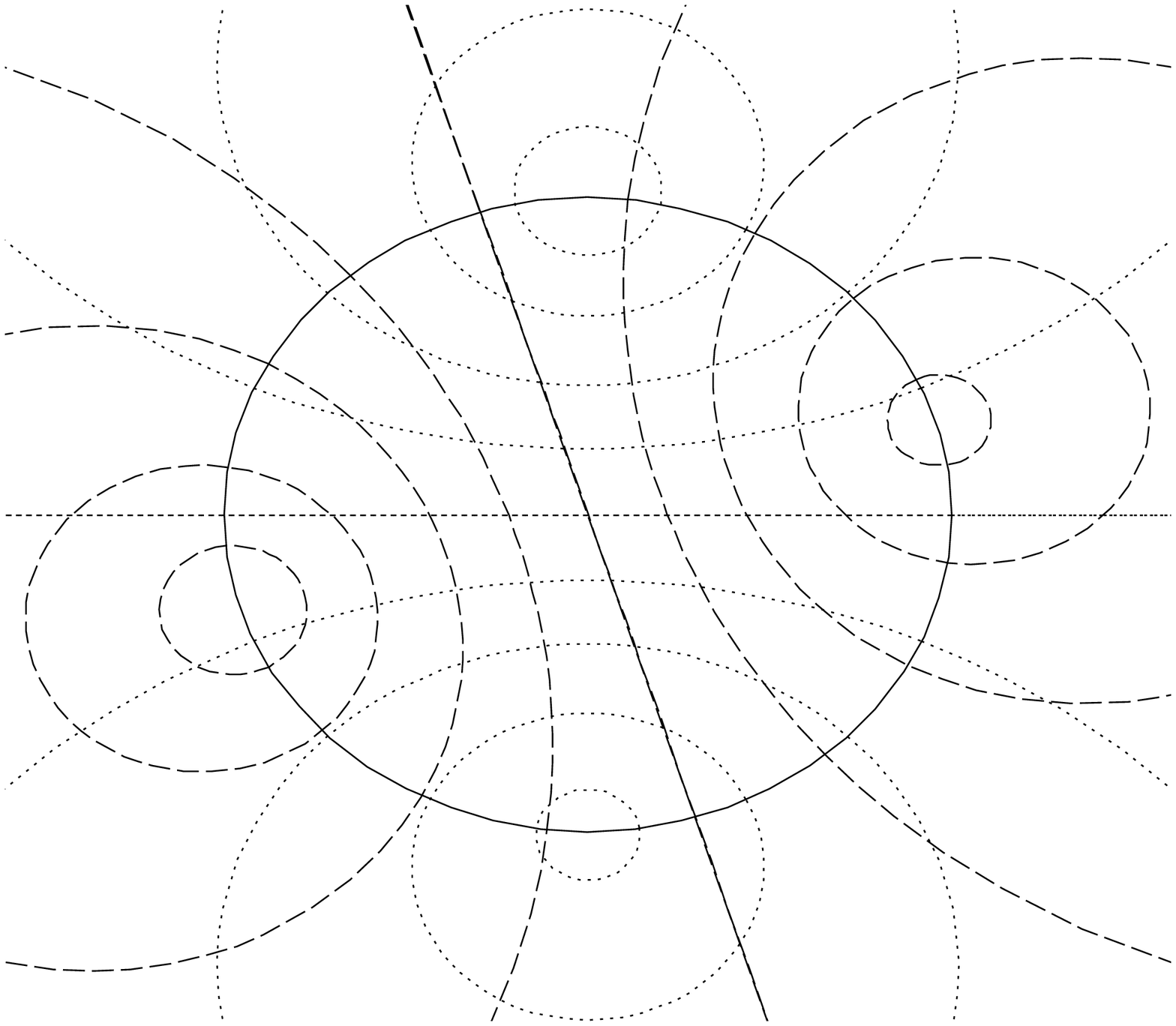}
    \caption{The two fibrations and the circle $C$}
    \label{fig:fibration}
  \end{center}
\end{figure}

Hence if $\sqrt{E}$, $\sqrt{E'}$, $\ell_1$, $\ell_2$, $\ell_3$ and 
$\ell_4$  are the edge lengths of a non-degenerate tetrahedron $\ta$, then $h^{-1} (E)$ and $h'^{-1} (E')$
intersect transversally at two points represented by $\ta$ and its
mirror image $\bar{\ta}$. Denote by $V(E,E')$ the volume of these
tetrahedra and by $\te(E, E')$ the sum
\begin{gather} \label{eq:deftetaEE}
\te(E,E') = \al  \sqrt{E} + \al' \sqrt{E'} +
  \sum_{i=1,\ldots ,4} \al_i \ell_i
\end{gather}
where  $\al$, $  \al'$, $\al_1$, $\al_2$, $\al_3$ and $\al_4$ are the
exterior dihedral angles (the exterior dihedral angle at an
edge is the angle in $[0,\pi]$ between the outward normal vectors of the
faces meeting at the edge).  

\begin{theo} \label{theo:asymptotism-6j}
Assume that $\sqrt{E_0}$, $\sqrt{E_0'}$, $ \ell_1$, $\ell_2$, $\ell_3$
and 
$\ell_4$ are the edge lengths of a non-degenerate tetrahedron. Then there exist neighborhoods $V$ and $V'$ of
$E_0$ and $E_0'$ respectively such that for every $k$ and for any
eigenvalues $E \in \Sp (k^{-2}H_{k \ell-1}) \cap V$ and $E' \in \Sp
(k^{-2} H'_{k \ell -1} )
\cap V'$, one has
$$  ( \Psi_{E, k}, \Psi '_{E',k}) = C_{k,E,E'} \sqrt{\frac{2}{3\pi}}
k^{-\frac{1}{2}} \frac{ (E E' )^{\frac{1}{4} }}{V(E,E')^{\frac{1}{2}}} \cos \bigl(k \te(E,E')/2  + \pi/4 \bigr) + O(k^{-\frac{3}{2}}) $$
where the $O$ is uniform with respect to $E$ and $E'$ and the
$C_{k,E,E'}$ are complex numbers of modulus 1.
\end{theo}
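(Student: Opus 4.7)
The plan is to represent $\Psi_{E,k}$ and $\Psi'_{E',k}$ as Lagrangian sections via Theorem \ref{theo:quasi_mode}, and to compute their scalar product via a two-point version of Theorem \ref{theo:asympt-scal-prod}. By Theorem \ref{theorem:microsupport}, $(\Psi_{E,k},\Psi'_{E',k}) = O(k^{-\infty})$ outside any neighborhood of $h^{-1}(E_0)\cap h'^{-1}(E'_0)$, which under the non-degeneracy hypothesis consists of exactly the two transversally intersecting points $\ta$ and $\bar\ta$ described before the statement. For $(E,E')$ near $(E_0,E'_0)$ these two points persist and remain transversal, so the problem reduces to summing two local contributions.

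Near each point $y\in\{\ta,\bar\ta\}$, Theorem \ref{theo:quasi_mode} gives
$$ V_k\Psi_{E,k}(z) \;\approx\; D_k\bigl(k/(2\pi)\bigr)^{1/4}F^k(z,E)\,g(z,E,k), $$
together with an analogous expression involving $F',g',D'_k$ for $V_k\Psi'_{E',k}$, modulo $O(k^{-\infty})$. Since $\dim\module_\ell=2$, one has $n=1$ in Theorem \ref{theo:asympt-scal-prod}; summing the stationary-phase contributions at the two intersection points yields, to leading order,
$$ D_k\overline{D'_k}\,\sqrt{2\pi/k}\,\sum_{y\in\{\ta,\bar\ta\}} \bigl(F(y,E),F'(y,E')\bigr)_L^{k}\,\bigl(g_0(y,E),g'_0(y,E')\bigr)_{T_yh^{-1}(E),\,T_yh'^{-1}(E')}. $$
This already produces the $k^{-1/2}$ rate, and the unit-modulus constant $C_{k,E,E'}$ of the statement will absorb the unknown unit factors $D_k\overline{D'_k}$ (after replacement of $V_k$ by its unitarization).

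For the amplitude, Lemma \ref{lem:form-pairing}, the normalization (\ref{eq:normalisation}), and the invariance of $\mu_E$ under the $2\pi$-periodic Hamiltonian flow of $\la/2=\sqrt h/2$ (Theorem \ref{sec:tore_action}) reduce the half-form pairing to an explicit function of $E$, $E'$, and the transversality defect $\{h,h'\}(y)$. A classical identity of polygon-space geometry identifies $|\{h,h'\}(y)|$ with the oriented volume $V(E,E')$ of the tetrahedron reconstructed at $y$ (up to a factor involving $\sqrt{EE'}$); combining this with the factors $2\sqrt E/\pi$ and $2\sqrt{E'}/\pi$ relating $\mu_E,\mu_{E'}$ to the Hamiltonian vector fields of $\sqrt h/2,\sqrt{h'}/2$ produces the amplitude $\sqrt{2/(3\pi)}\,(EE')^{1/4}/V(E,E')^{1/2}$ stated in the theorem.

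The phase is the delicate part. Since $F$ is flat of unit norm along $h^{-1}(E)$ and $F'$ along $h'^{-1}(E')$, the scalar $(F(y,E),F'(y,E'))_L^k$ at each of $\ta$ and $\bar\ta$ is determined by prequantum holonomies which, by Proposition \ref{sec:action}, are encoded by the action coordinates $\tfrac12(\la_a+\sum_{i\in I(a)}\ell_i)$. Geometrically, a path inside $h^{-1}(E)$ joining $\ta$ to $\bar\ta$ is a partial bending flow along the diagonal labelled $\sqrt E$, whose duration is precisely the exterior dihedral angle $\al$ of the tetrahedron at that edge; analogously one obtains $\al'$ from a path in $h'^{-1}(E')$. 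Putting these together with the edge contributions $\al_i\ell_i$ coming from the remaining four outer holonomies (again via Proposition \ref{sec:action}), one recovers the phase $e^{ik\te(E,E')/2}$ of (\ref{eq:deftetaEE}) at $\ta$. Since $\bar\ta$ is the mirror image of $\ta$, the contribution at $\bar\ta$ is the complex conjugate, and the sum of the two is the cosine. The additional $\pi/4$ shift originates from the square-root branch selection in Lemma \ref{lem:form-pairing} applied at the two mirror-related transversal points. This precise phase determination, already highlighted by Woodward and Taylor \cite{WoTa}, is the principal obstacle; the amplitude analysis is routine stationary phase combined with the symbolic calculus of Sections \ref{sec:scal-prod-lagr} and \ref{sec:semi-class-prop}.
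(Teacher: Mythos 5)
Your blueprint follows the paper's own route: microlocalization (Theorem \ref{theorem:microsupport}) reduces the problem to two transversal intersection points $\ta,\bar\ta$; Theorem \ref{theo:quasi_mode} represents each eigenvector as a Lagrangian section; the two-point stationary-phase formula of Theorem \ref{theo:asympt-scal-prod} with $n=1$ yields the $k^{-1/2}$ rate; and the answer is a holonomy factor times a half-form pairing at each of the two points. The $D_k$'s being absorbed into $C_{k,E,E'}$ is also how the paper proceeds. So the architecture is correct.

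However, two of the three ingredients you invoke are left as genuine gaps, and they are precisely the content of the paper's proof.

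First, the amplitude. The paper does not hand-wave a ``classical identity of polygon-space geometry'': it proves (Proposition \ref{prop:aire_angle}) that $\om(X,X')|_{\ta}=-\om(X,X')|_{\bar\ta}=3V(E,E')/\sqrt{EE'}$, where $X,X'$ are the Hamiltonian fields of $\tfrac12\sqrt h,\tfrac12\sqrt{h'}$. That factor of $3$, the explicit normalization, and the opposite signs at $\ta,\bar\ta$ are what produce $\sqrt{2/(3\pi)}$ and later drive the sign analysis. You record a plausible shape of this identity but do not derive it; without it the numerical constant is not obtained.

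Second, the phase. Your description of the parallel transport along $\ga$ as ``a partial bending flow whose duration is precisely the exterior dihedral angle $\al$'' is imprecise: the individual scalars $(F(\ta),F'(\ta))_L^k$ and $(F(\bar\ta),F'(\bar\ta))_L^k$ are not separately holonomies (they depend on arbitrary phase choices in $F,F'$); only their ratio is canonical, and it is the holonomy of $L$ along the closed loop $\ga\cup\ga'$. The paper identifies this with $\exp(-iA(E,E'))$ by Stokes, and $A(E,E')=-\te(E,E')+\pi\sum\ell_i$ is the area formula of Proposition \ref{prop:aire_angle}; Proposition \ref{sec:action} is not directly used here. More importantly, you acknowledge the $\pm$ ambiguity of the half-form square root but do not resolve it. The paper's resolution is a page-long argument: trivialize the tangent and half-form bundles on a neighborhood of the domain $D(E,E')$, write $X'(y(t))$ in polar form relative to $X$, track the angle $\te(t)\in(0,2\pi)$ with $\te(0)\in(0,\pi)$ and $\te(1)\in(\pi,2\pi)$, and compare formulas (\ref{eq:13})--(\ref{eq:15}) at the two endpoints to find that the relative phase of the half-form pairings is $e^{+i\pi/2}$ (not $e^{-i\pi/2}$). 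This is not a by-product of Lemma \ref{lem:form-pairing} alone; it uses the specific configuration of the two circle fibrations, and it is the step the authors single out as the delicate part you also flag but leave undone.
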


This result is proved in part \ref{sec:proof-6j-asymptotism}. The volume $V(E, E ')$ and the function
$\te (E, E ')$ appear in the computation of a holonomy and a
symplectic product, as it was already understood in \cite{WoTa}.
 
The $6j$-symbols are defined in terms of this scalar product by a
minor renormalisation 
$$ \left\{\begin{matrix} k \ell_1 -1 & k \ell_ 2 -1 & k \ell -1 \\  
 k \ell_3 -1 & k \ell_4 -1 & k \ell' -1 \end{matrix} \right\} = (-1)
^{ k( \ell_1 + \ell_2 + \ell_3 + \ell _4)/2}  \frac{(
\Psi_{E, k}, \Psi '_{E',k})}{ k \sqrt{ \ell \ell'}},
$$
with $E = \ell^2  -k^{-2}$ and  $E' = \ell'^2 - k^{-2}$. 
Here we assume that the bases $(\Psi_{E, k})$ and $(\Psi'_{E',k})$ are
suitably defined and not only up to a phase factor. If $\ell$, $\ell'$, $ \ell_1$, $\ell_2$, $\ell_3$
and $\ell_4$ are the edge lengths of a non
degenerate tetrahedron, we obtain
 $$ \left\{\begin{matrix} k \ell_1 -1 & k \ell_ 2 -1 & k \ell -1 \\  
 k \ell_3 -1 & k \ell_4 -1 & k \ell' -1 \end{matrix} \right\} \sim  \sqrt{\frac{2}{3\pi}}
k^{-\frac{3}{2}} V(\ell^2,\ell'^2)^{-\frac{1}{2}} \cos \bigl(k \te(\ell^2,\ell'^2)/2  + \pi/4 \bigr) $$
up to a phase factor, which is in agreement with the result of
Roberts in \cite{Ro}.

\subsection{Proof of theorem \ref{theo:asymptotism-6j}} \label{sec:proof-6j-asymptotism}

Consider a non-degenerate tetrahedron $\ta$ with edge lengths
$\sqrt{E}$, $\sqrt{ E '}$, $\ell_1$, $\ell_2$, $\ell_3$ and
$\ell_4$. Denote by $\bar \ta$ its mirror image. Then the
circle $h^{-1}(E)$ is the union of two segments delimited by $\ta$ and
$\bar {\ta}$. On one of these segments, $h'$ takes larger value than $E
' = h'(\ta) = h' (\bar{\ta})$. We shall denote it by $\ga$ and
orientate it according to the Hamiltonian
flow of $h$.  Consider in the same way the oriented segment
$\ga'\subset h'^{-1}(E')$. Then interchanging $\ta$ and $\bar{\ta}$
if necessary, one has $$\partial \ga = \bar{\ta} - \ta \quad \text{
  and } \quad \partial
\ga' = \ta - \bar{\ta}  .$$ 
Furthermore $\ga \cup \ga'$ divides the sphere $\module_\ell$ in two
domains. Let $D(E,E')$ be the one which does not contain $h^{-1}(E)
\setminus \ga$ and $h'^{-1}(E') \setminus \ga'$, cf. figure \ref{fig:domain}. We oriente it 
in such a way that the the symplectic area $\int_{D(E,E')} \om$ is
positive. The oriented boundary of $D(E, E')$ is then $-\ga \cup -\ga'$. 

\begin{figure}
\begin{center}
\begin{picture}(0,0)%
\includegraphics{figure9.pstex}%
\end{picture}%
\setlength{\unitlength}{3355sp}%
\begingroup\makeatletter\ifx\SetFigFont\undefined%
\gdef\SetFigFont#1#2#3#4#5{%
  \reset@font\fontsize{#1}{#2pt}%
  \fontfamily{#3}\fontseries{#4}\fontshape{#5}%
  \selectfont}%
\fi\endgroup%
\begin{picture}(2424,1864)(1489,-1619)
\put(1501,-661){\makebox(0,0)[lb]{\smash{{\SetFigFont{10}{12.0}{\rmdefault}{\mddefault}{\updefault}$\ta$}}}}
\put(3901,-661){\makebox(0,0)[lb]{\smash{{\SetFigFont{10}{12.0}{\rmdefault}{\mddefault}{\updefault}$\bar{\ta}$}}}}
\put(2551, 89){\makebox(0,0)[lb]{\smash{{\SetFigFont{10}{12.0}{\rmdefault}{\mddefault}{\updefault}$\ga$}}}}
\put(2551,-1561){\makebox(0,0)[lb]{\smash{{\SetFigFont{10}{12.0}{\rmdefault}{\mddefault}{\updefault}$\ga'$}}}}
\end{picture}%
\caption{The domain $D(E,E')$} \label{fig:domain}
\end{center}
\end{figure} 
 Then adapting the results of \cite{WoTa} on  spherical tetrahedra to the
simpler Euclidean case, we prove the following 

\begin{prop} \label{prop:aire_angle}
The symplectic area of $D(E,E')$ is 
$$  A (E, E') = - \te(E,E') +  \pi ( \ell_1 + \ell_2 + \ell_3 + \ell_4 ) $$
and 
\begin{gather*}
 \om (X , X' ) |_{\ta} = - \om (X, X' ) |_{\bar{\ta}} =
3 \frac{V(E,E')}{\sqrt{E E'}}
\end{gather*}
where $X$ and $X'$ are the Hamiltonian vector fields of $\frac{1}{2}\sqrt{h}$
and $\frac{1}{2}\sqrt{h'}$ respectively and $\te(E,E')$ has been
defined in (\ref{eq:deftetaEE}). 
\end{prop}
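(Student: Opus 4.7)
The proposition contains two independent formulas, which I would prove separately.

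For the formula $\om(X,X')|_\ta = 3V(E,E')/\sqrt{EE'}$: apply $X_{f(h)} = f'(h) X_h$ to write $X = (4\sqrt h)^{-1} X_h$ and similarly $X' = (4\sqrt{h'})^{-1} X_{h'}$, so that
\[ \om(X,X') = \frac{\{h,h'\}}{16\sqrt{hh'}} \]
up to the Poisson-bracket sign convention. The Poisson bracket is computed on the unreduced product $S^2_{\ell_1}\times\cdots\times S^2_{\ell_4}$: from $h = \ell_1^2 + \ell_2^2 + 2u_1\cdot u_2$ and $h' = \ell_2^2 + \ell_3^2 + 2u_2\cdot u_3$, only the brackets $\{u_2^a, u_2^b\}$ contribute. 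Using the Kirillov--Kostant--Souriau structure on $S^2_{\ell_2}$ with the normalization $\langle\xi,\eta\rangle = -\tfrac12\operatorname{tr}(\xi\eta)$, $\{h,h'\}$ becomes a universal constant times $\det(u_1,u_2,u_3)$, which is six times the signed volume of the tetrahedron $0, u_1, u_1+u_2, u_1+u_2+u_3$. As $\bar\ta$ is the mirror image of $\ta$, the signed volumes are opposite, giving the claimed formula with opposite signs at $\ta$ and $\bar\ta$.

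For the formula for $A(E,E')$, the plan is to compute the partial derivatives of $A$ with respect to $\sqrt E$ and $\sqrt{E'}$, then integrate. On the open dense subset of $\module_\ell$ where they are defined, introduce action-angle coordinates $(I,\te)$ for the integrable system generated by $h$, with $I = \tfrac12\sqrt h$ and $\te\in\R/2\pi\Z$; then $\om = dI\wedge d\te$. The arc $\ga$ has the form $\{I = \tfrac12\sqrt E,\ \te\in[\te_-,\te_+]\}$ and $\ga'$ is an arc of $h'^{-1}(E')$ with the same endpoints. Expressing $A$ as a double integral and differentiating, the boundary contributions at $\te_\pm$ cancel (the two curves meet there) and one obtains $\partial_{\sqrt E} A = -\tfrac12 \De\te_\ga$ with $\De\te_\ga = \te_+ - \te_-$. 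By proposition \ref{prop:flot_norm} together with the normalization of the $\mathfrak{su}(2)$ scalar product (for which $\exp(t\xi)$ with $|\xi|=1$ acts on the coadjoint orbit as rotation by $2t$), the flow of $\tfrac12\sqrt h$ at time $t$ corresponds to bending the tetrahedron by angle $t$ around the diagonal $u_1+u_2$. Bending from $\ta$ (interior dihedral $\vartheta$ at the diagonal) to the coplanar configuration in $h^{-1}(E)$ with maximal $h'$ (interior dihedral $\pi$) takes $\al = \pi - \vartheta$; by the mirror symmetry $\ta\leftrightarrow\bar\ta$ one gets $\De\te_\ga = 2\al$, hence $\partial_{\sqrt E} A = -\al$, and similarly $\partial_{\sqrt{E'}} A = -\al'$.

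Now apply the Euclidean Schl\"afli identity $\sum_e \ell_e\, d\te_e = 0$ (sum over the six edges), equivalently $\sum_e \ell_e\, d\al_e = 0$, with the four $\ell_i$ treated as fixed: this gives $\sqrt E\, d\al + \sqrt{E'}\, d\al' + \sum_i \ell_i\, d\al_i = 0$, whence $d\te(E,E') = \al\, d\sqrt E + \al'\, d\sqrt{E'} = -dA$. Integration yields $A(E,E') = -\te(E,E') + C$, and $C = \pi(\ell_1+\ell_2+\ell_3+\ell_4)$ is determined by evaluating both sides at a degenerate limit such as $E\to \max h$, where $\ga$ collapses to a point, $\ga'$ becomes the full fiber $h'^{-1}(E')$, $A$ reduces to the area of the bounded disk, and the dihedral angles of the limiting coplanar configuration are $0$ or $\pi$ and can be read off directly. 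The main obstacle is the geometric identification $\De\te_\ga = 2\al$: it requires careful bookkeeping of the double covering $SU(2)\to SO(3)$ and of the normalization conventions in order to translate Hamiltonian-flow time into a bending angle of the tetrahedron; this is precisely the step where one adapts the spherical computation of \cite{WoTa} to the Euclidean setting. The Poisson-bracket calculation in the first part and the Schl\"afli step in the second are direct once the geometric setup is in place.
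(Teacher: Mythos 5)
The paper does not spell out a proof of this proposition; it merely states that the claim follows by ``adapting the results of \cite{WoTa} on spherical tetrahedra to the simpler Euclidean case.'' Your reconstruction matches that intended route and is essentially correct: the Poisson-bracket computation for $\om(X,X')$ (only the $S^2_{\ell_2}$ factor contributes, giving a constant multiple of $\det(u_1,u_2,u_3) = 6V$) is right, and the area formula via $\partial_{\sqrt E}A=-\al$ in action-angle coordinates followed by the Euclidean Schl\"afli identity is exactly the Euclidean analogue of the Taylor--Woodward argument. One small imprecision: the normalizing constant $C$ should be determined at a degenerate limit where the \emph{tetrahedron} flattens, i.e.\ at the boundary of the set of admissible pairs $(E,E')$, where $\ta$ and $\bar\ta$ merge and $D(E,E')$ shrinks to a point, with all exterior dihedral angles $\al_i\to\pi$ at the four $\ell_i$-edges and $\al,\al'\to 0$ at the diagonals; the limit ``$E\to\max h$ with $E'$ fixed'' that you name is not on this boundary in general and does not keep $\ta=\bar\ta$, so it is the wrong degeneration to invoke, even though the resulting value of $C$ comes out the same.
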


By theorem \ref{theo:quasi_mode}, modifying the Lagrangian sections $\Psi_{E,k}$ and $\Psi'_{E',
  k'}$ by phase factors if necessary, one has on a neighborhood of
$D(E,E')$, 
\begin{gather*}
 \Psi_{E,k} = \Bigl( \frac{k}{2 \pi} \Bigr)^{\frac{1}{4}} F^k(E,
\cdot) g ( E,\cdot,k) + O( k^{-\infty}), \\ \Psi'_{E,k} = \Bigl( \frac{k}{2 \pi} \Bigr)^{\frac{1}{4}} F'^k(E,
\cdot) g' (E, \cdot,k) + O( k^{-\infty})
\end{gather*}
Then by theorem \ref{theo:asympt-scal-prod}, one has
\begin{gather} \label{eq:equiv}
 (\Psi_{E,k}, \Psi_{ E',k} ) \sim \Bigl( \frac{2 \pi}{k} 
\Bigr)^{\frac{1}{2}}\sum_{ x= \ta, \bar{\ta}} \bigl( F(E,x), F'(E',x)
\bigr)^k_{L_x} \bigl( g_0 (E,x), g_0'(E',x) \bigr)_{E,E', x} .
\end{gather}

Here $(\cdot, \cdot)_{E,E',x}$ is the  sesquilinear pairing of $\delta_x$
defined on part \ref{sec:pairing} with the Lagrangian subspaces $T_x h^{-1} (E)$ and
$T_x h'^{-1} (E ')$ of $T_x \module_\ell$. Denote by $\varphi$ the
isomorphism between $\delta^2_x$ and $\wedge^{1,0} T^*_x
\module_\ell$. By definition of the pairing one has 
\begin{gather} \label{eq:12}
 ( s, s' )_{E, E', x} ^2= 
i  \frac{ \varphi(s^2) (e  ) \; \overline{\varphi(s'^2) (e' )} }{ \om (e, e') }, \qquad \forall
\; s, s' \in \delta_x  
\end{gather}
for any nonvanishing vectors $e \in T_x h^{-1} (E)$ and
$e' \in T_x h'^{-1} (E ')$. 
It follows from condition 4. of theorem
\ref{theo:quasi_mode} that for $x = \ta$ or  $\bar{\ta}$, one has
\begin{gather} \label{eq:eqg0}
 \varphi ( g_0^2 (E,x) ) ( X(x) ) =  \varphi ( g_0'^2 (E',x) ) (
X'(x) ) = \frac{1}{2\pi}
\end{gather}
where $X$ and $X'$ are the Hamiltonian vector fields of $\frac{1}{2}\sqrt{h}$
and $\frac{1}{2}\sqrt{h'}$. Then we deduce from (\ref{eq:12}) that 
\begin{gather} \label{eq:pairing_hf} 
\bigl(g_0 (E,x), g_0'(E',x)\bigr)_{E,E', x} = \frac{1}{ 2 \pi}
\frac{e^{i \pi / 4} }{ \sqrt{\om  (X , X' )
   |_{x}} }
\end{gather}
 with the suitable determination of the
square root. So we deduce from the second part of proposition
\ref{prop:aire_angle} that both terms of the sum in equation
(\ref{eq:equiv}) has the
same modulus. We will prove that their phase difference is 
$$ k \bigl( \te (E, E ') -  \pi ( \ell_1 + \ell_2 + \ell_3 + \ell_4 )\bigr) + \pi/2
$$ 
Taking into account that $\ell_1 + \ell_2 + \ell_3 + \ell_4 $ is an even
integer, this will end the proof of theorem
\ref{theo:asymptotism-6j}.

 Since $F(E, \cdot)$ and $F'(E', \cdot)$ are flat along $\ga$ and
$\ga'$ respectively, one has
$$  \bigl( F(E,\bar \ta), F'(E',\bar \ta) \bigr)_{L_{\ta}} = H(E,E')\;
\bigl( F(E,\ta ), F'(E',\ta) \bigr)_{L_{\bar{\ta}}} $$
where $H(E,E') \in \C$ is the holonomy of $L$ along $\ga \cup \ga'$. Since $L$
has curvature $\frac{1}{i} \om$ and $-(\ga \cup \ga')$ is the
boundary of $D(E,E')$, one has 
$$H(E,E') = \exp (-iA (E,E'))$$
where $A(E , E ')$ is the symplectic area computed in proposition
\ref{prop:aire_angle}.  
Furthermore it follows from equation (\ref{eq:pairing_hf}) that 
$$ \bigl(g_0 (E,\bar{\ta}), g_0'(E',\bar{\ta})\bigr)_{E,E', \bar{\ta}}
= e^{\pm i  \pi/2}  \bigl(g_0 (E,\ta), g_0'(E',\ta)\bigr)_{E,E',
  \ta}$$
It happens that the undetermined sign is positive and the proof relies
uniquely on the configuration of the level sets, as they appear in
figure \ref{fig:domain}.

To see this, trivialise the tangent bundle of $\module_\ell$ on a neighborhood of $D(E,
E ')$ in such a way that the vector fields $X/ \sqrt {\om (X, JX)}$ and
  $JX / \sqrt {\om (X, JX)}$ are send to constant vectors that we denote
    by $e$ and $f$. The symplectic and complex structures
    are constant in this trivialisation
$$ \om (e, f) =1 , \quad Je = f $$
Trivialise also the half-form bundle, in such a way that the constant
half-form $s$ squares to $e ^* + i f ^*$, where $( e ^* , f^ *) $ is
the dual base of $(e,f)$. Then one may explicitely compute the
sesquilinear pairing associated to the Lagrangian lines generated
by $e$ and $f_\te = (\cos \te) f - (\sin \te )e$. By formula
(\ref{eq:12}) we have   
$$ (  s,  s )^2_{\R e, \R f_{\te}} =  \frac{e ^{-i \te }
}{\cos \te }$$
Choosing the determination of the square root as in part \ref{sec:pairing}, we obtain for $\te \in ( -\pi
/2, \pi /2 )$, 
\begin{gather} \label{eq:13}
 (  s,  s )_{\R e, \R f_{\te}} =  \frac{e ^{-i \te /2}
}{\sqrt{\cos \te }}
\end{gather}
Next introduce a parametrisation $x(t)$ of $\ga$ with $x(0) = \ta$
and $x(1) = \bar{ \ta}$. Then $$X (x (t)) = |X (x (t))  |\;  e $$
where $| X | = \sqrt{ \om (X, JX)}$. And modifying the sign of $g_0$ if necessary, one deduce from equation (\ref{eq:eqg0}) that
\begin{gather} \label{eq:14}
 g_0 ( E, x (t) ) =  \bigl(2 \pi | X (x(t))|\bigr)^{-\frac{1}{2}} \; s
\end{gather}

\begin{figure}
\begin{center}
\begin{picture}(0,0)%
\includegraphics{figure10.pstex}%
\end{picture}%
\setlength{\unitlength}{3158sp}%
\begingroup\makeatletter\ifx\SetFigFont\undefined%
\gdef\SetFigFont#1#2#3#4#5{%
  \reset@font\fontsize{#1}{#2pt}%
  \fontfamily{#3}\fontseries{#4}\fontshape{#5}%
  \selectfont}%
\fi\endgroup%
\begin{picture}(7216,2867)(1193,-8619)
\put(4501,-6061){\makebox(0,0)[lb]{\smash{{\SetFigFont{10}{12.0}{\rmdefault}{\mddefault}{\updefault}{\color[rgb]{0,0,0}$h^{-1}(E)$}%
}}}}
\put(5851,-7711){\makebox(0,0)[lb]{\smash{{\SetFigFont{10}{12.0}{\rmdefault}{\mddefault}{\updefault}{\color[rgb]{0,0,0}$\te(1/3)$}%
}}}}
\put(1951,-6361){\makebox(0,0)[lb]{\smash{{\SetFigFont{10}{12.0}{\rmdefault}{\mddefault}{\updefault}{\color[rgb]{0,0,0}$\te(1)$}%
}}}}
\put(2851,-6961){\makebox(0,0)[lb]{\smash{{\SetFigFont{10}{12.0}{\rmdefault}{\mddefault}{\updefault}{\color[rgb]{0,0,0}$\te(2/3)$}%
}}}}
\put(4501,-7861){\makebox(0,0)[lb]{\smash{{\SetFigFont{10}{12.0}{\rmdefault}{\mddefault}{\updefault}{\color[rgb]{0,0,0}$h'^{-1}(E')$}%
}}}}
\put(7201,-7111){\makebox(0,0)[lb]{\smash{{\SetFigFont{10}{12.0}{\rmdefault}{\mddefault}{\updefault}{\color[rgb]{0,0,0}$\te(0)$}%
}}}}
\end{picture}%
\caption{the angle $\te$} \label{fig:conf}
\end{center}
\end{figure}

Parametrize $-\ga'$ by $y (t)$ with $y(0) = \ta$
and $y(1) = \bar{ \ta}$. 
Then the configuration (cf. figure \ref{fig:conf}) of the level sets of
$h$ and $h'$ implies that 
$$ X'(y(t)) = r(t) \bigl( (\cos \te (t) ) e + (\sin \te (t)) f \bigr)
$$
where $r$ is a positive function and $ \te $ takes its values in $(0, 2
\pi)$ with $\te (0) \in ( 0, \pi )$ and $ \te (1) \in ( \pi, 2 \pi
)$. By equation (\ref{eq:eqg0})  
\begin{gather} \label{eq:15}
g_0' ( E ', y (t) ) =  \bigl(2 \pi r(t) \bigr)^{-\frac{1}{2}}  e ^{-i
  \te(t) /2} s
\end{gather}
Finally the angle between the lines generated by $X(\ta)$ and $X' (
\ta)$ being $\te(0) - \pi/2$, we deduce from equations (\ref{eq:13}),
(\ref{eq:14}) and (\ref{eq:15}) that 
\begin{xalignat*}{2}
 (g_0 (E, \ta), g'_0 ( E', \ta))_{E, E', \ta} =  &  \frac{e^{- i (
     \frac{\te(0)}{2} - \frac{\pi}{4} ) }}{ \sqrt {\cos ( \te (0) -
   \pi/2)} } . \frac{ e^{i \frac{\te (0)}{2}} }{ 2 \pi \sqrt {|X( \ta)| \; r (0) }}  
\\ = & \frac{e^{i \pi/4}
}{ 2 \pi } \Bigl( \cos ( \te (0) - \pi/2)  \;  | X (\ta) | \; |
X' ( \ta ) | \Bigr)^{-\frac{1}{2}}  
\end{xalignat*}
In the same way we obtain that 
\begin{xalignat*}{2}  
(g_0 (E, \bar \ta), g'_0 ( E',\bar  \ta))_{E, E', \bar \ta} = &  \frac{e^{- i (
     \frac{\te(1)}{2} - \frac{3\pi}{4} ) }}{ \sqrt {\cos ( \te (1) -
   3\pi/2)} } . \frac{ e^{i \frac{\te (1)}{2}} }{ 2 \pi \sqrt {|X( \ta)| \; r (1) }}  
\\ = & 
\frac{e^{i 3 \pi/4}
}{ 2 \pi } \Bigl( \cos ( \te (1) - 3\pi/2)  \;  | X (\bar \ta) | \; |
X' ( \bar \ta ) | \Bigr)^{-\frac{1}{2}}  
\end{xalignat*}
So the phase difference between the two pairings is $\pi /2$.

\section{Semi-classical reduction with subprincipal estimates} \label{sec:reduct-semi-class}

\subsection{Quantum reduction} \label{sec:data_reduction}

Let $(M, \om)$ be a connected compact K{\"a}hler manifold $(M, \om)$ with
a prequantization bundle 
$L \rightarrow M$ with curvature $\frac{1}{i} \om$ and a half-form
bundle $( \delta, \varphi)$. Here we denote by $\varphi$ the
line bundle isomorphism $\delta^2 \rightarrow \dcan T^* M$. By assumption it
preserves both the Hermitian and holomorphic structures. Let $G$ be a compact connected Lie group
acting on $M$ in a Hamiltonian way. Denote by $\mu : M
\rightarrow \Lie^*$ the moment map.
We assume that the action lifts to the prequantization bundle in such
a way that
the infinitesimal action on sections of $L$ is given by  
$$ \nabla_{\xi^\#} + i \mu^\xi,  
\quad \forall \xi \in \Lie .$$
We assume furthermore that the action preserves the complex structure
and lifts to the half-form bundle in such a way that $\varphi$ is equivariant.
Under these assumptions, the group $G$ acts naturally on the space
$H^0 (M , L^k \otimes \delta) $ for any positive integer $k$,  the
infinitesimal action being given by the Kostant-Souriau operators
(\ref{eq:KS}). We denote by $$H_G^0 ( M, L^k  \otimes \delta)$$ the
$G$-invariant subspace. 

Suppose that $G$ acts freely on the zero-set $P := \mu^{-1}
(0)$. Then $0$ is a regular value of the moment, $P$ is a coisotropic
submanifold of $M$ and its characteristic distribution is the tangent
space to the orbits. So the quotient $$M_r
:= P / G$$ is a symplectic manifold. Consider the quotient $L_r$ of
the restriction of $L $ to $P$ by the $G$-action. Since the action preserves the
connection of $L$ and is by parallel transport over $P$, $L_r$ inherits a connection. Its curvature is
$\frac{1}{i} \om_r$ where $\om_r$ is the reduced symplectic form. 

To define the complex structure on the symplectic quotient, introduce
the complexification $G^{\C}$ of $G$. It is a complex
connected Lie group containing $G$ as a maximal compact subgroup. The
Lie algebra of $G^{\C}$  is the complexification of $ \Lie$ and the
Cartan decomposition is the diffeomorphism $$G^\C \simeq \exp ( i
\Lie ) G .$$ 
Furthermore the set $\exp ( i
\Lie )$ is diffeomorphic to the vector space $ \Lie$, the
diffeomorphism being the exponential map. 

The $G$-action extends to a holomorphic action of  $G^\C$
whose infinitesimal action is given by  
$$ \xi^\# + J \eta^\# ,\qquad  \forall \xi + i \eta \in \Lie \oplus i \Lie =
\Lie\otimes \C .$$
The saturated set $M_s := G^\C. P$ of the zero set of $\mu$ is called the stable set.  
Since the vector field $ J \xi^\#$ is the Riemannian gradient of 
$\mu^\xi$ for any vector $\xi$, $M_s$ is an open set diffeomorphic to $  \Lie \times P$,
the diffeomorphism being 
\begin{gather} \label{eq:1} 
  \Lie \times P \rightarrow M_s, \quad (\xi , x ) \rightarrow \exp ( i
\xi ) .x
\end{gather}
Furthermore the action of $G$ on $P$ being free, the action of $G^\C$
on $M_s$ is also free. Finally the injection of $P$ into $M$ induces a
diffeomorphism 
$$P / G \simeq M_s / G^\C.$$ In this way the symplectic quotient
inherits a complex structure. It is compatible with the symplectic
form and $M_r$ becomes a K{\"a}hler manifold. 

Similarly the $G$-action on the prequantization bundle and the
half-form bundle can be analytically continued to holomorphic actions
of the complexified group $G^\C$.  We have a natural identification
between $L_r$ and the quotient by $G^\C$ of the restriction of $L$ to
the stable set. Hence $L_r$ inherits a holomorphic structure, it is compatible
with the connection. We define $\delta_r$ as the quotient by $G^\C$ of
the restriction of $\delta$ to $M_s$. This is a holomorphic line bundle
on $M_r$.
The holomorphic $G$-invariant sections of $L^k \otimes \delta$ are also
invariant under the complexified action. This defines a natural map 
\begin{gather} \label{eq:GS_def}
 V_k : H_G^0 (M, L^k \otimes \delta ) \rightarrow H^0 (M_r, L_r^k
\otimes \delta_r) .
\end{gather}

\begin{theo}[Guillemin-Sternberg] \label{sec:GS_isomorphism} When $k$
  is sufficiently large, $V_k$ is an isomorphism.
\end{theo}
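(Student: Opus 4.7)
The plan is to prove injectivity directly by analytic continuation and surjectivity by extending sections from $M_s$ across the unstable locus, the positivity of $L^k$ for large $k$ being what ensures that the extension exists.

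For injectivity, I would argue as follows. Suppose $s \in H_G^0(M, L^k \otimes \delta)$ satisfies $V_k(s) = 0$. Since the $G$-action on $L^k \otimes \delta$ preserves the holomorphic and Hermitian structures, it extends to a holomorphic action of $G^{\C}$ on the total space, and the $G$-invariance of $s$ automatically upgrades to $G^{\C}$-invariance: the stabilizer of $s$ is a complex-analytic subgroup of $G^{\C}$ containing $G$, hence equal to $G^{\C}$. Through the diffeomorphism (\ref{eq:1}), the restriction $s|_{M_s}$ is then determined by $s|_P$. Since $V_k(s) = 0$ forces $s|_P = 0$, one concludes $s \equiv 0$ on the non-empty open set $M_s$, and then $s = 0$ globally by analytic continuation on the connected complex manifold $M$.

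For surjectivity, given $\sigma \in H^0(M_r, L_r^k \otimes \delta_r)$, I would pull it back along the quotient map $M_s \to M_r$ to obtain a $G^{\C}$-invariant holomorphic section $\tilde\sigma$ of $L^k \otimes \delta$ on $M_s$. This $\tilde\sigma$ is manifestly $G$-invariant and, by construction, recovers $\sigma$ under $V_k$, so the only task is to extend $\tilde\sigma$ holomorphically across the unstable locus $M \setminus M_s$ in order to produce an element of $H_G^0(M, L^k \otimes \delta)$.

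The extension is the main obstacle, and the hypothesis that $k$ be large enters precisely here. The route I have in mind is Kirwan's Morse-theoretic stratification $M \setminus M_s = \bigsqcup S_\beta$ into finitely many locally closed complex submanifolds of positive codimension, each carrying an explicit $G^{\C}$-weight decomposition of its normal bundle (associated with the gradient flow of $|\mu|^2$ with respect to the K\"ahler metric). For $k$ large, the positivity of $L^k$ dominates the negative weights that appear transverse to each stratum, so that $\tilde\sigma$ is seen to have controlled growth there and extends across $S_\beta$ by a Hartogs- or Riemann-type removable singularity argument for holomorphic sections. An equivalent algebro-geometric route is to invoke GIT directly: for $k$ sufficiently large, $L^k \otimes \delta$ is an ample $G$-linearized line bundle whose GIT quotient coincides with $M_r$ under the free-action hypothesis on $P$, and the identification $H^0(M, L^k \otimes \delta)^G \simeq H^0(M_r, L_r^k \otimes \delta_r)$ is then built into the construction of the GIT quotient. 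In either approach, the role of ``$k$ large'' is to guarantee that the line bundle's positivity overcomes the obstruction to extension along the unstable strata.
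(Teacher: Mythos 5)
Your overall scheme — injectivity via analytic continuation of the $G^{\C}$-invariance from the stable set, surjectivity via pull-back plus removable-singularity extension across $M\setminus M_s$ — is the right one and matches the paper in outline. The injectivity argument is fine, though the stabilizer-subgroup detour is heavier than needed: the direct route is that $g\mapsto g\cdot s$ is a holomorphic map from $G^{\C}$ into the (finite-dimensional) space $H^0(M,L^k\otimes\delta)$, constant on the maximal totally real submanifold $G$, hence constant, after which $s|_P=0$ and connectedness of $M$ give $s\equiv0$.

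Where you genuinely diverge from the paper is the surjectivity step, and this is worth flagging. The paper does \emph{not} invoke Kirwan's stratification or a GIT argument; it uses only two facts: (i) $M\setminus M_s$ is contained in a proper complex-analytic subvariety of $M$ (a fact it records but attributes to the Guillemin--Sternberg literature), and (ii) a direct quantitative bound on $G^{\C}$-equivariant sections along the Cartan flow $\exp(i\xi)\cdot x$, $x\in P$, namely Proposition \ref{sec:estimation}: $\|\Psi(\exp(i\xi).x)\|^2\le e^{C_1\|\xi\|-kC_2\|\xi\|^2}\|\Psi(x)\|^2$. Here $C_2$ comes from the positive curvature of $L$ and $C_1$ from $\delta$, and ``$k$ large'' serves exactly to let the $L^k$ contribution control the $\delta$ contribution — this is the same positivity mechanism you gesture at via the negative weights transverse to the Kirwan strata, but expressed as a concrete Gaussian estimate rather than a weight decomposition. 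Once the section is bounded and the complement is thin, Riemann extension finishes the proof. Your Kirwan-stratification route would also work but requires a stratum-by-stratum induction (the $S_\beta$ are locally closed, not analytic subvarieties themselves) that you leave implicit, and is considerably more machinery than the problem demands; your GIT alternative is valid since $M$ is projective by Kodaira embedding, but is again a black box where the paper has a two-line estimate that it moreover reuses later. In short: correct idea, different and heavier execution of the extension step, and the explicit estimate the paper relies on is arguably the cleaner and more reusable ingredient.
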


The condition on $k$ is due to the presence of the half-form
bundle. It does not appear in theorem \ref{theo:geom-real-space} because in this case the
half-form bundle is a power of the prequantum bundle.  

The theorem says that the $G^\C$-equivariant holomorphic sections of $L^k
\otimes \delta$ over the stable set extend uniquely into $G^\C$-equivariant
holomorphic sections over $M$. This follows from the non-trivial fact
that the complementary of the stable set is contained in a complex
submanifold of codimension $\geqslant 1$. Furthermore  the $G^\C$-equivariant sections of $L^k
\otimes \delta \rightarrow M_s$ are bounded when $k$ is sufficiently
large.  In the next section we will prove an explicit estimate that we
will use in the sequel. 

\subsection{Estimates of the equivariant sections} 

Introduce a norm $\| . \|$ on the Lie algebra $\Lie$.
\begin{prop} \label{sec:estimation}
There exists $C_1, C_2 >0$ such that for any integer $k$ and any $G^\C$-equivariant section
$\Psi$ of $L^k \otimes \delta \rightarrow M_s$, one has
$$ \| \Psi( \exp(i \xi) .x) \|^2 \leqslant e^{ C_1 \| \xi \| - k
  C_2 \| \xi \|^2} \| \Psi ( x)\|^2, \quad \forall x \in P,\; \forall \xi
\in \Lie  $$
where we denote by $\| \cdot \|$ the punctual norm of $L ^k \otimes
\delta$. 
\end{prop}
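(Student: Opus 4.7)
The approach is to work along the single complex orbit direction: set $y_t = \exp(it\xi).x$ and study the real-valued function $\phi : t \mapsto \log\|\Psi(y_t)\|^2$ for $t \in [0,1]$. The bound will follow from a Taylor expansion of $\phi$ at $t=0$ and evaluation at $t=1$; since $\mu^\xi(x)=0$ on $P$, the first-order term supplies the linear factor $C_1\|\xi\|$, while the second-order term furnishes the quadratic factor $-kC_2\|\xi\|^2$.

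First I would derive the identity
$$ \phi'(t) = 2\bigl(k\mu^\xi(y_t) + \beta^\xi(y_t)\bigr), $$
where $\beta^\xi$ is a smooth real function on $M$, linear in $\xi$, arising from the lift of the $G$-action to the half-form bundle $\delta$. The derivation rests on three ingredients: the Chern-connection identity $d\|\Psi\|^2(X) = 2\mathrm{Re}(\nabla_X\Psi,\Psi)$ applied to $X = J\xi^\#$, which is the tangent vector field to the curve $y_t$; the holomorphy of $\Psi$, which via the $(1,0)$-decomposition yields $\nabla_{J\xi^\#}\Psi = i\nabla_{\xi^\#}\Psi$; and the infinitesimal equivariance on $L^k\otimes\delta$, which takes the form $\nabla_{\xi^\#}\Psi + i(k\mu^\xi + \beta^\xi)\Psi = 0$. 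In particular $\phi'(0) = 2\beta^\xi(x)$, and by compactness of $M$ one has $|\phi'(0)| \leq C_1\|\xi\|$ with $C_1$ independent of $k$.

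A second differentiation, using $d\mu^\xi(J\xi^\#) = -\omega(\xi^\#, J\xi^\#) = -\|\xi^\#\|_g^2$ for the K\"ahler metric $g$, gives
$$ \phi''(t) = -2k\|\xi^\#(y_t)\|_g^2 + 2\,d\beta^\xi(J\xi^\#)(y_t), $$
the last term being bounded by $C\|\xi\|^2$ uniformly on $M$. The crucial positive input is the uniform lower bound $\|\xi^\#(x)\|_g \geq c_0\|\xi\|$ on $P$, which follows from the freeness of the $G$-action on $P$ (making $\xi \mapsto \xi^\#(x)$ injective at each $x \in P$), homogeneity in $\xi$, and compactness of $P$. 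Propagating this estimate to the trajectory $\{y_t\}_{t \in [0,1]}$ by continuity, one obtains $\phi''(t) \leq -2kc_1\|\xi\|^2 + C\|\xi\|^2$ along the curve.

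The conclusion then follows from Taylor's formula with integral remainder:
$$ \phi(1) - \phi(0) = \phi'(0) + \int_0^1 (1-s)\,\phi''(s)\,ds \leq C_1\|\xi\| - kC_2\|\xi\|^2, $$
for a suitable $C_2 > 0$ absorbing the two competing contributions at order $\|\xi\|^2$. The delicate point, and the main obstacle, is the uniform propagation of $\|\xi^\#(y_t)\|_g \geq c\|\xi\|$ --- uniform in $x \in P$, $\xi \in \Lie$, and $t \in [0,1]$: for bounded $\|\xi\|$ this is immediate from compactness, but for arbitrarily large $\|\xi\|$ the trajectory can travel deep into $M_s$ and approach the unstable locus $M \setminus M_s$, where the action degenerates. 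Handling this uniformity is presumably achieved by exploiting the diffeomorphism $M_s \simeq \Lie \times P$ from~(\ref{eq:1}) to reduce everything to the compact slice $P$, together with the convexity structure of $\mu^\xi$ along the $J\xi^\#$-flow.
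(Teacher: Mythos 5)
Your approach is essentially that of the paper. Both arguments differentiate $\log\|\Psi\|^2$ along the complexified orbit $t\mapsto\exp(it\xi).x$, separate the contributions of $L^k$ and of $\delta$, and exploit $\mu^\xi|_P=0$ for the linear term and the decrease of $\mu^\xi$ along the gradient flow of $J\xi^\#$ for the quadratic term. Your formulation via $\phi''$ and Taylor's formula with integral remainder is a direct reformulation of the paper's two nested integrations, first of $\tfrac{d}{dt}\mu^\xi(y_t)=-g(\xi^\#,\xi^\#)(y_t)$ and then of $\tfrac{d}{dt}\log\|\Psi(y_t)\|^2 = 2\bigl(k\mu^\xi(y_t)+\beta^\xi(y_t)\bigr)$.

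The obstacle you flag at the end is the crux, and you do not resolve it; the paper sidesteps it rather than closing it. The paper sets $C_2 := \inf\{ g(\xi^\#,\xi^\#)(y) : \|\xi\|=1,\ y\in M\}$, the infimum over \emph{all} of $M$, after which $\tfrac{d}{dt}\mu^\xi(y_t)\leq -C_2\|\xi\|^2$ holds for every $t$ with no propagation needed --- that single move is what your proposal lacks. You should, however, notice that this shortcut is itself suspect: for any fixed $\xi\neq 0$ the function $\mu^\xi$ attains its maximum on the compact $M$, and at that point $d\mu^\xi=0$, hence $\xi^\#=0$; so the infimum over all of $M$ is in fact $0$ whenever $\dim G>0$. (In the polygon-space case, $SU(2)$ has positive-dimensional isotropy at collinear configurations of $S^2_{\ell_1}\times\cdots\times S^2_{\ell_n}$.) Consistently with this, one can check on $M=\mathbb P^1$ with the standard $U(1)$-action that $\log\|\Psi(\exp(i\xi).x)\|^2-\log\|\Psi(x)\|^2 \sim -Ck\|\xi\|$ for $\|\xi\|$ large, i.e. the exponent is linear, not quadratic, once the flow leaves a tube around $P$. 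So the estimate with some $C_2>0$ can only hold for $\|\xi\|$ in a bounded set, where the whole trajectory remains in a tubular neighbourhood of $P$ on which $g(\xi^\#,\xi^\#)\geq c\|\xi\|^2$ by freeness of $G$ on $P$ and compactness --- there both your propagation and the paper's infimum are harmless, and that local form is all that is used later (Schwartz kernel $O(k^{-\infty})$ off $\La$). Your instinct that the large-$\|\xi\|$ regime is problematic is therefore correct, and the paper's proof glosses over the same point.
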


\begin{proof} We start by estimating $\| \exp ( i\xi). u\|$
  in terms of $\| u\|$ for any $u \in \delta$.  
Introduce the smooth function $r$ on $\Lie \times M$
$$ r(\xi, x) := \frac{d}{dt} \Bigr|_{t=0} \frac{ \| \exp (i t \xi).u
  \| ^2}{\| u \| ^2} , \qquad \text{ where } u \in \delta_x $$
We have 
$$ \frac{d}{dt} \| \exp (i t \xi).u \|^2 =  r(\xi, \exp( it \xi ). x)
\| \exp ( it \xi) .u \| ^2 $$
Let $C_1$ be the supremum of $ | r |$ on the compact set $ \{ \| \xi
\| =1 \} \times M$. Then by integrating the previous equality we get for $\| \xi \| =1 $ that 
$$ \| \exp (i t \xi) .u\|^2 \leqslant e^{C_1 | t| } \| u \|^2 .$$
Let us now estimate $\| \exp ( i\xi). u\|$
  in terms of $\| u\|$ for any $u \in L$. First we have 
$$  \frac{d}{dt} \Bigr|_{t=0} \frac{ \| \exp (i t \xi).u
  \| ^2}{\| u \| ^2} =  2 \mu^\xi(x)  , \qquad \text{ if } u \in L_x $$
This follows from the fact that the infinitesimal action of $i \xi \in \Lie
\otimes \C$ on the sections of $L$ is $\nabla_{J \xi^\#} -
\mu^{\xi}$. Furthermore since $J \xi ^ \#$ is the gradient of
$\mu^\xi$, we have  
$$ \frac{d}{dt} \mu^\xi (\exp (i t \xi).x ) =  - g ( \xi^\#,
\xi^\# ) (\exp (i t \xi).x)$$
where $g$  is the Riemannian metric $g(X, Y) = \om (X, JY)$ of $M$. 
Let $C_2$ be the infimum of $g(\xi ^\#, \xi ^\# ) (x) $ on the compact $\{ \| \xi
\| =1 \} \times M$. By integrating, we obtain for  $\| \xi \| =1$ 
$$  \mu^\xi (\exp (i t \xi).x ) \leqslant \mu^\xi (x) - C_2 t$$  
Integrating again we obtain for $x\in P$ (and consequently $\mu^\xi (x)
=0$) that  
$$ \| \exp (i t \xi) .u\|^2 \leqslant e^{-C_2  t^2 } \| u \|^2 ,
\qquad \forall \; u \in L_x$$
This ends the proof.\end{proof}

\subsection{The reduced half-form bundle} 
In this part we define the isomorphism $\varphi_r : \delta^2_r
\rightarrow \dcan T^*M_r$ which makes $\delta_r$ a half-form
bundle. Recall that the isomorphism $\delta \rightarrow \dcan T^*M$
was denoted by $\varphi$. 
 Let $\pi_s$ be the projection from the stable set $M_s$ onto
the quotient $M_r = M_s /
G^\C$. Introduce the bundle over $M_s$ 
$$ E =  (\ker (\pi_s)_\star \otimes \C ) \cap T^{1,0} M_s .$$
One has an exact sequence 
$$ 0 \rightarrow  E
\rightarrow T^{1,0} M_s \rightarrow \pi_s ^* T^{1,0} M_r \rightarrow
0, 
$$
Consider an invariant metric $( ., . )$ on the Lie algebra $\Lie$ and an orthonormal
base $(\xi_i)$ of $\Lie$. Define the section of $\dvol E$
\begin{gather} \label{eq:7} 
 \gamma =   \frac{1}{2^\ell}( \xi_1^\# - i J \xi_1 ^\# ) \wedge
\ldots \wedge ( \xi_\ell^\# - i J \xi_\ell ^\# ) 
\end{gather}
This section $\ga$ does not depend on the choice of the base $(
\xi_i)$. It does not vanish, is holomorphic and
$G^\C$-equivariant. The contraction by $\ga$ defines an
$G^\C$-equivariant isomorphism
$$   \dcan T^* M_s \rightarrow \pi_s ^* \dcan T^* M_r 
$$
By composing  with $\varphi$, we obtain an isomorphism from
$\delta^{\otimes 2}$ to $\pi_s ^* \dcan T^* M_r$ which descends into
$$ \varphi_r : \delta_r ^{\otimes  2} \rightarrow \dcan T^* M_r $$
In other words, for any $u \in  \delta$, one has 
\begin{gather} \label{eq:8} 
 \iota (\gamma) \varphi (u ^2 ) = \pi_s^* \varphi_r ( [ u ]^2 ). 
\end{gather}
Observe that $\varphi_r$ is a holomorphic map. 

We endow $\delta_r$ with the
metric such that $\varphi_r$ become an isomorphism of
Hermitian bundle. We have to be careful that the projection from
$\delta |_{M_s} $ onto $\delta_r$ does not preserve the metrics, even when it is
restricted to the zero set of the moment map. For any  $u, v \in
\delta_x$ with $x$ in the zero level set,
\begin{gather} \label{eq:6}
   (u,v )_{ \delta_x}  = \| \gamma (x) \|^{-1}  \; (  [u], [v])_{\delta_{r,x} }  
\end{gather}
And a straightforward computation gives the punctual norm of $\ga$  
$$ \|  \ga (x)  \| = 2^{-\ell/2}  \operatorname{det} ^{\frac{1}{2}}\bigl[ g( \xi_i^\# , \xi_j ^\# )
\bigr]_{i,j=1, \ldots , \ell} (x) $$
where $ g$ is the metric $\om (X, JY)$. From now on, we assume that
the invariant metric of $\Lie$ is chosen so that the Riemannian volume
of $G$ is 1. This implies that the Guillemin-Sternberg
isomorphism rescaled by a factor $ ( 2\pi/k)
^{\ell /4  }$ is
asymptotically unitary as was proved in \cite{HaKi}. This follows also
from the next results, cf. the remark after theorem \ref{sec:compF}.

\subsection{A class of Fourier integral operators} \label{sec:four-integr}

We denote by $\pi$ the projection from the zero set $P$ of the moment map 
onto the quotient $M_r= P /G$. Let
us introduce some datas associated to the symplectic reduction $M
\varparallel G$. First consider the submanifold  
$$ \La := \{ (x, \pi (x) ), x\in P \} \subset M \times M_r
$$ 
Denote by $M_r^-$ the manifold $M_r$ endowed with the symplectic form
$- \om_r$. Then $\La$ is a Lagrangian submanifold of $M \times
M_r^-$ called the {\em moment Lagrangian}. Next one defines a section $t$ of $L \boxtimes
\overline{L}_r$ over $\La$ by   
 $$ t ( x, \pi (x) ) = u \otimes [\overline{u}] $$ 
if $x \in P$ and  $u \in L_x $ is a unitary vector. This section is
flat and unitary. In a similar way
consider the section $t_\delta$ of $\delta  \boxtimes
\bar{\delta}_r$ defined on $\La $ by  
\begin{gather} \label{eq:16}
 t_{\delta} (x,\pi(x)) = u \otimes [\overline{u}]
\end{gather}
 if $x \in P$, $ u \in \delta _{x}$ and $[u] \in \delta_{r,x}$ is a
unitary vector.

Next we define a space $\Fourier$ of Fourier integral operators
associated to $( \La, t, t_\delta)$. First let us introduce the
Schwartz kernel of an operator $ \Hilbert_{r, k} \rightarrow
\Hilbert_k$ where 
$$ \Hilbert _k := H^{0} ( M, L^k \otimes \delta) , \qquad \Hilbert _{r,k} := H^{0} ( M_r, L_r^k \otimes \delta_r).$$
  The scalar product of $\Hilbert _{r,k}
$ gives us an isomorphism 
$$ \End (\Hilbert _{r,k} , \Hilbert _k  ) \simeq  \Hilbert _k 
\otimes \overline{\Hilbert}_{r,k} .$$
The latter space can be regarded as the space of holomorphic sections of  
$$ (L^k \otimes \delta ) \boxtimes (\bar{L}^k_r \otimes \overline{\delta}_r)
\rightarrow M\times \overline{M}_r,$$
where $\overline{M}_r$ is the manifold $M_r$ endowed with the conjugate
complex structure. The section associated in this way to an operator is its
Schwartz kernel. 

 Consider a sequence
$(Q_k)$ such that for every $k$, $Q_k$ is an operator $\Hilbert_{r,k}
 \rightarrow \Hilbert _{k} $.
We
say that $(Q_k)$
is a Fourier integral operator of $\Fourier $ if the sequence of
Schwartz kernel satisfies
\begin{gather} \label{noyau_FIO}
 Q_k(x,y) =  \Bigl( \frac{k}{2\pi} \Bigr)^{n_r+ \frac{\ell}{4}} F^k(x,y) f(x,y,k) + O
(k^{-\infty}) \end{gather}
where 
\begin{itemize}
\item 
$F$ is a section of  $L \boxtimes \bar{L}_r
\rightarrow M\times \bar{M}_r$ such that  $\| F(x,z)
\| <1 $ if $(x,z) \notin \La  $, 
$$ F (x,z) = t (x,z), \quad \forall (x,z) \in \La $$
and $ \bar{\partial} F \equiv 0 $
modulo a section vanishing to any order along $\La$.
\item
  $f(.,k)$ is a sequence of sections of  $ \delta  \boxtimes  \bar{\delta}_r
\rightarrow M\times \bar{M}_r$ which admits an asymptotic expansion in the $\Ci$
  topology of the form 
$$ f(.,k) = f_0 + k^{-1} f_1 + k^{-2} f_2 + ...$$
whose coefficients satisfy $\bar{\partial} f_i \equiv 0  $
modulo a section vanishing to any order along $\La$.
\end{itemize}
Furthermore $n_r$ is the complex dimension of $M_r$ and $\ell$ is the
dimension of $G$. 

Let us define the principal symbol of $(Q_k)$ to be the function $g
\in \Ci (P)$ such that the restriction to $\La$ of the leading term $f_0$ of the previous asymptotic
expansion is 
$$  f_0 (x,z) = g (x) t_{ \delta} (x,z) , \qquad \forall (x,z) \in
\La $$
Denote by $\sigma : \Fourier  \rightarrow \Ci(P)$ the principal symbol
map. Let $\Fourier^1$ be the space of Fourier integral operator of
order 1, that is $(T_k) \in \Fourier^1 $ if and only if $(k
T_k) \in \Fourier$.  
The next result  was proved in \cite{oim_qm}.
\begin{theo} 
The sequence 
$ 0 \rightarrow  \Fourier^1 \rightarrow 
\Fourier  \xrightarrow{\sigma} \Ci(P) \rightarrow 0 $ 
is exact. 
\end{theo}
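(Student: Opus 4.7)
The plan is to check exactness at each of the three nontrivial positions of the sequence. The containment $\Fourier^1 \subset \ker \sigma$ is immediate from the definitions: if $(T_k) \in \Fourier^1$ then by definition $(kT_k) \in \Fourier$, so the Schwartz kernel of $T_k$ has an asymptotic expansion in which the leading coefficient $f_0$ appearing in (\ref{noyau_FIO}) vanishes identically, hence a fortiori its restriction to $\La$ vanishes and $\sigma(T_k) = 0$.

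For surjectivity of $\sigma$, given $g \in \Ci(P)$ I would construct an element of $\Fourier$ with principal symbol $g$ by producing its Schwartz kernel explicitly. First extend $g$ to a smooth function $\tilde g$ on a tubular neighborhood $U$ of $\La$ in $M \times \overline{M}_r$, using the diffeomorphism $P \simeq \La$ supplied by the first projection. Next, build sections $F$ of $L \boxtimes \bar{L}_r$ and $f_0$ of $\delta \boxtimes \bar{\delta}_r$ over $U$ extending $t$ and $\tilde g \cdot t_\delta$ respectively, chosen almost-holomorphic along $\La$: since $\La$ is totally real in $M \times \overline{M}_r$ (the second factor carries the conjugate complex structure), a standard Borel-type construction supplies extensions such that $\overline{\partial} F$ and $\overline{\partial} f_0$ vanish to infinite order on $\La$ and such that $|F| < 1$ off $\La$. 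Multiplying by a cutoff supported in $U$ and applying the Bergman projectors $\Pi_k$ on each factor yields a genuine holomorphic kernel defining the candidate operator $Q_k$. The known asymptotics of the Bergman projectors combined with stationary phase along the normal directions of $\La$ identify $(Q_k)$ with an element of $\Fourier$ whose symbol is $g$; the normalization exponent $n_r + \ell/4$ decomposes as the complex dimension $n_r$ of $M_r$ plus a Gaussian contribution $\ell/4$ coming from the $\ell$ normal directions transverse to the almost-complex structure on $\La$.

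The main obstacle is the reverse inclusion $\ker \sigma \subset \Fourier^1$. Assume $(Q_k) \in \Fourier$ with $f_0|_\La = 0$; one must produce new almost-holomorphic amplitudes $(\tilde f_i)$ such that, modulo $O(k^{-\infty})$, the kernel of $Q_k$ may be rewritten as $(k/2\pi)^{n_r + \ell/4} F^k \cdot k^{-1}(\tilde f_0 + k^{-1}\tilde f_1 + \ldots)$. The key technical input is a gain-of-order lemma: if $h$ is smooth on a neighborhood of $\La$, almost-holomorphic along $\La$, and $h|_\La = 0$, then the kernel $(k/2\pi)^{n_r + \ell/4} F^k h$ defines an operator equal modulo $O(k^{-\infty})$ to one of the form $(k/2\pi)^{n_r + \ell/4} F^k \cdot k^{-1} \tilde h$ with $\tilde h$ almost-holomorphic. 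Its proof writes $h$ locally as a sum of products of defining functions of $\La$ with smooth coefficients and uses the almost-holomorphicity of $F^k$ together with integration by parts in the normal directions to convert each defining function into a $k^{-1}$. Iterating this on the successive coefficients $f_0, f_1, \ldots$ produces the desired rewriting and shows $(Q_k) \in \Fourier^1$. The technical heart of this argument is carried out in \cite{oim_qm}, and the delicate point is to control uniformly all normal derivatives of the amplitudes while keeping them almost-holomorphic at each step of the iteration.
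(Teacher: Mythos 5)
The paper does not actually prove this theorem in the present text: immediately before the statement it says ``The next result was proved in \cite{oim_qm},'' and the section moves directly to theorems \ref{sec:isoGS}--\ref{sec:sous-princ}. So there is no proof here to compare yours against line by line; I can only judge your blind attempt against the general methodology of the Charles papers on Toeplitz operators and Lagrangian sections (\cite{oim_bt}, \cite{oim_qm}, \cite{oim_hf}), and against the internal consistency of the definitions given in section \ref{sec:four-integr}.

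Your overall outline is the expected one and is correct in its broad strokes: $\Fourier^1 \subset \ker\sigma$ is immediate by absorbing the factor $k^{-1}$ into the amplitude expansion; surjectivity of $\sigma$ is obtained by extending a given $g \in \Ci(P)$ almost-holomorphically across $\La$ (which is indeed totally real in $M \times \overline{M}_r$, being Lagrangian in a K\"ahler manifold), building an almost-holomorphic kernel, and projecting with the Bergman projectors, with stationary phase giving both the membership in $\Fourier$ and the normalization $n_r + \ell/4$; and the reverse inclusion $\ker\sigma \subset \Fourier^1$ is the substantive point and rests on a gain-of-order lemma. Your description of that lemma is the right mechanism: since $\nabla F = \tfrac{1}{i}\al \otimes F$ with $\al$ a $1$-form vanishing along $\La$ (the analogue of (\ref{eq:derF})), any almost-analytic defining function $\rho$ of $\La$ can be written as $\langle \al, Y\rangle$ for a smooth vector field $Y$, whence $\rho F^k = \tfrac{i}{k}\nabla_Y F^k$ and integration by parts produces the $k^{-1}$; the genuinely delicate point, which you correctly flag, is to re-arrange the resulting amplitude so that it is again almost-holomorphic along $\La$ (this is done by subtracting a flat correction, which costs only $O(k^{-\infty})$), and to iterate this consistently through the whole expansion.

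Two small imprecisions, neither fatal. First, in your gain-of-order lemma the phrase ``the kernel $(k/2\pi)^{n_r+\ell/4}F^k h$ defines an operator'' is a slight abuse: that expression is only almost-holomorphic, whereas the Schwartz kernels of elements of $\Fourier$ are required to be holomorphic; the correct formulation is that if a \emph{holomorphic} kernel $Q_k$ satisfies $Q_k = (k/2\pi)^{n_r+\ell/4}F^k h + O(k^{-\infty})$ with $h|_\La = 0$, then it also admits a representation with an extra $k^{-1}$ in front. Second, for surjectivity you should note explicitly (or cite) that the Bergman projection of an almost-holomorphic kernel of this Lagrangian-section type stays in the class $\Fourier$ with unchanged principal symbol; this is true and is part of the content of \cite{oim_qm} and \cite{oim_bt}, but it is a genuine fact, not a formality. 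With these caveats your argument is a faithful reconstruction of what must be in the cited reference.
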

\subsection{Semiclassical properties of the reduction} 

Introduce the inverse  of the
Guillemin-Sternberg isomorphism (\ref{eq:GS_def})  
$$ W_k : H^0(M_r, L^k_r \otimes \delta_r) \rightarrow H^0_G(M, L^k
\otimes \delta ) .$$ 
Recall that we denote by $\ell$ the dimension of $G$.
\begin{theo} \label{sec:isoGS}
The sequence $\bigl ( \bigl( \frac{k}{2\pi} \bigr) ^{\frac{\ell}{4}}
W_k \bigr) $ is an Fourier integral operator of $\Fourier$ whose principal symbol is
the constant function equal to 1. 
\end{theo}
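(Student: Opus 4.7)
The plan is to identify $W_k$ by computing its Schwartz kernel on reproducing kernels of $M_r$. Given $y \in M_r$, set $\psi_y(\cdot) = \Pi^{M_r}_k(\cdot, y)$; then $W_k(x, y) = (W_k \psi_y)(x)$ is characterised, by theorem \ref{sec:GS_isomorphism}, as the unique $G$-invariant holomorphic section of $L^k \otimes \delta$ which, after push-down to $M_r$ via the identifications induced by $\ga$ in formulas (\ref{eq:8}) and (\ref{eq:6}), coincides with $\psi_y$. The Bergman kernel of $M_r$ is known, via \cite{BoGu}, to admit a Fourier integral operator expansion
\[ \Pi^{M_r}_k(z, y) = (k/2\pi)^{n_r} E^k(z,y)\, e(z,y,k) + O(k^{-\infty}), \]
with $E$ a section of $L_r \boxtimes \bar{L}_r$ equal to the natural flat unitary section on the diagonal, $|E|<1$ off it, $\bar{\partial} E$ vanishing to infinite order along the diagonal, and $e(\cdot, k) \sim \sum k^{-j} e_j$ with $e_0$ restricting to the corresponding half-form section on the diagonal. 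So the strategy is to realise $W_k$ via a $G$-invariant Bergman projection applied to a suitable lift from $M_r$.

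Concretely, I would build a candidate operator $R_k$ as follows: choose a local smooth section $s : U \to P$ of $\pi$, extend $G^{\C}$-equivariantly into a neighborhood of $P$ in $M$ using the diffeomorphism (\ref{eq:1}), and use $\ga$ together with the isomorphisms (\ref{eq:8}) and (\ref{eq:6}) to lift sections of $L_r^k \otimes \delta_r$ to (non-holomorphic) sections of $L^k \otimes \delta$; cut off smoothly, then apply the $G$-averaged Bergman projector $\Pi_k^G := \int_G g^* \Pi_k \, dg$. Each building block has a clean symbolic calculus: the Bergman projector of $M$ is a Fourier integral operator localised on the diagonal, the group averaging is a Fourier integral operator associated to the moment Lagrangian via stationary phase on $G$, and the lift is a multiplication operator by a $\ga$-dependent half-form factor. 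Composing these produces a kernel of the form (\ref{noyau_FIO}), with phase $F$ the $G^{\C}$-equivariant holomorphic extension of $t$ from $\La$. The decay $|F(x,y)|<1$ off $\La$ combines the standard Bergman decay off the diagonal of $M$ with the Gaussian decay $e^{-k C_2 \|\xi\|^2}$ along $G^{\C}$-orbits from proposition \ref{sec:estimation}.

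The main obstacle lies in matching the amplitudes precisely to obtain the prescribed power $(k/2\pi)^{n_r+\ell/4}$ and principal symbol $1$. The Bergman projector of $M$ contributes $(k/2\pi)^n = (k/2\pi)^{n_r+\ell}$; the stationary phase in the $G$-averaging, whose Hessian is the orbit metric $g(\xi_i^\#, \xi_j^\#)$, produces a factor $(2\pi/k)^{\ell/2}\, \det^{-1/2}[g(\xi_i^\#, \xi_j^\#)]^{1/2}$. Combining this with the identity $\|\ga\| = 2^{-\ell/2} \det^{1/2}[g(\xi_i^\#, \xi_j^\#)]^{1/2}$ and the square root built into the half-form bundle yields the target power $(k/2\pi)^{n_r + \ell/4}$, while the amplitude restricted to $\La$ becomes exactly $t_\delta$ once the metric normalisation (\ref{eq:6}) is used; the convention that $G$ has Riemannian volume $1$ is essential here. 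Uniqueness of the Guillemin--Sternberg extension (theorem \ref{sec:GS_isomorphism}) then identifies $R_k$ with $W_k$ modulo $O(k^{-\infty})$, so $\bigl(\frac{k}{2\pi}\bigr)^{\ell/4} W_k$ lies in $\Fourier$ with principal symbol $1$. The delicate bookkeeping of these Gaussian and $\ga$-factors, and the taking of consistent square roots in the half-form bundle, is the technical heart of the argument.
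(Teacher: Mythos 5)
Your strategy is genuinely different from the paper's, and it has a gap at the step where you identify your candidate operator $R_k$ with $W_k$.

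The paper's proof avoids any candidate construction: it uses the exact formula for the Schwartz kernel of $W_k T_k$. By the very definition of the Guillemin--Sternberg map, $W_k\psi$ is the $G^\C$-equivariant holomorphic extension of (the lift of) $\psi$, so under the identification $\pi_s^*(L^k_r\otimes\delta_r)\cong(L^k\otimes\delta)|_{M_s}$ one has $(W_k\psi)(x)=\psi(\pi_s(x))$ for $x\in M_s$. Hence the Schwartz kernel of $W_kT_k$ over $M_s\times M_r$ is \emph{exactly} $(\pi_s\boxtimes\id)^*$ of the Schwartz kernel of $T_k$ (for the theorem, $T_k=\id$ and one pulls back the Bergman kernel of $M_r$). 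The FIO structure (\ref{noyau_FIO}) then follows from the Toeplitz/Bergman kernel structure (\ref{noyau_toep}), the compatibility (\ref{eq:2'}) between $E_r$ and $F$, and the Gaussian decay of Proposition \ref{sec:estimation}, which controls the kernel off $\La$ together with the density argument $M\setminus M_s$ has positive codimension. No stationary phase on $G$ enters at this stage; that only appears in the proof of Theorem \ref{sec:compF}.

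Your route instead builds $R_k=\Pi_k^G\circ(\text{lift})$ and asserts that ``uniqueness of the Guillemin--Sternberg extension'' identifies $R_k$ with $W_k$ modulo $O(k^{-\infty})$. That inference does not hold as stated. Theorem \ref{sec:GS_isomorphism} characterises $W_k\psi$ as the unique $G$-invariant holomorphic section whose restriction to $P$ descends to $\psi$. Your $R_k\psi$ is indeed $G$-invariant and holomorphic, but $\Pi_k^G$ alters the lift: $(\Pi_k^G\widetilde\psi)|_P$ does not descend to $\psi$ on the nose, only approximately. Uniqueness is a rigid, exact statement and cannot by itself produce an $O(k^{-\infty})$ identification. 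To close the gap you would need, say, to compute $V_kR_k$ (a Toeplitz-type calculation in the spirit of Theorem \ref{sec:compF}) and show $V_kR_k=\id+O(k^{-\infty})$, then combine this with an \emph{a priori} polynomial bound on $\|W_k\|$ to conclude $R_k=W_k+O(k^{-\infty})$. This is doable but substantially more work, and the bound on $\|W_k\|$ is itself most naturally obtained as a consequence of the very theorem you are trying to prove (or cited from \cite{HaKi}). The symbolic bookkeeping in your third paragraph (the $(2\pi/k)^{\ell/2}\det^{-1/2}$ from the $G$-stationary phase, the $\|\ga\|$-factor, the volume normalisation of $G$) is consistent and would be the right accounting for $R_k$; but it describes $R_k$, not $W_k$, until the identification is actually established.
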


We defined Toeplitz operator in section
\ref{sec:toeplitz-operators} with their principal and subprincipal
symbols. In the following theorems we consider Toeplitz operators on
$M$ 
and the reduced space $M_r$.
\begin{theo} \label{sec:compF}
Let $(Q_k)$ and $(R_k)$ be Fourier integral of $ \Fourier$ with symbol
$f_Q$ and $f_R$ respectively. Then $(Q_k^* R_k)$ is a Toeplitz
operator of $M_r$. Its principal symbol is the function $g \in
\Ci (M_r)$ given by 
$$g (z) = \int _{P_z} \overline{f_Q (x)} f_R (x) \; \mu_{P_z} (x) , \qquad
\forall z \in M_r$$
where $\mu_{P_z} $ is the $G$-invariant measure of $P_z=\pi^{-1}(z)$
normalized by $\int _{P_z} \mu_{P_z}=1 $. 
\end{theo}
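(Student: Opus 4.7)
The plan is to write out the Schwartz kernel of $Q_k^* R_k$ as an oscillatory integral over $M$, apply a stationary-phase/Laplace expansion with critical manifold $P_z$, and then identify the half-form factors so as to get an integral against $\mu_{P_z}$.

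Concretely, I would first observe that the Schwartz kernel of $Q_k^* R_k$ at $(z_1, z_2) \in M_r \times \overline{M}_r$ is $\int_M \overline{Q_k(x, z_1)} R_k(x, z_2) \mu_M(x)$. Inserting the expansion (\ref{noyau_FIO}) gives a coefficient $(k/2\pi)^{2 n_r + \ell/2}$ times an amplitude $\overline{F_Q(x, z_1)}^k F_R(x, z_2)^k$ in $L$, multiplied by a $\delta$-valued symbol with an asymptotic expansion in $k^{-1}$. The section $(x, z_1, z_2) \mapsto \overline{F_Q(x, z_1)} F_R(x, z_2)$ has modulus $<1$ outside the set $\{x \in P,\; z_1 = z_2 = \pi(x)\}$, because of the modulus bound on $F_Q$ and $F_R$ on the moment Lagrangian $\La$. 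This already gives $(Q_k^* R_k)(z_1, z_2) = O(k^{-\infty})$ away from the diagonal of $M_r$.

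Next, for $z_1 = z_2 = z$, the critical manifold of the phase $\log|\overline{F_Q(x,z)} F_R(x,z)|^2$ is $P_z = \pi^{-1}(z)$, of real dimension $\ell$, and in the $(2n - \ell)$ normal directions the phase is negative definite; this follows from the transversality of $\La$ to the "anti-diagonal" direction, computable from (\ref{eq:derF}) and (\ref{eq:der2F}) applied to both $F_Q$ and $\overline{F_R}$. A Laplace expansion then produces $(2\pi/k)^{n - \ell/2}$, which combines with the prefactor to give $(k/2\pi)^{n_r}$, the correct leading order of a Bergman-type kernel on $M_r$. The amplitude at the critical set evaluates via (\ref{eq:16}) to $\overline{f_Q(x)} f_R(x)$ tensored with a unit vector of $\delta_r \boxtimes \bar{\delta}_r$ over $(z, z)$, leaving an integral over $P_z$ of $\overline{f_Q(x)} f_R(x)$ weighted by the square root of the inverse Hessian determinant. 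Using (\ref{eq:7}) and the identity $\|\gamma\|^2 = 2^{-\ell} \det[g(\xi_i^\#, \xi_j^\#)]$, together with the metric rescaling (\ref{eq:6}) between $\delta$ and $\delta_r$, this weight matches exactly the $G$-invariant measure on the fiber $P_z$.

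To convert this into a proof that $Q_k^* R_k$ is genuinely in the Toeplitz class of $M_r$ (and not merely a kernel of the right form), I would recognize that the produced kernel has the structure $(k/2\pi)^{n_r} E_r^k(z_1, z_2) p(z_1, z_2, k)$ where $E_r$ is the section cutting out the diagonal of $M_r \times \overline{M}_r$ and $p$ is a classical symbol whose restriction to the diagonal is $g(z) \, t_{\delta, r}(z, z)$; such kernels are Toeplitz operators by the characterization in \cite{BoGu} (in its Kähler form used throughout the paper). The normalization constant $\int_{P_z} \mu_{P_z} = 1$ can be pinned down by a sanity check: applying the formula to $Q_k = R_k = W_k$ (Guillemin-Sternberg of theorem \ref{sec:isoGS}, with principal symbol $1$), $Q_k^* R_k$ must have principal symbol the constant function $1$ since $W_k^* W_k$ is close to the identity of $\Hilbert_{r, k}$ once the $(k/2\pi)^{\ell/4}$ normalization is taken into account. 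The main obstacle is the half-form bookkeeping in the stationary-phase step: making the Hessian determinant, the factor $\|\gamma\|$ from (\ref{eq:7}), and the two instances of (\ref{eq:6}) combine precisely into the probability measure $\mu_{P_z}$ without spurious constants. Everything else is either modulus estimates or a standard composition argument.
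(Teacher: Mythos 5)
Your outline matches the paper's strategy in broad strokes — write out the Schwartz kernel of $Q_k^* R_k$ as an integral over $M$, decay estimates, Laplace-type asymptotics, half-form bookkeeping — but the decisive step is missing, and it is precisely the one you flag as "I would recognize that the produced kernel has the structure $(k/2\pi)^{n_r}E_r^k(z_1,z_2)p(z_1,z_2,k)$."

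Knowing that the kernel is $O(k^{-\infty})$ away from the diagonal and computing the leading term at $z_1=z_2=z$ by stationary phase over the critical manifold $P_z$ does not establish the Toeplitz structure: one also needs the correct Gaussian profile in a $k^{-1/2}$-neighborhood of the diagonal, with the phase given specifically by $E_r^k(z_1,z_2)$ and the amplitude holomorphic to infinite order along the diagonal. Your proposal asserts this form but never derives it. The paper obtains it from the factorization $(\pi_s \boxtimes \id)^* E_r = F$ (equation~(\ref{eq:2'})): writing $y = \exp(i\xi).x$ with $x \in P$, the phase decomposes as $\overline{F}^k(y,z_1)F^k(y,z_2) = e^{-k\varphi(\xi,x)}\,\overline{E}_r^k(\pi(x),z_1)E_r^k(\pi(x),z_2)$, so that after integrating the Gaussian in $\xi\in\Lie$ and then over the fiber $P_z$, one lands on the integral~(\ref{eq:4}), which is \emph{literally} the kernel of a composition of Toeplitz operators of $M_r$. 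Membership in the Toeplitz class then follows from the known calculus on $M_r$, and the symbol can be read off. Without this factorization — equivalently, without observing that the $L$-phase is constant along the $G$-orbits and $G^\C$-saturated, and that it pushes down to $E_r$ — the "recognition" step is not available, and that is the genuine gap.

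Your normalization sanity check using $Q_k = R_k = W_k$ is a nice alternative to Lemma~\ref{sec:lemme_mesure} for pinning down the overall constant, but note it is only legitimate if you invoke the unitarity result of~\cite{HaKi} as an external input, since in the paper's logic that unitarity is a \emph{consequence} of Theorems~\ref{sec:isoGS} and~\ref{sec:compF}. The determinant bookkeeping you defer ("the main obstacle") is also exactly what the measure lemma and equations~(\ref{eq:6}),~(\ref{eq:17}) resolve; deferring it is fine for an outline, but it is not a dispensable detail.
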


As a consequence of both theorems, $\bigl ( \bigl( \frac{k}{2\pi} \bigr) ^{\frac{\ell}{2}}
W_k^* W_k  \bigr) $ is a Toeplitz operator with principal symbol
1. Hence its uniform norm is equivalent to 1 as $k$ tends to
$\infty$. In other words, the Guillemin-Sternberg
isomorphism rescaled by a factor $ ( 2\pi/k)
^{\ell /4  }$ is
asymptotically unitary, as it was already proved in \cite{HaKi}.   
  Last result is about the composition of Toeplitz operators with
Fourier integral operators. 
 
\begin{theo} \label{sec:sous-princ}
Let $(Q_k)$ be a Fourier integral operator of $\Fourier$. Let $(S_k)$
and $(T_k)$ be Toeplitz operators of $M$ and $M_r$
respectively with principal symbols
$f_0$ and $g_0$. Then the sequence $(S_kQ_kT_k)$ is a
Fourier integral operator of $\Fourier$ with symbol  
$$  (j^* f_0 ) \si (Q) (\pi^* g_0) $$ 
where $j$ is the injection $ P \rightarrow M$
and $\pi$ the projection $P \rightarrow M_r$. Assume furthermore that $f_0$ is
$G$-invariant and $ j^* f_0 = \pi^* g_0$, then $$S_k Q_k -
Q_kT_k = k^{-1} R_k $$ 
where $(R_k)$ is a Fourier integral operator of $\Fourier$ with symbol 
$$ \si ( R_k) = ( j^* f_1 - \pi^* g_1 + \tfrac{1}{i} \dLie_X ) \si (Q)$$
where $f_1$ and $g_1$ are the subprincipal symbols of $(S_k)$ and
$(T_k)$ respectively and $X$ is the restriction to $P$ of the
Hamiltonian vector field of $f_0$. 
\end{theo}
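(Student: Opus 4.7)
The plan is to reduce everything to kernel computations using the explicit Schwartz kernel description of FIOs in $\Fourier$ given in (\ref{noyau_FIO}) together with the Boutet de Monvel--Sj\"ostrand description of the Bergman kernels $\Pi_k$ and $\Pi_{r,k}$. Writing the Toeplitz operators as $S_k = \Pi_k\, \op(f_0 + k^{-1}f_1 + \cdots)$ and $T_k = \Pi_{r,k}\,\op(g_0 + k^{-1} g_1 + \cdots)$, the Schwartz kernel of $S_k Q_k T_k$ is obtained by integrating the kernel of $Q_k$ against multiplication kernels composed with $\Pi_k$ on the left and $\Pi_{r,k}$ on the right. The composition rules of \cite{BoGu}, applied in the form used in section \ref{sec:four-integr} and in \cite{oim_qm}, show that this composition is again of the form (\ref{noyau_FIO}) with the same phase section $F$ (up to flat deformations off $\La$) and a new amplitude that admits an asymptotic expansion of the required shape. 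Thus $(S_k Q_k T_k) \in \Fourier$, and only the computation of the symbols remains.

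For the principal symbol, I would perform the stationary phase analysis in the variable being integrated out: the critical set is exactly $\La$, and at the critical point the leading amplitude is the product of the values of the multiplication factors at the corresponding points. Because $(x,z)\in \La$ means $x \in P$ and $z = \pi(x)$, the function $f_0$ gets evaluated on $P$ (hence pulled back by $j$) and $g_0$ on $M_r$ (hence pulled back by $\pi$), giving the announced principal symbol $(j^* f_0)\,\sigma(Q)\,(\pi^* g_0)$.

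For the subprincipal statement, under the hypothesis $j^* f_0 = \pi^* g_0$ the principal symbol of $S_k Q_k - Q_k T_k$ vanishes, so by the exact sequence this difference lies in $\Fourier^1$ and equals $k^{-1}R_k$ with $(R_k)\in \Fourier$. To compute $\sigma(R_k)$ I would extract the $k^{-1}$ term in the stationary phase expansion of each of the two compositions separately. The ``static'' contributions produce the difference of the subprincipal symbols $j^* f_1 - \pi^* g_1$, evaluated against $\sigma(Q)$, exactly as in the intrinsic formula for the subprincipal symbol of a product of Toeplitz operators recalled after theorem \ref{theo:main}. The transport contribution $\tfrac{1}{i}\dLie_X \sigma(Q)$ comes from the standard ``half-Poisson bracket'' produced by stationary phase at second order: the first non-trivial correction in $S_k Q_k$ contributes $\tfrac{1}{2i}\{f_0, \cdot\}$ acting on the amplitude restricted to $\La$, while in $Q_k T_k$ it contributes the analogous term with $\pi^* g_0$; under the coincidence $j^*f_0 = \pi^* g_0$, the two half-brackets combine to give the full Hamiltonian derivative $\tfrac{1}{i}\dLie_X$ along $P$, where $X$ is the Hamiltonian field of $f_0$ (which is tangent to $P$ precisely because $f_0$ is $G$-invariant on $\mu^{-1}(0)$).

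The main obstacle will be the subprincipal computation: one has to keep careful track of the half-form factors, the transport equation along $\La$, and the sign/phase conventions so that the half-bracket terms from the left and right compositions combine into a genuine Lie derivative rather than a sum with opposite signs. This is the same kind of bookkeeping as in the subprincipal symbol calculus of \cite{oim_hf}, and I would follow that reference closely, verifying at each step that the $G$-invariance of $f_0$ ensures $X$ descends compatibly with the moment Lagrangian $\La$ so that $\dLie_X$ is well-defined as an operator on sections of $\delta\boxtimes \bar\delta_r|_{\La}$.
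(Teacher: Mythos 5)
Your principal-symbol argument is fine, and your observation that the exact sequence $0 \to \Fourier^1 \to \Fourier \xrightarrow{\sigma} \Ci(P) \to 0$ forces $S_kQ_k - Q_kT_k = k^{-1}R_k$ once the restricted symbols match is exactly the right first step. But your subprincipal computation rests on an intuition that does not quite apply here, and you miss the structural reduction that the paper uses to make the bookkeeping tractable.

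The paper's key move is to regard the Schwartz kernel $\tilde Q_k \in \Hilbert_k \otimes \overline{\Hilbert}_{r,k}$ of $Q_k$ as a \emph{Lagrangian section} of $L^k\boxtimes\bar L_r^k\otimes\delta\boxtimes\bar\delta_r$ on $M\times\overline{M}_r$, associated to the moment Lagrangian $\La$, and then to observe that $S_kQ_k$ and $Q_kT_k$ are obtained from $\tilde Q_k$ by applying the single Toeplitz operators $S_k\otimes\id$ and $\id\otimes\overline{T_k^*}$ of $M\times\overline{M}_r$. This turns the two-sided composition problem into the one-sided problem ``Toeplitz operator acting on a Lagrangian section,'' which is precisely what theorems 3.3 and 3.4 of \cite{oim_hf} treat: the symbol of $(O_k\tilde Q_k)$ is $(i^*e_0)\si$ when the principal symbol $e_0$ of $O_k$ is nonvanishing on $\La$, and when $e_0$ restricts to zero on $\La$, the symbol of $(kO_k\tilde Q_k)$ is $\bigl(i^*e_1 + \tfrac{1}{i}\dLie_X^{\delta\boxtimes\bar\delta_r}\bigr)\si$. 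Applying this with $O_k = S_k\otimes\id - \id\otimes\overline{T_k^*}$, whose principal symbol $p^*f_0-p_r^*g_0$ vanishes on $\La$ by hypothesis, gives the stated formula after one more lemma identifying $\varphi_\La(t_\delta^2)$ and hence showing the Lie derivative on $\delta\boxtimes\bar\delta_r$ over $\La$ matches the Lie derivative of $\si(Q)$ on $P$.

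Your proposal instead tries to extract the $k^{-1}$ term directly via ``half-Poisson brackets'' $\tfrac{1}{2i}\{f_0,\cdot\}$ from the two one-sided compositions, reasoning that they combine into a full Hamiltonian derivative. This is not quite the right picture: the half-bracket $\tfrac{1}{2i}\{f_0,g_0\}$ is the correction in the composition of \emph{two Toeplitz operators}, i.e. two operators whose kernels concentrate on the diagonal, whereas $Q_k$ has kernel concentrated on the moment Lagrangian $\La$, and the subprincipal correction you need is the transport/Lie-derivative term of the Lagrangian-section calculus, acting on a half-form symbol, not a Poisson bracket of two scalar symbols. These are genuinely different objects (the transport term has a half-form Jacobian piece that the Poisson bracket formula does not see), and they cannot simply be ``combined by bookkeeping'' without rederiving the transport equation for Lagrangian sections — which is precisely what the cited theorems of \cite{oim_hf} provide, and what the paper's reduction lets you invoke off the shelf. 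So the gap is not just sign conventions: you need to recognize that the correct framework is the Lagrangian-section symbol calculus on $M\times\overline M_r$, and the slickest way to access it is the $S_k\otimes\id$ / $\id\otimes\overline{T_k^*}$ reinterpretation.
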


The previous theorem are proved in the following sections. Before let us deduce the result about the reduction of Toeplitz
operators. Introduce the isomorphism of Hilbert spaces: 
$$ U_k = W_k ( W_k^* W _k)^{-\frac{1}{2}} :  H^0(M_r, L^k_r \otimes \delta_r) \rightarrow H^0_G(M, L^k
\otimes \delta ) . $$ 
As already noted, $ \bigl( ( \frac{k }{2 \pi} )^{\ell / 2} W_k^* W_k\bigr)$ is a
Toeplitz operator with principal symbol the constant function equal to
1. So the same holds for $( ( \frac{k }{2 \pi}
)^{-\ell/4} (W_k^* W_k\bigr)^{-1/ 2})$.   
Then theorem \ref{sec:sous-princ} implies that $(U_k)$ is a Fourier integral operator of
$\Fourier$ with symbol the constant function equal to 1.

\begin{cor} \label{the_corollaire}
Let $(S_k)$ be a Toeplitz operator of $M$ with principal symbol
$f_0$. Then  $(U^*_k  S_k U_k)$ is a Toeplitz operator of $M_r$ whose
principal symbol $g_0$ is given by  
$$ g_0 (z) = \int_{P_z} f_0 (x) \mu_{P_z} (x) $$
If furthermore $f_0$ is $G$-invariant, then the subprincipal symbol
is 
$$ g_1 (z) = \int_{P_z} f_1 (x) \mu_{P_z} (x) $$
where $f_1$ is the subprincipal symbol of $(S_k)$. 
\end{cor}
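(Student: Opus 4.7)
The plan is to assemble the corollary from theorems \ref{sec:isoGS}, \ref{sec:compF} and \ref{sec:sous-princ}, using the two facts established in the paragraph just before the corollary: the sequence $(U_k)$ lies in $\Fourier$ with principal symbol equal to the constant function $1$, and $U_k^* U_k = \id$ on $H^0(M_r, L_r^k \otimes \delta_r)$. This last identity is immediate from $U_k = W_k (W_k^* W_k)^{-\frac{1}{2}}$ once we know via theorem \ref{sec:GS_isomorphism} that $W_k$ is an isomorphism for $k$ large.

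For the principal symbol I will first apply theorem \ref{sec:sous-princ} to the product $S_k \cdot U_k \cdot \id$ (with $\id$ viewed as the trivial Toeplitz operator on $M_r$ of symbol $1$) to get that $(S_k U_k) \in \Fourier$ has symbol $(j^* f_0) \cdot \sigma(U_k) \cdot 1 = j^* f_0$. Then $U_k^*(S_k U_k)$ is of the form $Q_k^* R_k$ handled by theorem \ref{sec:compF}, with $\sigma(Q_k) = 1$ and $\sigma(R_k) = j^* f_0$, and that theorem directly identifies it as a Toeplitz operator of $M_r$ with principal symbol
$$ g_0(z) = \int_{P_z} \overline{1} \cdot j^* f_0 \; \mu_{P_z} = \int_{P_z} f_0(x) \, \mu_{P_z}(x). $$

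For the subprincipal symbol, assume now that $f_0$ is $G$-invariant, so that $j^* f_0 = \pi^* g_0$. Pick any auxiliary Toeplitz operator $T_k$ of $M_r$ with principal symbol $g_0$ and with an arbitrary subprincipal symbol $\tilde g_1$. The second, refined part of theorem \ref{sec:sous-princ} will then yield
$$ S_k U_k - U_k T_k = k^{-1} R_k, $$
where $(R_k) \in \Fourier$ has symbol
$$ \sigma(R_k) = \bigl( j^* f_1 - \pi^* \tilde g_1 + \tfrac{1}{i} \dLie_X \bigr) \sigma(U_k) = j^* f_1 - \pi^* \tilde g_1, $$
the Lie-derivative term dropping out because $\sigma(U_k)$ is the constant function $1$. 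Multiplying on the left by $U_k^*$ and using $U_k^* U_k = \id$, I obtain $U_k^* S_k U_k = T_k + k^{-1} U_k^* R_k$, and theorem \ref{sec:compF} shows that $U_k^* R_k$ is a Toeplitz operator of $M_r$ with principal symbol $\int_{P_z}(f_1 - \tilde g_1)\mu_{P_z} = \int_{P_z} f_1 \,\mu_{P_z} - \tilde g_1$. Reading this as a correction to the subprincipal symbol of $T_k$, the operator $U_k^* S_k U_k$ is Toeplitz on $M_r$ with principal symbol $g_0$ and subprincipal symbol
$$ \tilde g_1 + \Bigl( \int_{P_z} f_1 \, \mu_{P_z} - \tilde g_1 \Bigr) = \int_{P_z} f_1(x) \, \mu_{P_z}(x), $$
which is, as it must be, independent of the auxiliary choice $\tilde g_1$.

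The corollary itself, given the three theorems in hand, will thus reduce to the above bookkeeping. The real work is concentrated in theorem \ref{sec:sous-princ}, and in particular in its second part: the appearance of the exact corrective term $\tfrac{1}{i}\dLie_X$ in $\sigma(R_k)$ is precisely what produces the cancellation of $\tilde g_1$ above and leaves the clean averaged formula. Establishing the symbolic calculus with \emph{that} specific first-order correction, rather than some other connection-dependent term, is where I expect the main analytic difficulty to sit, consistent with the author's own description of the subprincipal estimates as the most delicate part of the paper.
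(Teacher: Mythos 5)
Your argument is correct and follows essentially the same route as the paper: it assembles the corollary from theorems \ref{sec:isoGS}, \ref{sec:compF}, and \ref{sec:sous-princ}, multiplies $S_kU_k - U_kT_k = k^{-1}R_k$ on the left by $U_k^*$, and invokes $U_k^*U_k = \id$. The only (inessential) difference is that the paper immediately takes $T_k$ with subprincipal symbol $\int_{P_z}f_1\,\mu_{P_z}$ so that $\sigma(R_k)=0$ and $U_k^*R_k$ has vanishing principal symbol, whereas you allow an arbitrary $\tilde g_1$ and verify the cancellation, which amounts to the same calculation with the independence of the choice made explicit.
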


The computation of the principal symbol is an immediate
consequence of the previous theorems. To compute the subprincipal term,
introduce a Toeplitz operator $(T_k)$ of $M_r$ whose principal symbol is the function $g_0$ defined by the
integral in the statement of the corollary. Then by theorem
\ref{sec:sous-princ}, one has $ S_k U_k - U_kT_k = k^{-1}R_k$ where
$(R_k)$ is a Fourier integral operator with symbol $ i^* f_1 - \pi^*
g_1$ and $g_1$ is the subprincipal symbol of $(T_k)$. Assume
that $g_1$ is equal to the second integral in statement of the corollary so that the symbol of
$(R_k)$ vanishes. Composing with $U_k^*$, it comes that   $$U_k^* S_k U_k   - T_k  =
k^{-1}U_k^* R_k  . $$ 
By theorem \ref{sec:compF}, $(U_k^* R_k )$ is a Toeplitz operator with
vanishing principal symbol. Hence $(U_k^* S_k U_k)$ and $(T_k )$ have
the same principal and subprincipal symbols.

\subsection{Proof of theorem \ref{sec:isoGS}}

The proof relies on some properties of Toeplitz operators proved in
\cite{oim_bt} that we recall now. First the Schwartz kernel of a Toeplitz
operator is described in a similarly way as the
one of a Fourier integral operators in section \ref{sec:four-integr}.  The Schwartz kernel
of a Toeplitz operator $(T_k)$ of $M_r$ is a sequence of 
holomorphic sections of  
$$ (L_r^k \otimes \delta_r ) \boxtimes (\bar{L}^k_r \otimes \overline{\delta}_r)
\rightarrow M_r \times \overline{M}_r,$$
of the form 
\begin{gather} \label{noyau_toep} 
 \Bigl( \frac{k}{2\pi} \Bigr) ^{n_r}   E_r^k (x,y) f (x,y,k)
+ O( k^{-\infty}). 
\end{gather}  
where $n_r$ is the complex dimension of $M_r$, $E_r$ and $ f( \cdot, k)$ satisfy similar assumptions to those
of section \ref{sec:four-integr} with the moment Lagrangian replaced
by the diagonal. Precisely, $E_r$ is a section of $L_r
 \boxtimes \bar{L}_r \rightarrow M_r \times \overline{M}_r$ whose
 restriction to the diagonal satisfies
$$ E_r (x,x) = u \otimes \bar u$$
for any unitary vector $u \in L_{r,x}$. Furthermore $\bar{\partial}
E_r$ vanishes to any order along the diagonal and outside the diagonal
one has  $ \| E_r (x,y ) \| < 1$. $(f ( \cdot
, k))$ is a sequence of sections of  $ \delta_r  \boxtimes  \overline{\delta}_r
\rightarrow M_r \times \overline{M}_r$ which admits an asymptotic expansion in the $\Ci$
  topology of the form 
$$ f(.,k) = f_0 + k^{-1} f_1 + k^{-2} f_2 + ...$$
whose coefficients satisfy $\bar{\partial} f_i \equiv 0  $
modulo a section vanishing to any order along the diagonal.
Finally we recover the principal symbol $h$ of the operator $(T_k)$
from the first coefficient $f_0$ by 
$$ f_0(x,x) = h(x) t_r(x,x) $$
Here $t_r$ is the section of $\delta_r \boxtimes \overline{\delta}_r$
over the diagonal of  $M_r^2$ such that 
\begin{gather}\label{eq:deftr}  
t_r(x,x) = u \boxtimes \overline u
\end{gather}
if  $u$ is  a unitary vector of  $\delta_{r,x}$.

\begin{lemma} Let $(T_k)$ be a Toeplitz operator of $M_r$ with symbol
  $h$. Then  
 $$\Bigl( \frac{k}{2\pi} \Bigr) ^{\frac{\ell}{4}} W_k T_k$$ is a
 Fourier integral operator of  $\Fourier$ with symbol $\pi^* h$, where
  $\pi$ is the projection $P \rightarrow M_r$.
\end{lemma}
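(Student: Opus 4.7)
The plan is to construct directly a candidate Fourier integral operator $(\tilde Q_k)\in\Fourier$ with symbol $\pi^{*}h$ and then identify it with $(k/2\pi)^{\ell/4} W_kT_k$ modulo $O(k^{-\infty})$ using the characterizing property of $W_k$ as the inverse of the Guillemin-Sternberg restriction $V_k$. Start from the Toeplitz form of the kernel of $T_k$ on $M_r\times\overline{M}_r$,
\[
 K_{T_k}(y,z)=\Bigl(\frac{k}{2\pi}\Bigr)^{n_r}E_r^k(y,z)f(y,z,k)+O(k^{-\infty}),
\]
with $f(y,y)=h(y)\,t_r(y,y)$ on the diagonal. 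Viewing $W_k$ as acting on the first variable, the kernel of $W_kT_k$ on $M\times\overline{M}_r$ is $K_{W_kT_k}(x,z)=W_k[K_{T_k}(\cdot,z)](x)$.

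First I would build the phase $F$: an almost-holomorphic extension of the flat unitary section $t$ from the moment Lagrangian $\Lambda$ to a section of $L\boxtimes\bar L_r\to M\times\overline{M}_r$, with $\bar\partial F$ vanishing to infinite order along $\Lambda$, $\|F(x,z)\|<1$ off $\Lambda$, and imposing $G^{\C}$-equivariance in the first variable (consistent with the $G$-invariance of $t$ on $\Lambda$). Then I would inductively construct a formal series $g(x,z,k)=g_{0}+k^{-1}g_{1}+\ldots$ of sections of $\delta\boxtimes\bar\delta_r$, each $G^{\C}$-equivariant in the first variable and almost-holomorphic along $\Lambda$, prescribing $g_0$ on $\Lambda$ by
\[
 g_0(x,\pi(x))=h(\pi(x))\,t_{\delta}(x,\pi(x)),\qquad x\in P.
\]
This defines a candidate operator $\tilde Q_k$ by the Schwartz kernel $(k/2\pi)^{n_r+\ell/4}F^{k}g(\cdot,\cdot,k)+O(k^{-\infty})$, which has by construction the form required by \eqref{noyau_FIO} with symbol $\pi^{*}h$.

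Next I would verify that $V_k\tilde Q_k=(k/2\pi)^{\ell/4}T_k+O(k^{-\infty})$ and that the image of $\tilde Q_k$ lies asymptotically in $H_G^0(M,L^k\otimes\delta)$. The second property follows from the $G^{\C}$-equivariance built into $F$ and the $g_i$. For the first, a Laplace/stationary-phase computation restricts the kernel $\tilde K$ to $P\times\overline{M}_r$ and averages over $G$-orbits via the isomorphism $P/G\simeq M_r$: the factor $F^k$ is Gaussianly peaked on $\Lambda$, with $n_r$ transverse complex directions coming from the diagonal in $M_r^{2}$ and an additional $\ell$ real transverse directions corresponding to the $G^{\C}$-orbits meeting $P$, whose Gaussian width is controlled by Proposition \ref{sec:estimation}. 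The $n_r$ directions absorb the prefactor $(k/2\pi)^{n_r}$ as in the usual Toeplitz reproducing calculation, and the $\ell$ transverse directions absorb $(k/2\pi)^{\ell/4}$ once the normalization \eqref{eq:6} between the metrics of $\delta$ and $\delta_r$ (with the $\|\gamma\|$ factor defined in \eqref{eq:7}, \eqref{eq:8}) and the volume-one normalization of $G$ are taken into account. Matching orders in $k^{-1}$ with $T_k$ then determines the higher-order $g_i$ by solvable transport equations along $\Lambda$.

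Finally, since $V_k$ is, for $k$ sufficiently large, an isomorphism of $H_G^0(M,L^k\otimes\delta)$ onto $H^0(M_r,L_r^k\otimes\delta_r)$ with inverse $W_k$ (Theorem \ref{sec:GS_isomorphism}), the two identities just established force $\tilde Q_k=(k/2\pi)^{\ell/4}W_kT_k+O(k^{-\infty})$, so $(k/2\pi)^{\ell/4}W_kT_k$ is in $\Fourier$ with symbol $\pi^{*}h$. The main obstacle will be the Laplace-method step producing the precise constant $(k/2\pi)^{\ell/4}$ and the correct leading value of $g_0$ along $\Lambda$: this requires combining the Gaussian estimate of Proposition \ref{sec:estimation} in the $G^{\C}$-orbit directions with the rescaling $\|\gamma\|$ in \eqref{eq:6} and the normalization of the invariant metric on $\Lie$, so that the orbit integral reproduces exactly the reproducing action of the Toeplitz kernel on $M_r$.
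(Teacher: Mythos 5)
Your overall strategy---building a candidate Fourier integral operator with the right symbol and then identifying it with $(k/2\pi)^{\ell/4}W_kT_k$ via the isomorphism $V_k$---is more indirect than the paper's but is not wrong in principle. The genuine gap is in the verification step. You describe $V_k\tilde Q_k$ as arising from a ``Laplace/stationary-phase computation'' that ``averages over $G$-orbits'' and claims that the $\ell$ transverse directions ``absorb $(k/2\pi)^{\ell/4}$''. That is not what $V_k$ does. By its definition (\ref{eq:GS_def}), $V_k$ is the map that restricts a $G$-invariant holomorphic section to the zero set $P=\mu^{-1}(0)$ and passes to the quotient $P/G\simeq M_r$ using the $G$-equivariant identification of the reduced bundles. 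There is no integration over orbits and no phase to be stationary. Applied to the Schwartz kernel of $\tilde Q_k$ in the first variable, $V_k$ simply restricts the kernel to $P\times\overline{M}_r$ and descends; the prefactor $(k/2\pi)^{n_r+\ell/4}$ in (\ref{noyau_FIO}) matches $(k/2\pi)^{\ell/4}\cdot(k/2\pi)^{n_r}$ with the factor in (\ref{noyau_toep}) identically, so the normalization $(k/2\pi)^{\ell/4}$ is inserted by hand in the statement rather than produced by any Gaussian integral. Your announced ``main obstacle''---extracting $(k/2\pi)^{\ell/4}$ and the leading $g_0$ from a Laplace method, using $\|\gamma\|$, (\ref{eq:6}), (\ref{eq:7}) and the volume normalization of $G$---does not exist in this lemma; those ingredients belong to the proofs of Theorem \ref{sec:compF} and the computation of $W_k^*W_k$, where an actual Gaussian integral over the $G^\C$-directions arises.

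The paper's proof dispenses with the candidate construction entirely: since $W_k$ extends $G^\C$-equivariant sections from the stable set $M_s$, the Schwartz kernel of $W_kT_k$ over $M_s\times M_r$ is precisely the pull-back of the Toeplitz kernel of $T_k$ by $\pi_s\times\operatorname{id}$. Choosing $F=(\pi_s\times\operatorname{id})^*E_r$ immediately gives (\ref{noyau_FIO}) (with symbol $\pi^*h$, because the pull-back of $t_r$ on the diagonal equals $t_\delta$ on the moment Lagrangian $\La$), and Proposition \ref{sec:estimation} supplies the $O(k^{-\infty})$ decay off $\La$ and off $M_s$. If you wish to salvage the indirect route, replace the Laplace step with the elementary observation that since your $F$ and $g_i$ are $G^\C$-equivariant in the first variable and agree on $\La$ with $t$ and $h\cdot t_\delta$, their restrictions to $P\times\overline{M}_r$ descend to admissible data $E_r$, $f_i$ for the kernel of $T_k$, so the identity $V_k\tilde Q_k=(k/2\pi)^{\ell/4}T_k+O(k^{-\infty})$ is a tautology; you should also note that the almost-holomorphic kernel must be corrected by the orthogonal projector onto holomorphic sections, a harmless $O(k^{-\infty})$ change, before $V_k$ can be applied.
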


Theorem \ref{sec:isoGS} follows.
\begin{proof}  Recall that we denote by $\pi_s$ the projection from the stable
  set $M_s$ onto $M_r$. By definition of the Guillemin-Sternberg
  isomorphism, the Schwartz kernel of $W_k  T_k$ over $M_s \times M$
is the pull-back of the Schwartz kernel of $T_k$ by the projection $\pi_s \times
\id$. The result follows by comparing (\ref{noyau_FIO}) and
(\ref{noyau_toep}). We deduce from proposition \ref{sec:estimation},
that the Schwartz kernel of  $W_k 
T_k$ and its successive derivatives are $O( k^{-\infty})$ on the
complementary set of $ \La$. Next observe that we can choose the
sections $E_r$ and $F$ in such a way that  
\begin{gather} \label{eq:2'}
(\pi_s \boxtimes \id )^* E_r = F
\end{gather}
and the final result follows easily. 
\end{proof} 

\subsection{Proof of theorem \ref{sec:compF}}

The Schwartz kernel of $Q_k^* R_k$ is given by the following integral 
\begin{gather} \label{eq:3}
 \Bigl( \frac{k}{2 \pi} \Bigr)^{2n_r + \frac{\ell}{2}}  \int_{M} \overline{F}^k (y, z_1 ) .F^k(y,z_2) f(y,z_1,z_2,k) \; \mu_M
(y)
\end{gather}
where $f( \cdot , k)$ admits an asymptotic expansion in inverse power
of $k$. The leading order satisfies
$$ f(x,z,z,k) =  \overline{f_Q (x)} f_R (x) \; \overline{t_{\delta}
(x,z)}. t_{\delta} (x,z) +O(k^{-1}) $$
for any $(x,z)$ in the moment Lagrangian, where $f_Q$ and $f_R$ are the symbols of $(Q_k)$ and $(R_k)$ 
respectively. We deduce from (\ref{eq:6}), (\ref{eq:16}) and
(\ref{eq:deftr}) that 
\begin{gather} \label{eq:17}
 f(x,z,z,k) =  \overline{f_Q (x)} f_R (x) 2^{\frac{\ell}{2}} \operatorname{det}
 ^{-\frac{1}{2}} [ g ( \xi_i^\#, \xi _j^ \#) ] (x) \; t_r (z,z) +
 O(k^{-1} )
\end{gather}
Let us compute (\ref{eq:3}). First, since the punctual norm of $F$ is smaller than $1$ outside $\La$, 
by modifying the integral by a $O(k^{-\infty})$ if necessary, we may assume
that the support of $f$ is a compact of $M_r \times M_s \times M_r$. 
So we just integrate on the stable set. 
Let us identify $M_s$  with $\Lie \times P$ by the
diffeomorphism (\ref{eq:1}). We will integrate
successively on $\Lie$, then on the fiber of $\pi: P
\rightarrow M_r$ and finally on $M_r$. To do that, we write the
Liouville measure on the stable set in the following way. 
\begin{lemma} \label{sec:lemme_mesure}
We have over $\Lie \times P$ 
$$ \mu_M(\xi , x) = \delta (\xi , x)  \mu_P(x) | dt_1 ...d t_\ell | (\xi) $$
where $\mu_P$  is the invariant measure of $P$ whose push-forward by
the projection $P \rightarrow M_r$ is the Liouville measure $\mu_{M_r}$ 
of $M_r$, $t_1, \ldots, t_\ell$ are linear coordinates of $\Lie$
in a orthogonal base $\xi_1, \ldots , \xi_\ell$ and $\delta \in
\Ci ( \Lie \times P)$ satisfies
$$ \delta ( 0, x) = \det [ g ( \xi_i^\#, \xi _j^ \#) ] (x), \qquad
\forall x\in P . $$ 
\end{lemma}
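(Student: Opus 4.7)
The plan is to compute the Jacobian of the diffeomorphism $\Phi:(\xi,x)\mapsto \exp(i\xi).x$ at points $(0,x)$ with $x\in P$, using that the Liouville measure coincides with the Riemannian volume of the Kähler metric $g(X,Y)=\om(X,JY)$, then to check the normalization of $\mu_P$.

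First, I would establish the key decomposition of $T_xM$ for $x\in P$. Set $V:=T_x(G.x)=\{\xi^\#(x),\;\xi\in\Lie\}$, a subspace of dimension $\ell$. The general reduction picture gives $T_xP=V^{\perp_\om}$, so $V\subset T_xP$. Choose an orthogonal (for $g$) complement $H$ of $V$ in $T_xP$; since $\om$ vanishes on $V\times T_xP$ and $JV\perp T_xP$ means nothing a priori, I compute $g(h,J\xi^\#)=\om(h,\xi^\#)=0$ for $h\in T_xP$, $\xi\in\Lie$, so $H$ is also $g$-orthogonal to $JV$. Thus $T_xM=V\oplus H\oplus JV$ is a $g$-orthogonal decomposition. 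Moreover $g(\xi_i^\#,J\xi_j^\#)=\om(\xi_i^\#,\xi_j^\#)=-\xi_j^\#(\mu^{\xi_i})(x)=0$ since $\mu^{\xi_i}|_P=0$ and $\xi_j^\#\in T_xP$, so $V$ and $JV$ are themselves $g$-orthogonal.

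Next, I would compute $d\Phi_{(0,x)}$: a vector $(\xi,0)$ is sent to $\frac{d}{dt}|_{t=0}\exp(it\xi).x=J\xi^\#(x)$, while $(0,v)$ with $v\in T_xP$ is sent to $v$. Let $(\xi_1,\ldots,\xi_\ell)$ be the chosen orthonormal basis of $\Lie$, and let $(h_1,\ldots,h_{2n_r})$ be a basis of $H$. Then on the basis $\bigl((\xi_1,0),\ldots,(\xi_\ell,0),(0,\xi_1^\#),\ldots,(0,\xi_\ell^\#),(0,h_1),\ldots,(0,h_{2n_r})\bigr)$ of $T_{(0,x)}(\Lie\times P)$, the Liouville measure $\mu_M=\om^n/n!$, viewed as the $g$-Riemannian volume on the image basis $(J\xi_i^\#,\xi_j^\#,h_k)$, equals $\sqrt{\det\mathcal G}$ where $\mathcal G$ is the Gram matrix of $g$ on this basis. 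By the orthogonality above, $\mathcal G$ is block-diagonal with blocks $A:=[g(\xi_i^\#,\xi_j^\#)]$ (for $JV$), $A$ again (for $V$, since $J$ is a $g$-isometry), and $D:=[g(h_i,h_j)]$ (for $H$). Hence
$$ \Phi^*\mu_M\bigl((\xi_1,0),\ldots,(0,h_{2n_r})\bigr)=\det A\;\sqrt{\det D}.$$

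Finally, I would evaluate the right-hand side on the same basis. Since $(\xi_i)$ is orthonormal and $t_1,\ldots,t_\ell$ are its dual coordinates, $|dt_1\cdots dt_\ell|(\xi_1,\ldots,\xi_\ell)=1$. For $\mu_P$, recall $\pi_*\mu_P=\mu_{M_r}$ with $G$-fibers of volume $1$: locally $\mu_P$ factors as the product of the invariant fiber measure of total mass $1$ with $\mu_{M_r}$ pulled back. Since $G$ has unit Riemannian volume in the chosen metric, the orbit map sends $(\xi_1,\ldots,\xi_\ell)$ to $(\xi_1^\#,\ldots,\xi_\ell^\#)$ with Jacobian $1$, so the invariant fiber measure evaluates to $1$ on $(\xi_1^\#,\ldots,\xi_\ell^\#)$. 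The images $\bar h_i=\pi_*h_i$ form a basis of $T_{\pi(x)}M_r$, and since $H$ is the $g$-orthogonal of $V$ in $T_xP$ it is the horizontal subspace for the reduction, so $g_r(\bar h_i,\bar h_j)=g(h_i,h_j)=D_{ij}$. Therefore $\mu_P(\xi_1^\#,\ldots,\xi_\ell^\#,h_1,\ldots,h_{2n_r})=\sqrt{\det D}$. Comparing with the computation of $\Phi^*\mu_M$, I read off $\delta(0,x)=\det A=\det[g(\xi_i^\#,\xi_j^\#)](x)$, as claimed. The only real obstacle is the identification of $H$ as the horizontal subspace of the Riemannian submersion $P\to M_r$, which is exactly what makes the reduced metric come out correctly; the rest is bookkeeping.
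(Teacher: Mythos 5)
Your proof is correct. The paper itself gives no argument for this lemma — it only remarks that it "follows from the fact that the Riemannian volume of $G$ is 1" and cites Lemma 4.20 of an earlier paper — so there is nothing to compare against line by line, but the computation you carry out is exactly the one that hint points to. The three ingredients are all present and correctly used: the $g$-orthogonal splitting $T_xM = V \oplus H \oplus JV$ at $x\in P$ (with $V=T_x(G.x)$ and $H$ the $g$-orthocomplement of $V$ in $T_xP$), the identification $d\Phi_{(0,x)}(\xi,0) = J\xi^\#(x)$, and the fact that $H$ is the horizontal space for the Kähler submersion $P\to M_r$ so that $g|_H$ descends to the reduced Kähler metric, whence $\mu_{M_r}$ on $(\pi_*h_i)$ equals $\sqrt{\det D}$. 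The only blemishes are cosmetic sign slips: with $g(X,Y)=\om(X,JY)$ one gets $g(h,J\xi^\#)=-\om(h,\xi^\#)$ and $g(\xi_i^\#,J\xi_j^\#)=-\om(\xi_i^\#,\xi_j^\#)$, not the signs you wrote, but since in both cases you only use that the quantity vanishes this does not affect the argument.
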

This follows from the fact that the Riemannian volume of $G$ is 1
(cf. lemma 4.20 of \cite{oim_qr} for a proof).
Furthermore we deduce from (\ref{eq:2'}) that for $y = \exp (i\xi ). x$ with $x \in
P$, one has 
$$  \overline{F}^k (y, z_1 ). F^k(y,z_2) =  e^{-k \varphi (\xi,x)}
\overline{F}^k (x, z_1 ). F^k(x,z_2)
$$
with
$$ \varphi (\xi,x )= - \ln \Bigl( \frac{ \| \exp ( i \xi ) . u \| ^2
}{\| u \| ^2 } \Bigr), \qquad \text{ if } u \in L_x .$$
By the Fubini theorem, the integral (\ref{eq:3}) is equal to  
\begin{gather} \label{eq:4}
  \Bigl( \frac{k}{2 \pi} \Bigr)^{2n_r }  \int_{M_r} \overline{E}_r^k (z, z_1 ) .E_r^k(z,z_2)  f'' (z,z_1,z_2,k)  \; \mu_{M_r} (z)
\end{gather}
where $f''$ is the function of $M_r^3$ given by 
$$  f'' (z,z_1,z_2,k)   = \int_{P_z} f'(x,z_1,z_2) \;
\mu_{P_z} (x)$$
with  $\mu_{P_z}$ the $G$-invariant measure of $P_z$ with total volume
1. And  $f'$ is the function of $P \times M_r^2$ given by 
\begin{gather*} %\label{eq:5}
 f'(x,z_1,z_2,k) =  \Bigl( \frac{k}{2 \pi} \Bigr)^{\frac{\ell}{2}} \int_{\Lie } e^{-k \varphi (\xi, x)}   f(\exp( i\xi).x,z_1,z_2,k) \delta (\xi , x)   | dt_1 ...d t_\ell |(\xi)  
\end{gather*}
We estimate this integral by the stationary phase lemma. As we
already saw in the proof of proposition \ref{sec:estimation}, we have  
$$ \partial_{t^i} \varphi (\xi, x)  = - 2 \mu^{\xi^i} ( \exp(i\xi).x) $$ 
and the critical set of $\varphi$ is $\{ 0\} \times
P$. The second derivatives are 
\begin{gather} \label{eq:sd}
 \partial_{t^j} \partial_{t^i} \varphi (0, x)  = 2 g ( \xi_i^\#, \xi
_j^ \#) (x) .
\end{gather}
Since this matrix is non-degenerate, $f'$ has an asymptotic expansion in power of $k^{-1}$ with
coefficients in $\Ci ( P \times M_r^2)$. The first order term is given
by 
\begin{xalignat*}{2}
 f'(x,z_1,z_2,k) = &  f(x,z_1,z_2,k) \delta (0,x)  \operatorname{det}
 ^{-\frac{1}{2}} [ \partial_{t^j} \partial_{t^i} \varphi (0, x) ] +
 O(k^{-1} ) 
\end{xalignat*}
By (\ref{eq:sd}) and lemma \ref{sec:lemme_mesure}, it follows that 
\begin{xalignat*}{2}
 f'(x,z_1,z_2,k) 
= & f(x,z_1,z_2,k) 2^{- \frac{\ell}{2}}\operatorname{det}
^{\frac{1}{2}} [ g ( \xi_i^\#, \xi _j^ \#) ] (x) +O(k^{-1})
\end{xalignat*}
By (\ref{eq:17}), we obtain for any $(x,z)$ in the momentum Lagrangian
$\La$ that 
$$ f'(x, z,z , k)=  \overline{f_Q (x)} f_R (x) \;
\mu_{P_z}(x) \; t_r(z,z) + O(k^{-1})$$
 Consequently  
$$ f''(z,z,z,k) = \int _{P_z} \overline{f_Q (x)} f_R (x) \;
\mu_{P_z}(x) \; t_r(z,z) + O(k^{-1}) $$
To end the proof we have to compute the integral (\ref{eq:4}). We
recognize the integral corresponding to the composition of two
Toeplitz operators (cf. \cite{oim_bt}). So we obtain the Schwartz
kernel of a Toeplitz operator whose principal symbol is the function
$g$ such that  
$$ f''(z,z,z,k) = g (z) t_r(z,z) + O(k^{-1}) $$
which completes the proof. 

\subsection{Proof of theorem \ref{sec:sous-princ}}

Let us sketch the proof. Consider a Fourier integral
operator $(Q_k) \in \Fourier$ and two Toeplitz operators $(S_k)$ and 
$(T_k)$ of $M$ and $M_r$ respectively. Then the  Schwartz kernel of $S_k Q_k
$ is the image of the Schwartz kernel of $Q_k$ by the map 
 $$ S_k \otimes
\id : \Hilbert_{k} \otimes \overline{\Hilbert}_{r,k} \rightarrow \Hilbert_{k} \otimes \overline{\Hilbert}_{r,k} $$ 
Similarly the Schwartz kernel of $Q_k$ is sent into the Schwartz
kernel of $Q_kT_k$ by 
$$ \id \otimes \overline{ T_k^*} :  \Hilbert_{k} \otimes \overline{\Hilbert}_{r,k} \rightarrow \Hilbert_{k} \otimes \overline{\Hilbert}_{r,k} $$ 
$(S_k \otimes \id)$ and $( \id \otimes \overline{T_k^*})$ are
Toeplitz operators of $M \times \overline{M}_r$. The Schwartz kernel
of $(Q_k)$ is a Lagrangian section of $M \times \overline{M}_r$
associated to the moment Lagrangian in the same way the eigenstates in
section \ref{sec:etats-propres} are Lagrangian sections associated to the fiber of the
integrable system. So we are reduced to consider the action
of a Toeplitz operator on a Lagrangian section. This gives another
Lagrangian section and in the favorable cases the subprincipal terms
may be computed as it was explained in the paper \cite{oim_hf}, theorems
3.3 and 3.4. This will prove the theorem.  

Let us apply this program. We denote by $p$ and $p_r$  the projection from $M \times M_r$ onto the
first and second factor. Let $f_0$, $f_1$ and $g_0$, $g_1$ be the
principal and subprincipal symbols of $(S_k)$ and $(T_k)$.  
Then it is easily seen that the principal and
subprincipal symbols of $(S_k \otimes \id)$ (resp. $(\id 
\otimes \overline{T^*_k})$) are $p^*f_0$ and $p^*f_1$  (resp. $p_r^*
g_0 $ and $p_r^* g_1$).
 
We denote by $(\tilde{Q}_k)$ the sequence of the Schwartz kernels
of $(Q_k)$. Let  $h \in \Ci(P)$ be the symbol of the Fourier integral
operator $(Q_k)$. Then the symbol of the Lagrangian section
$(\tilde{Q}_k)$ is the section $\si$ of $\delta \boxtimes \overline{\delta}_r
\rightarrow \La$ given by 
$$ (x,z) \in \La  \rightarrow \si ( x, z) = h(x)  t_{\delta} (x,z)
$$
where $t_\delta$ has been introduced in the beginning of section
\ref{sec:four-integr}. 

Consider a Toeplitz operator $(O_k)$ of $M \times
\overline{M_r}$ with principal symbol $e_0$. Then by theorem 3.3 of
\cite{oim_hf} the symbol of $(O_k \tilde{Q}_k)$ is the product of $\si$ by the
restriction of $e_0$ to the moment Lagrangian $\La$. Applying this with the
operator $O_k = S_k \otimes
\overline{T^*_k}$, we deduce the
first part of theorem \ref{sec:sous-princ}. Indeed, since 
$$S_k \otimes
\overline{T^*_k} = ( S_k \otimes \id ) \circ (\id \otimes \overline{T^*_k}),$$ the principal
symbol of $(O_k)$ is $(p^* f_0) (p_r^*
g_0)$. Identifying $\Ci ( \La)$ with $\Ci (P)$, the restriction of
this symbol to the moment Lagrangian becomes $(j^* f_0 )( \pi^*
f_1)$, where $j$ is the injection $P \rightarrow M$ and $\pi$ is the
projection $P \rightarrow M_r$.  So the symbol of the Fourier integral
operator $(S_k Q_k T_k)$ is $(j^* f_0 )( \pi^*
f_1) h$. This proves the first assertion in theorem
\ref{sec:sous-princ}. 

The proof of the second one is the difficult part. Assume that the restriction of the principal symbol $e_0$ to the moment
Lagrangian vanishes. Then the symbol of $(O_k.\tilde{Q}_k)$ vanishes and
$(k O_k \tilde{Q}_k)$ is another Lagrangian section. By theorem 3.4 of
\cite{oim_hf}, its symbol is equal to  
\begin{gather} \label{eq:ss}
 \bigl( i^ * e_1 + \frac{1}{i} \dLie_X^{\delta \boxtimes \overline{\delta_r}} \bigr) \si
\end{gather}
Here $e_1$ is the subprincipal symbol of $(O_k)$ and $i^*e_1$ its
restriction to the momentum Lagrangian $\La$.  Since $e_0$
is constant over the Lagrangian manifold $\La$, its
Hamiltonian vector field is tangent to $\La$. $X$ is defined as the restriction 
to $\La$ of this Hamiltonian vector field.   

Finally the Lie
derivative $\dLie_X^{\delta \boxtimes \overline{\delta_r}}$ has the following sense. On the one hand, 
$\delta
\boxtimes \overline {\delta_r}$ is naturally a half-form bundle of $M
\times \overline{M_r}$. 
Recall that we denoted by $\varphi$ the isomorphism $\delta^2
\rightarrow  \dcan T^*M$ and by $\varphi_r $ the isomorphism
$\delta_r^2 \rightarrow \dcan T^*M_r$. Then the map
$$    \varphi ^2 \boxtimes \overline {\varphi_r}^2 : \delta ^{2}
\boxtimes \overline {\delta_r}^{2} \rightarrow  \dcan T^*M \boxtimes
\overline{\dcan T^*M_r} \simeq \dcan T^*(M \times \overline{M_r} )  $$
is an isomorphism of Hermitian holomorphic bundles. 
On the other hand the moment Lagrangian $\La$ being a Lagrangian submanifold
of $M
\times M_r^-$, the pull-back by the embedding $i : \La
\rightarrow M \times M_r$ induces an
isomorphism 
$$ i ^* : i ^ *( \dcan T^*(M \times \overline{M_r} )) \rightarrow \dvol T^*
\La \otimes \C$$
Let $\varphi_{\La}$ be the composition of these isomorphisms   
$$ \varphi_{\La} : i ^*(\delta  \boxtimes \overline {\delta_r})^2
\rightarrow \dvol T^*
\La \otimes \C $$
The Lie derivative $ \dLie_X^{\delta \boxtimes \overline{\delta_r}} $
is then defined as the first order differential operator such that for
any section $s$ of $i ^*(\delta \boxtimes \overline{\delta_r}) \rightarrow
\La$, one has
$$ \dLie_X \varphi_{\La} (s^2) = 2 \varphi_{\La} ( s \otimes
\dLie_X^{\delta \boxtimes \overline{\delta_r}}  s ) $$
Actually we already considered such a Lie derivative in remark
\ref{rem:symbol}. 

Let us apply this to the operator $O_k = S_k \otimes \id - \id \otimes
\overline{T^*_k}$.  Its principal symbol is $e_0 = p^* f_0 - p_r^* g_0
$. Assume that $f_0$ is $G$-invariant and that
$\pi ^* g_0 = j ^* f_0$. Then the restriction of $e_0$ to $\La$
vanishes. One has to compute the sum (\ref{eq:ss}). First,
$$ e_1 (x,z) = f_1 (z) - g_1 (x), \qquad   \forall (x, z) \in \La .$$
Second, identifying the momentum Lagrangian $\La$ with the zero level
set $P$, the
restriction $X$ of the Hamiltonian vector field of $e_0$ to $\La$ becomes
the restriction $Y$ of the
Hamiltonian vector field of 
$f_0$ to $P$. Then we will prove that 
\begin{gather} \label{eq:dl}
 (\dLie_X ^{\delta \boxtimes \overline{\delta}} \si)(x,z)  =
(\dLie_Y h ) (x)  . t_{\delta}(x), \qquad \forall (x, z) \in \La
\end{gather}
and theorem \ref{sec:sous-princ} follows directly. To show (\ref{eq:dl}), we compute $\varphi_\La ( t_{\delta}^2 )$.

\begin{lemma}
By identifying the momentum Lagrangian $\La$ with $P$,  $ \varphi_\La (
t^2_{\delta} )$ is the volume element of $P$ such that  
$$ \iota ({\gamma_\R}) \varphi_\La (
t^2_{\delta} ) = i^{n_r (n_r-2)} \pi ^* \om_r ^{\wedge n_r}$$
where  $n_r $ is the complex dimension of $M_r$, $\gamma_{\R}$ is the multivector $\xi_1^\# \wedge \ldots \wedge
\xi_\ell ^\#$ with $(\xi_i)$ an orthogonal base of $\Lie$.
\end{lemma}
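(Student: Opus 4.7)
The strategy is to unfold the definitions and reduce the identity to the compatibility formula (\ref{eq:8}) between $\varphi$ and $\varphi_r$, combined with the standard Hermitian normalization of unit $(n_r,0)$-forms from section \ref{sec:pairing}.

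At a point $x \in P$, I would choose $u \in \delta_x$ so that $[u] \in \delta_{r,\pi(x)}$ is unitary (this is consistent by the rescaling (\ref{eq:6})). Unpacking $\varphi_\La$ and $t_\delta$, and using the identification $\La \cong P$ via $x \mapsto (x,\pi(x))$, the map $i$ sends $v \in T_xP$ to $(v,d\pi v)$, which gives
\[
\varphi_\La(t_\delta^2)(x) \;=\; \bigl( \varphi(u^2) |_{T_x P} \bigr) \wedge \overline{\pi^* \varphi_r([u]^2)} .
\]
Since $d\pi$ kills every vector tangent to a $G$-orbit, the factor $\overline{\pi^*\varphi_r([u]^2)}$ is annihilated by any such vector; hence for $\gamma_\R = \xi_1^\# \wedge \ldots \wedge \xi_\ell^\#$ the Leibniz rule for interior products gives
\[
\iota(\gamma_\R) \varphi_\La(t_\delta^2) \;=\; \bigl( \iota(\gamma_\R) \varphi(u^2) \bigr)|_{T_x P} \wedge \overline{\pi^* \varphi_r([u]^2)} .
\]

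The key point is that $\iota(\gamma_\R)$ agrees with $\iota(\gamma)$ when applied to the $(n,0)$-form $\varphi(u^2)$. Indeed, interior product by a real vector $v$ on a pure $(p,0)$-form equals interior product by its $(1,0)$-component $\tfrac{1}{2}(v - iJv)$ (the $(0,1)$-part contracts trivially), and the result is again of pure type $(p{-}1,0)$; iterating this over the $\ell$ factors of $\gamma_\R$ and comparing with the definition (\ref{eq:7}) of $\gamma$ yields $\iota(\gamma_\R) \varphi(u^2) = \iota(\gamma)\varphi(u^2)$. By (\ref{eq:8}) this equals $\pi_s^*\varphi_r([u]^2)$, and restricting to $T_xP$ gives $\bigl(\iota(\gamma_\R) \varphi(u^2)\bigr)|_{T_xP} = \pi^*\varphi_r([u]^2)$. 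Substituting,
\[
\iota(\gamma_\R) \varphi_\La(t_\delta^2) \;=\; \pi^*\varphi_r([u]^2) \wedge \overline{\pi^*\varphi_r([u]^2)} .
\]

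To finish, I would invoke the Hermitian formula of section \ref{sec:pairing}: for any $\alpha \in \wedge^{n_r,0}T^*M_r$, one has $\alpha \wedge \overline{\alpha} = i^{n_r(n_r-2)} (\alpha,\alpha)\, \om_r^{\wedge n_r}$. Applied to $\alpha = \varphi_r([u]^2)$, which has unit norm because $\varphi_r$ is an isometry and $\|[u]\|=1$, this gives $\varphi_r([u]^2) \wedge \overline{\varphi_r([u]^2)} = i^{n_r(n_r-2)} \om_r^{\wedge n_r}$; pulling back by $\pi$ yields the claimed identity, and nonvanishing of the right-hand side shows that $\varphi_\La(t_\delta^2)$ is indeed a volume form on $P$. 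The main obstacle is the type-decomposition argument in the third paragraph showing $\iota(\gamma_\R)\varphi(u^2) = \iota(\gamma)\varphi(u^2)$; once that is cleanly established, everything else is a short chase through the definitions.
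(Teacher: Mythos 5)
Your proof is correct and follows the same route as the paper's: choose $u$ with $[u]$ unitary, write $\varphi_\La(t_\delta^2) = j^*\varphi(u^2)\wedge\pi^*\overline{\varphi_r([u]^2)}$, move $\iota(\gamma_\R)$ onto the first factor using that $d\pi$ annihilates orbit directions, replace $\gamma_\R$ by $\gamma$ via the $(p,0)$-type argument, apply \eqref{eq:8} and $\pi=\pi_s\circ j$, and finish with the unit-norm normalization $\alpha\wedge\overline{\alpha}=i^{n_r(n_r-2)}\om_r^{\wedge n_r}$. You have merely filled in the one-line justification the paper gives for $\iota(\gamma_\R)\varphi(u^2)=\iota(\gamma)\varphi(u^2)$, which is accurate but not a different argument.
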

From this we deduce that the Lie derivative with respect to $X$ of  $ \varphi_\La (
t^2_{\delta} )$ vanishes and consequently the same holds for the Lie
derivative of $ t_{\delta}$,
which proves (\ref{eq:dl}). 
\begin{proof}
By definition of $t_\delta$, if $x$ is a point of $P$ and $u \in
\delta_x$ is such that the norm of $[u]$ is equal to 1, then  
$$  t_{\delta} (x, \pi (x)) = u \otimes [\overline{ u}] $$
By identifying the momentum Lagrangian with $P$, one has 
$$ \varphi_\La (
t^2_{\delta} ) = j^ * \varphi (u ^2) \wedge \pi ^ * \overline{\varphi_r ([u]^2)}
$$
consequently,
$$ \iota ({\gamma_\R}) \varphi_\La (
t^2_{\delta} ) = j^ *  (\iota ({\gamma_\R}) \varphi (u ^2) )\wedge \pi
^ * \overline {\varphi_r ([u]^2)}
$$
Since $\varphi (u ^2) \in \dcan T^*M$, we have
$$ \iota ({\gamma_\R}) \varphi (u ^2) = \iota ({\gamma}) \varphi (u ^2) $$
where $\gamma$ is the multivector defined in (\ref{eq:7}). Next by
definition (\ref{eq:8}) of $\varphi_r$, we obtain that  
$$ \iota ({\gamma_\R}) \varphi (u ^2) = \pi_s^* \varphi_r ( [ u ] ^2) 
$$
Since $\pi = \pi_s \circ j$, it follows that
$$ j^* (  \iota ({\gamma_\R}) \varphi (u ^2) ) = \pi ^* \varphi_r ( [
u ] ^2)$$
Hence, 
$$ \iota ({\gamma_\R}) \varphi_\La (
t^2_{\delta} ) = \pi ^* \bigl( \varphi_r ( [ u ] ^2) \wedge \overline{\varphi_r
([u]^2 )} \bigr)
$$
Because $[u]$ is unitary, $\varphi_r ( [ u ] ^2 )$ is also of norm $1$,
we conclude.
\end{proof}

\bibliography{biblio}
\end{document}